\documentclass[a4paper,reqno,11pt]{amsart}
\usepackage{graphicx}
\usepackage{amsmath,amsxtra,amssymb,latexsym, amscd,amsthm}
\usepackage[mathscr]{eucal}
\usepackage{color}
\usepackage{enumerate}
\usepackage[colorlinks,pdfdisplaydoctitle,linkcolor=blue,citecolor=red]{hyperref}
\parskip4pt
\theoremstyle{plain}
\newtheorem{theorem}{Theorem}[section]
\newtheorem{assum}{Assumption}[section]
\newtheorem{rem}{Remark}[section]
\newtheorem{lemma}{Lemma}[section]

\newtheorem{corollary}{Corollary}[section]
\theoremstyle{definition}

\numberwithin{equation}{section}

	\def\E{\mathbb{E}}
	\def\M{\mathrm{M}}

\begin{document}
\title[Rate of convergence for mean-field SDEs]{Rate of convergence  in the Smoluchowski-Kramers approximation for mean-field stochastic differential equations}

\begin{abstract}
In this paper we study a second-order mean-field stochastic differential systems describing the movement of a particle under the influence of a time-dependent force, a friction, a mean-field interaction and a space and time-dependent stochastic noise. Using techniques from Malliavin calculus, we establish explicit rates of convergence in the zero-mass limit (Smoluchowski-Kramers approximation) in the $L^p$-distances and in the total variation distance for the position process, the velocity process and a re-scaled velocity process to their corresponding limiting processes.  
\end{abstract}
		
\author[T. C. Son]{Ta Cong Son}
\address[T. C. Son]{University of Science, Vietnam National University, Hanoi}
\email{congson82@gmail.com}

\author[D. Q. Le]{Dung Quang Le}
\address[D. Q. Le]{École Polytechnique, France}
\email{quangdung0110@gmail.com}

\author[M. H. Duong]{Manh Hong Duong}
\address[M. H. Duong]{University of Birmingham, UK.} 
\email{H.Duong@bham.ac.uk.}

\keywords{Smoluchowski-Kramers approximation, Stochastic differential by mean-field, Total variation  distance, Malliavin calculus}
\subjclass[2010]{60G22, 60H07, 91G30}			
\maketitle

	\section{Introduction}
	
In this paper, we are interested in the following second-order mean-field stochastic differential equations
\begin{equation}\label{eq1}
		\begin{cases}dX^\alpha_t=Y^\alpha_t\, dt,\\
			\frac{1}{\alpha}dY^\alpha_t=[-\kappa Y^\alpha_t-g(t,X^\alpha_t)-\gamma(Y^\alpha_t-\E(Y^\alpha_t))]\,dt+\sigma(t,X^\alpha_t)\,dW_t,\\
			X^\alpha_0=x_0,Y^\alpha_0=y_0.
		\end{cases}
	\end{equation}
Here $\alpha, \gamma$ and $\kappa$ are positive constants, $g(t,x):[0,T]\rightarrow\mathbb{R}$ is a given function, $x_0,y_0 \in\mathbb{R}$ are given points in the real line, and $(W_t)_{t\geq 0}$ is the standard one-dimensional Wiener process. The notation $\E$ denotes the expectation with respect to the probability measure of the underlying probability space in which the Wiener process is defined.

System \eqref{eq1} describes the movement of a particle at position (displacement) $X_t^\alpha\in\mathbb{R}$ and with velocity $Y_t^\alpha\in\mathbb{R}$, at time $t$, under the influence of four different forces: an external, possibly time-dependent and non-potential, force  $-g(t,X^\alpha_t)$; a friction $-\kappa Y^\alpha_t$; a (McKean-Vlasov type) mean-field interaction force $-\gamma(Y^\alpha_t-\E(Y^\alpha_t))$ (noting that here the mean-field term is acting on the velocity rather than the position) and a stochastic noise $\sigma(t,X^\alpha_t)\dot{W}_t$. Physically, $\alpha$ is the inverse of the mass, $\kappa$ is the friction coefficient and $\gamma$ is the strength of the interaction.  We use the superscript $\alpha$ in \eqref{eq1} to emphasize the dependence on $\alpha$ since in the subsequent analysis we are concerned with the asymptotic behaviour of \eqref{eq1} as $\alpha$ tends to $+\infty$.

Under Assumptions \ref{gt:2.1} (see below) of this paper, system \eqref{eq1} can also be obtained as the mean-field (hydrodynamic) limit of the following interacting particle system as $N$ tends to $+\infty$
 \begin{equation}\label{eq1n}
		\begin{cases}dX^{\alpha,i}_t=Y^{\alpha,i}_t\, dt,\\
			dY^{\alpha,i}_t=[-\alpha\kappa Y^{\alpha,i}_t-\alpha g(t,X^{\alpha,i}_t)-\frac{\alpha\gamma}{N}\sum_{j=1}^N(Y^{\alpha,i}_t-Y^{\alpha,j}_t)]\,dt+\alpha \sigma(t,X^{\alpha,i}_t)\,dW^i_t,\\
			X^\alpha_0=x_0,Y^\alpha_0=y_0,
		\end{cases}
	\end{equation}
where $\{W^i\}_{i=1}^N$ are independent one-dimensional Wiener processes. In fact, under Assumptions \ref{gt:2.1} the above interacting system satisfies the property of propagation of chaos, that is as $N$ tends to infinity, it behaves more and more like a system of independent particles, in which each particle evolves according to \eqref{eq1} where the interaction term in \eqref{eq1n} is replaced by the expectation one. For a detailed account on the propagation of chaos phenomenon, we refer the reader to classical papers \cite{kac1956,Sznitman1991} and more recent papers~\cite{BolleyGuillinMalrieu2010,Duong2015NA,Jabin2017} and references therein for degenerate diffusion systems like \eqref{eq1}. The interacting particle system \eqref{eq1n} and its mean-field limit \eqref{eq1} and more broadly systems of these types have been used extensively in biology, chemistry and statistical physics for the modelling of molecular dynamics, chemical reactions, flockings, social interactions, just to name a few, see for instance, the monographs~\cite{risken1996fokker,pavliotis2014stochastic}.
 
In this paper, we are interested in the zero-mass limit (as also known as the Smoluchowski-Kramers approximation) of \eqref{eq1}, that is its asymptotic behaviour as $\alpha$ tends to $+\infty$. By employing techniques from Malliavin calculus, we obtain explicitly rate of convergences, in $L^p$-distances and in total variation distances, for both the position and velocity processes. 
\subsection{Main results}
Before stating our main results, we make the following assumptions.
\begin{assum}\label{gt:2.1}
\begin{itemize}\
			\item[$(A)$] The coefficients $g,\sigma:[0,T]\times \mathbb{R}\longrightarrow \mathbb{R}$ have linear growth, i.e. there exists $K>0$ such that
			$$|g(t,x)|+|\sigma(t,x)|\leq K (1+|x|)\quad \forall x\in \mathbb{R}, t\in[0,T].$$
			\item[$(B)$]  The coefficients $g,\sigma:[0,T]\times \mathbb{R}\longrightarrow \mathbb{R}$ are Lipschitz, i.e. there exists $L>0$ such that
			$$ |g(t,x)-g(t,y)|+|\sigma(t,x)-\sigma(t,y)| \leq L|x-y| \quad \forall x, y\in \mathbb{R}, t\in[0,T].$$
\end{itemize}
\end{assum}
\begin{assum}\label{gt:2.2}
		$g(t,x),\sigma(t,x)$ are twice differentiable in $x$ and the derivatives are  bounded by some constant $\M>0$. 
\end{assum}
Let $F, G$ be random variables, we denote by $d_{TV}(F,G)$ the total variation distance between the laws of $F$ and $G$, that is,
\begin{align*}
	d_{TV}(F,G)&=\sup\limits_{A\in \mathcal{B}(\mathbb{R})}|P(F\in A)-P(G\in A)|\\&=\dfrac{1}{2}\sup\{|\phi(F)-\phi(G)|:\phi: \mathbb{R}\to \mathbb{R} \  \mbox{which is bounded by} \ 1\}.
\end{align*}
Consider the following first-order stochastic differential equation, which will be the limiting system for the displacement process
\begin{equation}\label{eqn:1.3}
(\kappa+\gamma)dX_t=-\Big[g(t,X_t)-\dfrac{\gamma}{\kappa}\E[g(t,X_t)]\Big]\,dt+\sigma(t,X_t)dW_t,\quad  X_0=x_0\in \mathbb{R}.
\end{equation}
Our first main result provides an explicit rates of convergence for the displacement process.
\begin{theorem}[Quantitative rates of convergence of the displacement process]
\label{thm: Thm1}
Under Assumptions \ref{gt:2.1} and \ref{gt:2.2}, systems \eqref{eq1} and \eqref{eqn:1.3} have unique solutions and the following statements hold.
\begin{enumerate}[(1)]
\item (rate of convergence in $L^p$-distances) For all $p\geq 2$, $\alpha\geq 1$ and $t\in[0,T]$,
\begin{equation*}
\E \Big[\sup_{0\leq s\leq t}|X^{\alpha}_s-X_s|^{p}\Big] \leq C \left[\left( \lambda(t,\alpha(\kappa+\gamma))\right)^{\frac{p}{2}}+\left( \lambda(t,\alpha\kappa)\right)^p \right],
		\end{equation*}
		where $\lambda(t,a)=(1/a)[1-\exp(-at)]$ for $t, a>0$ and $C$ is a positive constant depending on $\{x_0,y_0,\kappa,\gamma,K,L,p,T\}$  but not on $\alpha$ and $t$.
\item (rate of convergence in the total variation distance). We further assume that $|\sigma(t,x)|\geq \sigma_0>0$ for all $(t,x)\in [0,T]\times \mathbb{R}.$  Then, for each $\alpha\geq 1$ and $t\in (0,T],$ 
		\begin{equation*}
			d_{TV}(X^{\alpha}_t,X_t) \leq C\sqrt{t^{-1}(\lambda(t,\alpha(\kappa+\gamma))+\left( \lambda(t,\alpha\kappa)\right)^2)},
		\end{equation*}
		where $C$ is a constant depending only on $\{x_0,y_0,\sigma_0, \kappa,\gamma,K,L,\,M,T\}$ but not on $\alpha$ and $t$.
	As a corollary, if $|\sigma(t,x)|\geq \sigma_0>0$ for all $(t,x)\in[0,T]\times \mathbb{R}$ then we have.
	\begin{equation*}
	d_{TV}(X^{\alpha}_t,X_t) \leq  Ct^{-1/2}\alpha^{-1/2}.
	\end{equation*}
\end{enumerate}
\end{theorem}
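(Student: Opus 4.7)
\smallskip
\noindent\emph{Plan for (1).} I would first derive an explicit integral representation for $Y^\alpha_t$ by exploiting the linearity of its SDE in $Y$. Taking expectations eliminates the mean-field term and produces a closed linear ODE for $m_t := \E[Y^\alpha_t]$, whose solution via an integrating factor is
\begin{equation*}
m_t = y_0 e^{-\alpha\kappa t} - \alpha\int_0^t e^{-\alpha\kappa(t-s)}\E[g(s,X^\alpha_s)]\,ds.
\end{equation*}
The centered process $\hat Y^\alpha_t := Y^\alpha_t - m_t$ then satisfies a linear SDE with drift rate $\alpha(\kappa+\gamma)$ driven by the centered source $g-\E[g]$ and by $\sigma\,dW$; variation of constants yields a parallel explicit formula. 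Plugging $Y^\alpha_u = m_u + \hat Y^\alpha_u$ into $X^\alpha_t = x_0 + \int_0^t Y^\alpha_u\,du$ and swapping the order of integration via deterministic/stochastic Fubini converts each exponential kernel into a factor $(1-e^{-\alpha a(t-s)})/a$. The ``$1$''-part reconstructs, after algebra, the drift and diffusion of \eqref{eqn:1.3} evaluated at $X^\alpha$, and the $e^{-\alpha a(t-s)}$-part constitutes the residual.

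\smallskip
\noindent\emph{Closing (1).} Subtracting the integrated form of \eqref{eqn:1.3} yields an equation for $X^\alpha_t - X_t$ of the schematic shape (exponentially damped residuals) + (integrals of Lipschitz differences in $g$ and $\sigma$). The deterministic residuals are bounded in $L^p$ by $\lambda(t,\alpha\kappa)$ or $\lambda(t,\alpha(\kappa+\gamma))$, since $\int_0^t e^{-\alpha a(t-s)}\,ds = \lambda(t,\alpha a)$; the stochastic residual is controlled by Burkholder--Davis--Gundy, whose $L^2$-in-time norm produces the power $p/2$. Combining with Assumption \ref{gt:2.1}(B) and Doob's maximal inequality, a Gronwall argument closes the estimate and produces the stated bound.

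\smallskip
\noindent\emph{Plan for (2).} I would promote the $L^2$-rate from (1) to a total-variation rate through Malliavin calculus. Under $|\sigma|\geq\sigma_0$ and Assumption \ref{gt:2.2}, one verifies that $X_t\in\mathbb{D}^{2,p}$ for every $p\geq 2$ and computes its Malliavin derivative directly from \eqref{eqn:1.3}; the ellipticity bound then yields $\|D_\cdot X_t\|_{L^2[0,t]}^{2} \gtrsim \sigma_0^2\,t$. Applying a Bally--Caramellino / Nourdin--Peccati-type integration-by-parts inequality of the shape
\begin{equation*}
d_{TV}(F,G)\leq C\,\|F-G\|_{L^2(\Omega)}\cdot\bigl(\text{Malliavin weight depending only on }G\bigr)
\end{equation*}
with $F = X^\alpha_t$, $G = X_t$ produces a prefactor of order $t^{-1/2}$ (coming from $1/\|D_\cdot X_t\|^2$); inserting the $L^2$-bound from (1) delivers the claim, and the corollary follows from $\lambda(t,\alpha\kappa),\lambda(t,\alpha(\kappa+\gamma))\leq 1/(\alpha\kappa)$.

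\smallskip
\noindent\emph{Main obstacle.} Part (1) is lengthy but essentially a bookkeeping exercise around the Fubini swap and the BDG bound. The substantive difficulty lies in part (2): establishing enough Malliavin--Sobolev regularity and quantitative nondegeneracy of $X_t$ in the presence of the McKean--Vlasov coupling (which forces one to differentiate through $\E[g(s,X_s)]$), and selecting the precise form of the IBP inequality that returns exactly the advertised $t^{-1/2}\alpha^{-1/2}$ rate rather than the weaker $\alpha^{-1/4}$ that one would get by naively smoothing the $L^2$ bound against a bounded test function.
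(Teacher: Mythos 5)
Your plan for part (1) matches the paper's proof in all essentials: variation of constants for the velocity equation (the paper solves for $Y^\alpha$ with integrating factor $e^{\alpha(\kappa+\gamma)t}$ and for $n^\alpha(t)=\E[Y^\alpha_t]$ with integrating factor $e^{\alpha\kappa t}$, which is your mean/centered split in disguise), a Fubini swap producing exponentially damped residuals $I_0^\alpha,\dots,I_4^\alpha$, the identity $\int_0^t e^{-\alpha a(t-s)}\,ds=\lambda(t,\alpha a)$, the BDG inequality for the stochastic residual giving the exponent $p/2$, and a Gronwall closure. That part is sound in outline.

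Part (2) contains a genuine gap. The integration-by-parts inequality you invoke, of the shape $d_{TV}(F,G)\le C\,\|F-G\|_{L^2(\Omega)}\cdot(\text{weight depending only on }G)$, is false as stated: total variation is not controlled by the $L^2$ distance together with nondegeneracy of $G$ alone (take $F$ to be $G$ rounded to a grid of mesh $\varepsilon$; then $\|F-G\|_{L^2}\le\varepsilon$ while $d_{TV}(F,G)=1$). The inequality the paper actually uses (Lemma \ref{bd:2.1}, imported from \cite{Son2020}) carries $\|F-G\|_{1,2}$ on the right-hand side, i.e.\ the $\mathbb{D}^{1,2}$-norm, which includes $\E\|DF-DG\|^2_{\mathcal{H}}$. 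Consequently an entire additional estimate is required --- the paper's Lemma \ref{bd:2.7}, which shows $\E\|DX^\alpha_t-DX_t\|^2_{\mathcal{H}}\le C\bigl(\lambda(t,\alpha(\kappa+\gamma))+(\lambda(t,\alpha\kappa))^2\bigr)$ --- and this is precisely where Assumption \ref{gt:2.2} enters, since one must differentiate the equations satisfied by $DX^\alpha$ and $DX$ and compare the coefficients $g_2'(s,X^\alpha_s)$ versus $g_2'(s,X_s)$, which needs second derivatives of $g$ and $\sigma$. Your proposal omits this step entirely, and without it the advertised rate cannot be extracted from the quoted type of inequality. Separately, the obstacle you flag --- differentiating through $\E[g(s,X_s)]$ --- is not an obstacle: that term is deterministic, so its Malliavin derivative vanishes and the mean-field coupling contributes nothing to $DX_t$ (cf.\ \eqref{eqn:2.4m}). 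The genuinely delicate points you do not address are the $\mathbb{D}^{1,2}$-rate just described and the negative moments $\E\|DX_t\|^{-8}_{\mathcal{H}}$, which the paper controls via an explicit exponential formula for $D_\theta X_t$, the Dubins--Schwarz theorem and Fernique's theorem, yielding the $t^{-1/2}$ prefactor you correctly anticipate.
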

Theorem \ref{thm: Thm1} combines Theorem \ref{bd:2.5} (for the $L^p$-distances) and Theorem \ref{dl:2.1} (for the total variation distance) in Section \ref{sec: displacement process}.

We are also interested in the asymptotic behavior, when $ \alpha \to \infty$, of the velocity process $Y^\alpha_t$ of \eqref{eq1} and of a re-scaled velocity process, $\tilde{Y}^{\alpha}_t$, which is defined by 
$$
\tilde{Y}^{\alpha}_t:=\frac{1}{\sqrt{\alpha}}Y^\alpha_{t/\alpha}.
$$ 
The re-scaled process $\tilde{Y}^{\alpha}_t$ satisfies the following stochastic differential equation 
	\begin{equation}
 \label{eqn:2.10m10}
 \begin{cases}
 \tilde{Y}^{\alpha}_t =\dfrac{y_0}{\sqrt{\alpha}}-(\kappa+\gamma)\int_0^t\tilde{Y}^{\alpha}_sds- \dfrac{1}{\sqrt{\alpha}}\int_0^tg(\frac{s}{\alpha},X^\alpha_{\frac{s}{\alpha}})ds-\gamma \int_0^t\E(\tilde{Y}^{\alpha}_{s})ds+ \int_0^t\sigma(\frac{s}{\alpha},X^\alpha_{\frac{s}{\alpha}})d\tilde{W}_s\\
 X^\alpha_0 = x_0,
 \end{cases}
 \end{equation}
where $\tilde{W}_t:=\sqrt{\alpha}W_{t/\alpha}$ is a rescaled Brownian process.

Now we consider the following stochastic differential equation, which will be the limiting process of the rescaled velocity process
  \begin{equation}
 \label{eqn:2.14m0}
 \begin{cases}
 d\tilde{Y}_t = -(\kappa+\gamma)d\tilde{Y}_t+\sigma(0,x_0)d\tilde{W}_t,\\
 \tilde{Y}(0) = 0.
 \end{cases}
 \end{equation}
We now describe our result for the rescaled velocity process first since for this process we also work with a general setting where both $g$ and $\sigma$ can depend on both spatial and temporal variables. We only assume additionally the following condition.
\begin{assum}\label{gt:2.3}
  		$$ |\sigma(t,x)-\sigma(s,y)| \leq L(|t-s|+|x-y|) \quad \forall x, y\in \mathbb{R}, t,s\in[0,T].$$
  \end{assum}
  
In the next theorem, we provide explicit rates of convergence, both in $L^p$-distances and in the total variation distance, for the rescaled velocity process. 
\begin{theorem}[Quantitative rates of convergence for the rescaled velocity processes]
\label{thm: Thm21}
Under Assumptions \ref{gt:2.1} and \ref{gt:2.3} the following hold.
\begin{enumerate}[(1)]
\item (rate of convergence in $L^p$-distance for the rescaled velocity process) For all $p \geq 2$ and $\alpha\geq 1$,
\begin{equation*}
\E\left[	\sup_{0\leq t\leq T}|\tilde{Y}^\alpha_t-\tilde{Y}_t|^{p}\right] \leq \dfrac{C}{\alpha^{p/2}},
	\end{equation*}
	where $C$ is a positive constant depending on $p$ and other parameters but not on $\alpha$.
\item (rate of convergence in the total variation distance for the rescaled velocity process) Assume that Assumptions \ref{gt:2.1} and \ref{gt:2.3} hold and $|\sigma(0,x_0)|>0$ for all $(t,x)\in [0,T]\times \mathbb{R}.$ Then, for each $\alpha\geq 1$ and $t\in (0,T],$ 
	\begin{equation*}
	d_{TV}(\tilde{Y}^{\alpha}_t,\tilde{Y}_t) \leq C\left(\lambda(t,2(\kappa+\gamma))\right)^{-1/2}\alpha^{-1/2},
	\end{equation*}
	where $C$ is a constant depending only on $\{x_0,y_0,\kappa,\gamma,K,L,p,T,\sigma(0,x_0)\}$.
\end{enumerate}
\end{theorem}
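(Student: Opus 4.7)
The plan is to prove the two parts sequentially, using the $L^p$-rate in (1) as input for (2) via a Malliavin comparison argument.

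\textbf{Part (1).} I would first rescale the displacement onto the same slow time scale as $\tilde{Y}^\alpha$: setting $\tilde{X}^\alpha_t := X^\alpha_{t/\alpha}$, a change of variables gives $\tilde{X}^\alpha_t = x_0 + \alpha^{-1/2}\int_0^t \tilde{Y}^\alpha_s\,ds$. A standard BDG+Gronwall argument applied to the coupled SDE for $(\tilde{X}^\alpha, \tilde{Y}^\alpha)$, whose coefficients have linear growth uniformly in $\alpha$, yields for every $p\geq 2$
\begin{equation*}
\sup_{\alpha\geq 1}\E\Big[\sup_{0\leq t\leq T}|\tilde{Y}^\alpha_t|^p\Big] < \infty \quad\text{and}\quad \E\Big[\sup_{0\leq t\leq T}|\tilde{X}^\alpha_t - x_0|^p\Big]\leq C\alpha^{-p/2}.
\end{equation*}
Setting $Z^\alpha_t := \tilde{Y}^\alpha_t - \tilde{Y}_t$ and subtracting \eqref{eqn:2.14m0} from \eqref{eqn:2.10m10}, the initial contribution $y_0/\sqrt{\alpha}$ and the deterministic integral $\alpha^{-1/2}\int_0^t g(s/\alpha, \tilde{X}^\alpha_s)\,ds$ are both $O(\alpha^{-1/2})$ in $L^p$ by the linear growth of $g$ and the moment bound above. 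Taking expectation in \eqref{eqn:2.10m10} and applying Gronwall gives $|\E(\tilde{Y}^\alpha_s)|\leq C\alpha^{-1/2}$, so the mean-field term is also $O(\alpha^{-1/2})$. For the stochastic integral, Assumption \ref{gt:2.3} yields the pointwise bound $|\sigma(s/\alpha,\tilde{X}^\alpha_s)-\sigma(0,x_0)|\leq L(s/\alpha + |\tilde{X}^\alpha_s - x_0|)$, and BDG together with the displacement bound produces an $O(\alpha^{-p/2})$ contribution. A concluding Gronwall argument applied to $\E\sup_{s\leq t}|Z^\alpha_s|^p$ closes the estimate.

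\textbf{Part (2).} The limiting process $\tilde{Y}_t$ is an Ornstein--Uhlenbeck Gaussian with Malliavin derivative $D_s\tilde{Y}_t = \sigma(0,x_0) e^{-(\kappa+\gamma)(t-s)}\mathbf{1}_{[0,t]}(s)$, whose squared $L^2([0,T])$-norm equals precisely $\sigma(0,x_0)^2\,\lambda(t,2(\kappa+\gamma))$. I would then invoke the same Malliavin comparison lemma used to prove the total variation statement for the displacement (Theorem \ref{dl:2.1}): for $F$ in a suitable Malliavin Sobolev space and $G$ with a nondegenerate Malliavin covariance bounded below by $\sigma_G^2 > 0$, one has an inequality of the form $d_{TV}(F,G)\leq C\,\sigma_G^{-1}\|F-G\|_2$. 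Applying this with $F=\tilde{Y}^\alpha_t$, $G=\tilde{Y}_t$ and combining with the $L^2$-rate $\|\tilde{Y}^\alpha_t-\tilde{Y}_t\|_2\leq C\alpha^{-1/2}$ from Part (1) yields the claimed bound.

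\textbf{Main obstacle.} The most delicate point is verifying, uniformly in $\alpha$, the Malliavin regularity hypotheses needed by the comparison lemma. One has to Malliavin-differentiate the coupled rescaled SDE for $(\tilde{X}^\alpha,\tilde{Y}^\alpha)$ once (and, depending on the precise form of the lemma, also a second time), obtain the resulting linear SDEs for $D\tilde{Y}^\alpha$ (and $D^2\tilde{Y}^\alpha$), and produce $L^p$-moment bounds that remain uniform as $\alpha\to\infty$. The explicit scaling factors $1/\sqrt{\alpha}$ and the time-inhomogeneous coefficients $g(s/\alpha,\cdot),\sigma(s/\alpha,\cdot)$ have to be tracked carefully so that the final constant in the total variation bound depends only on the parameters listed in the statement.
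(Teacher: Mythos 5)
Your Part (1) follows essentially the paper's route (subtract the two integral equations, apply Minkowski/H\"older/BDG, use the linear growth of $g$, the bound $|\sigma(s/\alpha,X^\alpha_{s/\alpha})-\sigma(0,x_0)|\leq L(s/\alpha+|X^\alpha_{s/\alpha}-x_0|)$ together with $\E|X^\alpha_{s}-x_0|^p\leq Cs^{p/2}$ from Lemma \ref{bd:2.4}, and close with Gronwall); your separate estimate $|\E(\tilde{Y}^\alpha_s)|\leq C\alpha^{-1/2}$ is an unnecessary detour --- since $\E(\tilde{Y}_s)=0$ the mean-field term is simply absorbed into the Gronwall integrand via $|\E(\tilde{Y}^\alpha_s-\tilde{Y}_s)|\leq \E|\tilde{Y}^\alpha_s-\tilde{Y}_s|$ --- but this is harmless.

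Part (2) contains a genuine gap. The comparison lemma you invoke (Lemma \ref{bd:2.1}) does not have the form $d_{TV}(F,G)\leq C\,\sigma_G^{-1}\|F-G\|_{L^2}$: the prefactor is $\|F_1-F_2\|_{1,2}$, whose square is $\E|F_1-F_2|^2+\E\|DF_1-DF_2\|^2_{\mathcal{H}}$. (No bound of the form you state can hold: closeness in $L^2$ alone does not control total variation, since one of the laws could be singular.) Consequently the $L^2$-rate from Part (1) is not enough; you must also prove the quantitative estimate $\E\|D\tilde{Y}^\alpha_t-D\tilde{Y}_t\|^2_{\mathcal{H}}\leq C\alpha^{-1}$, which is the content of the paper's Lemma \ref{bd:2.13} and requires its own argument: write the linear equations satisfied by $D_r\tilde{Y}^\alpha_t$ and $D_r\tilde{Y}_t$, subtract, use Assumption \ref{gt:2.3} on $|\sigma(r,X^\alpha_r)-\sigma(0,x_0)|$ together with $\E|X^\alpha_r-x_0|^2\leq Cr$ and the boundedness of $\E|D_rX^\alpha_{s/\alpha}|^2$, and apply Gronwall to $\phi(t)=\int_0^{t/\alpha}\E|D_r\tilde{Y}^\alpha_t-D_r\tilde{Y}_t|^2\,dr$. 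Your ``main obstacle'' paragraph asks only for moment bounds on $D\tilde{Y}^\alpha$ that are \emph{uniform} in $\alpha$; that would give $\|\tilde{Y}^\alpha_t-\tilde{Y}_t\|_{1,2}=O(1)$ and hence no decay in $\alpha$ of the total variation bound. You do have the right choice of which variable plays the regular role: taking $F_1=\tilde{Y}_t$ (Gaussian) makes $D^2F_1=0$ and $\|DF_1\|^2_{\mathcal{H}}=\sigma^2(0,x_0)\lambda(t,2(\kappa+\gamma))$ deterministic, which is exactly how the factor $(\lambda(t,2(\kappa+\gamma)))^{-1/2}$ appears, and it also means no second-order Malliavin derivative of $\tilde{Y}^\alpha$ is ever needed.
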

Theorem \ref{thm: Thm21} summarizes Theorem \ref{bd:3.1} (for the $L^p$-distances) and Theorem \ref{thm: TV for velocity} (for the total variation distance) in Section \ref{sec: velocity process}.

When $g(t,x)=g(x)$ and $\sigma(t,x)=\delta$, \cite[Theorem 2.3]{narita3} shows that the velocity process $Y^\alpha_t$ converges to the normal distribution as $\alpha \to \infty$.  The third aim
of this paper is to generalize this result to a more general setting where $g$ depends on both $x$ and $t$ while $\sigma$ depends only on $t$, i.e. $\sigma(t,x)=\sigma(t)$, obtaining rates of convergence in the total variation distance. The following theorem is the content of Theorem \ref{thm: TV for velocity2} in Section \ref{sec: velocity process}.

\begin{theorem}[Quantitative rates of convergence for the velocity processes]
	\label{thm: Thm2}
	Under Assumptions \ref{gt:2.1}  the following hold.  Assume additionally that $\sigma(t)$ is continuously differentiable on $[0,T]$ and that $\sigma(t)\not=0$ for each $t \in (0, T]$. Let $N$ be a normal random variable with mean $0$ and variance $\dfrac{\sigma^2(t)}{2(\kappa+\gamma)}$,
 Then, for each $\alpha\geq 1$ and $t\in (0,T]$ 
		\begin{equation*}
		d_{TV}\left(\dfrac{Y^{\alpha}_t}{\sqrt{\alpha}},N\right) \leq C\left(\lambda(t,2(\kappa+\gamma))\right)^{-1/2}\alpha^{-1/2},
		\end{equation*}
		where $C>0$ is a constant not depending  on $\alpha$ and $t$. 
\end{theorem}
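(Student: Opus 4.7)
The plan is to combine an explicit variation-of-parameters representation of $Y^\alpha_t$ with a Malliavin-calculus perturbation argument of the same flavour as the one used in the proof of Theorem~\ref{thm: TV for velocity}. The essential observation is that the limiting variance $\sigma^2(t)/(2(\kappa+\gamma))$ reads off $\sigma$ \emph{at the evaluation time $t$} because the exponential kernel $e^{-\alpha(\kappa+\gamma)(t-s)}$ concentrates on a window of width $1/\alpha$ around $s=t$; this is precisely what makes the $C^1$-regularity of $\sigma$ the natural hypothesis.

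First I would use the integrating factor $e^{\alpha(\kappa+\gamma)t}$ to solve the SDE for $Y^\alpha$ and decompose
\begin{equation*}
\frac{Y^\alpha_t}{\sqrt{\alpha}} \;=\; A^\alpha(t) + B^\alpha(t) + G^\alpha(t),
\end{equation*}
where $A^\alpha(t)$ is deterministic and gathers the initial datum together with the $\E[Y^\alpha_s]$ and $\E[g(s,X^\alpha_s)]$ contributions,
\begin{equation*}
G^\alpha(t) \;:=\; \sqrt{\alpha}\int_0^t e^{-\alpha(\kappa+\gamma)(t-s)}\sigma(s)\,dW_s
\end{equation*}
is a centered Wiener integral, hence Gaussian with variance $v_\alpha^2(t):=\alpha\int_0^t e^{-2\alpha(\kappa+\gamma)(t-s)}\sigma^2(s)\,ds$, and
\begin{equation*}
B^\alpha(t) \;:=\; -\sqrt{\alpha}\int_0^t e^{-\alpha(\kappa+\gamma)(t-s)}\bigl(g(s,X^\alpha_s)-\E[g(s,X^\alpha_s)]\bigr)\,ds
\end{equation*}
is the non-Gaussian fluctuation produced by the drift. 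I would then establish four elementary estimates: (i) $\E|Y^\alpha_s|$ and $\E|g(s,X^\alpha_s)|$ are uniformly bounded in $\alpha\geq 1$ and $s\in[0,T]$ (from Assumption~\ref{gt:2.1} and a Gronwall argument), which gives $|A^\alpha(t)|\leq C\alpha^{-1/2}$; (ii) $\E[B^\alpha(t)^2]\leq C\alpha^{-1}$, by Cauchy--Schwarz in the time variable together with uniform second-moment bounds on $g(s,X^\alpha_s)$; (iii) $|v_\alpha^2(t)-\sigma^2(t)/(2(\kappa+\gamma))|\leq C\alpha^{-1}$, obtained after the substitution $u=2\alpha(\kappa+\gamma)(t-s)$ and a first-order Taylor expansion of $\sigma^2$ around $t$---this is where the $C^1$ hypothesis is used; and (iv) the lower bound $v_\alpha^2(t)\geq c\,\sigma^2(t)\,\lambda(t,2(\kappa+\gamma))$, valid for $\alpha\geq 1$ by the continuity of $\sigma$ and the assumption $\sigma(t)\neq 0$, which is what eventually produces the prefactor $\lambda(t,2(\kappa+\gamma))^{-1/2}$.

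Finally I would conclude by the triangle inequality
\begin{equation*}
d_{TV}\bigl(Y^\alpha_t/\sqrt{\alpha},N\bigr) \;\leq\; d_{TV}\bigl(Y^\alpha_t/\sqrt{\alpha},\,A^\alpha(t)+G^\alpha(t)\bigr) + d_{TV}\bigl(A^\alpha(t)+G^\alpha(t),\,N\bigr).
\end{equation*}
The second summand reduces to the total variation distance between two non-degenerate Gaussians and, via (i), (iii), (iv), is of order $\lambda(t,2(\kappa+\gamma))^{-1/2}\alpha^{-1/2}$. For the first summand I would invoke the Malliavin-calculus perturbation inequality already used in the proof of Theorem~\ref{thm: TV for velocity}, which converts the $L^2$-smallness (ii) of $B^\alpha$ into TV-smallness with denominator $v_\alpha(t)$, of the same order. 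The main obstacle is precisely this last step: $B^\alpha$ is neither independent of $G^\alpha$ nor a smooth function of it, so a purely probabilistic argument cannot translate (ii) into a TV bound. Malliavin calculus is indispensable here---specifically, one uses the $\mathbb{D}^{1,2}$-regularity of $g(s,X^\alpha_s)$ (controlled by standard linear-SDE bounds on the Malliavin derivatives $D_r X^\alpha_s$ and $D_r Y^\alpha_s$ under Assumption~\ref{gt:2.1}) together with the non-degeneracy of the Malliavin derivative of $G^\alpha$ provided by (iv) to smooth the density and pay only the factor $1/v_\alpha(t)$.
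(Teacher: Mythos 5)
Your proposal is correct and follows essentially the same route as the paper: isolate the Gaussian Wiener integral $\sqrt{\alpha}\int_0^t e^{-\alpha(\kappa+\gamma)(t-s)}\sigma(s)\,dW_s$ as the principal term, control the remaining drift and initial-datum contributions in $L^p$ and in the $\mathbb{D}^{1,2}$-norm, convert this to a total-variation bound via the Malliavin perturbation lemma (Lemma \ref{bd:2.1}) using the non-degeneracy $\|DW^\alpha(t)\|_{\mathcal{H}}^2\gtrsim \lambda(t,2(\kappa+\gamma))$, and then compare the Gaussian to $N$ through the $O(\alpha^{-1})$ variance discrepancy coming from the $C^1$-regularity of $\sigma$. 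The only cosmetic difference is that you split off the deterministic part $A^\alpha(t)$ and absorb it into the Gaussian-to-Gaussian comparison, whereas the paper keeps it inside the remainder handled by the Malliavin lemma; both give the stated rate.
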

Theorem \ref{thm: Thm2} is Theorem \ref{thm: TV for velocity2}  in Section \ref{sec: velocity process}.
We emphasize that in the main theorems, to obtain the existence and uniqueness as well as the rate of convergence in $L^p$-distances we only use Assumptions \ref{gt:2.1}. Assumptions \ref{gt:2.2} and \ref{gt:2.3} are needed to employ techniques from Malliavin calculus, in particular to derive estimates for the Malliavin derivatives.
\begin{corollary}[Rate of convergence in Wasserstein distance for the laws of the displacement and velocity processes] Let $\mu$ and $\nu$ be two probability measures with finite second moments, then the $p$-Wasserstein distance, $W_p(\mu,\nu)$, between them can be defined by
$$
W_p(\mu,\nu)=\Big(\inf\{\E\big[|X-Y|^p\big]: X\sim \mu, Y\sim \nu\}\Big)^{1/p}.
$$
Using this formulation, as a direct consequence of our main results, we also obtain explicit rates of convergence in $p$-Wasserstein distances for the laws of the displacement and the rescaled velocity processes to the corresponding limiting ones
\begin{align*}
&\sup_{t\in[0,T]}W_p^p(\mathrm{law}(X^\alpha_t),\mathrm{law}(X_t))  \leq \frac{C}{\alpha^{p/2}},
\\&\sup_{t\in[0,T]}W_p^p(\mathrm{law}(\tilde{Y}^\alpha_t),\mathrm{law}(\tilde{Y}_t)) \leq \frac{C}{\alpha^{p/2}}.
\end{align*}
\end{corollary}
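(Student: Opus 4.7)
The plan is to derive the corollary as a nearly immediate consequence of the $L^p$-estimates in Theorem \ref{thm: Thm1}(1) and Theorem \ref{thm: Thm21}(1), by exploiting the natural synchronous coupling already present in the construction: $X^\alpha_t$ and $X_t$ (respectively $\tilde Y^\alpha_t$ and $\tilde Y_t$) are both defined on the same probability space and driven by the same Brownian motion. Since the Wasserstein distance is defined as an infimum over couplings of marginals, evaluating the expectation on this particular coupling yields an upper bound, namely
\begin{equation*}
W_p^p(\mathrm{law}(X^\alpha_t),\mathrm{law}(X_t)) \leq \E\bigl[|X^\alpha_t-X_t|^p\bigr] \leq \E\Bigl[\sup_{0\leq s\leq t}|X^\alpha_s-X_s|^p\Bigr],
\end{equation*}
and likewise for the rescaled velocity.

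The second step is to reduce the function $\lambda(t,a)=(1-e^{-at})/a$ appearing on the right-hand side of Theorem \ref{thm: Thm1}(1) to a pure power of $\alpha$. Using the elementary bound $\lambda(t,a)\leq 1/a$ uniformly in $t\geq 0$, one gets
\begin{equation*}
\bigl(\lambda(t,\alpha(\kappa+\gamma))\bigr)^{p/2}\leq \frac{1}{(\alpha(\kappa+\gamma))^{p/2}}, \qquad \bigl(\lambda(t,\alpha\kappa)\bigr)^p\leq \frac{1}{(\alpha\kappa)^p}\leq \frac{1}{(\alpha\kappa)^{p/2}},
\end{equation*}
where in the last inequality we use $\alpha\geq 1$ (so $\alpha^{-p}\leq \alpha^{-p/2}$). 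Combining these with Theorem \ref{thm: Thm1}(1) and taking the supremum over $t\in[0,T]$ yields the first stated inequality, with a constant $C$ absorbing $\kappa,\gamma,p$ and the constants from Theorem \ref{thm: Thm1}. For the rescaled velocity, the bound $\E[\sup_{t\in[0,T]}|\tilde Y^\alpha_t-\tilde Y_t|^p]\leq C/\alpha^{p/2}$ from Theorem \ref{thm: Thm21}(1) is already in the desired form, so one simply feeds it into the coupling inequality above.

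There is essentially no obstacle here; the corollary is a packaging statement. The only small care needed is (i) verifying that the synchronous coupling is admissible, i.e.\ that the marginals of $(X^\alpha_t,X_t)$ on the product space are indeed $\mathrm{law}(X^\alpha_t)$ and $\mathrm{law}(X_t)$, which is immediate from their joint construction on a common probability space, and (ii) checking that the $t$-supremum on the left can be extracted from the uniform-in-$t$ $L^p$ bound, which again follows because the right-hand sides of the cited $L^p$ estimates are already uniform in $t\in[0,T]$.
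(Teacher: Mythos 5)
Your proposal is correct and follows exactly the route the paper intends: the paper gives no separate proof of this corollary, treating it as an immediate consequence of the $L^p$ estimates in Theorems \ref{thm: Thm1}(1) and \ref{thm: Thm21}(1) via the synchronous coupling on the common probability space, together with the elementary bound $\lambda(t,a)\leq 1/a$ (which the paper itself invokes before Corollary \ref{cor:2.1}) and $\alpha\geq 1$. Nothing is missing.
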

\subsection{Comparison with existing literature and future work} 
The zero-mass limit of second order differential equations has been studied intensively in the literature. In the seminal work~\cite{Kramers1940}, Kramers formally discusses this problem, in the context of applications to chemical reactions, for the classical underdamped Langevin dynamics, which corresponds to~\eqref{eq1} with $g=-\nabla V$ (a gradient potential force), $\gamma= 0$ (no interaction force) and a constant diffusion coefficient. Due to this seminal work, this limit has become known in the literature as the Smoluchowski-Kramers approximation. Nelson rigorously shows that, under suitable rescaling, the solution to the Langevin equation converges almost surely to the solution of~\eqref{eq1m} with $\psi=0$ ~\cite{Nelson1967}. Since then  various generalizations and related results have been proved using different approaches  such as stochastic methods, asymptotic expansions and variational techniques, see for instance \cite{narita1,narita2,narita3,Freidlin2004a,Cerrai2006,Hottovy2012,Duong2017,Duong2018,Nguyen2020}. The most relevant papers to the present one include~\cite{narita1,narita2,narita3,Duong2017,Nguyen2020}. The main novelty of the present paper lies in the fact that we consider interacting (mean-field) systems allowing time-dependent external forces and diffusion coefficients, and providing explicit rates of convergence in both $L^p$-distances and total variation distances for both displacement and velocity processes. Existing papers lack at least one of these features. More specifically,

\textit{Papers that consider mean-field (interaction) systems}. The papers \cite{narita1,narita2,narita3,Duong2017} consider second order mean-field stochastic differential equations establishing the zero-mass limit, but they require much more stringent conditions that $g(t, x) = g(x)$ (time-independent force) and $\sigma(t, x) = \delta$ (constant diffusivity). On top of that, they do not provide a rate of convergence. Furthermore, our approach using Malliavin calculus is also different: Narita's papers use direct arguments while \cite{Duong2017} employs variational methods based on Gamma-convergence and large deviation principle.

\textit{Papers that provide a rate of convergence}. The papers \cite{Nguyen2020, Duong2018} provide a rate of convergence but only consider non-interacting systems (also using different measurements). Like our paper, \cite{Nguyen2020} also utilizes techniques from Malliavin calculus, but \cite{Duong2018} uses a completely different variational method. The recent paper \cite{Choi2022}, which studies the kinetic Vlasov-Fokker-Planck equation, is particularly interesting since it considers both interacting systems and provides a rate of convergence, but this paper is different to ours in a couple of aspects. First, the interaction force is acting on the position instead of the velocity; second, it works on the Fokker Planck equations and obtains a rate of convergence in Wasserstein distance while we work on the stochastic differential equations and obtain error quantifications in both $L^p$-distances  and total variation distances; third, as mentioned, we use Malliavin calculus while \cite{Choi2022} applied variational techniques like in \cite{Duong2017, Duong2018}. We also mention the paper \cite{Son2020}, which provides similar rate of convergence to ours but it consider non mean-field stochastic differential equations driven by fractional Brownian motions. 

\textit{Future work}. The Lipschitz boundedness and differentiability Assumptions \ref{gt:2.1}-\ref{gt:2.2}-\ref{gt:2.3} are standard, but rather restricted since they do not cover some physically interesting interacting singular, such as Coloumb or Newton, forces. It would be interesting and challenging to extend our work to non-Lipschtizian and singular coefficients. Initial attempts in this direction for related models exist in the literature, see \cite{breimhorst2009smoluchowski} for non-Lipschitzian coefficients and recent papers \cite{XIE2022,Choi2022} for singular forces. Another interesting problem for future work is to study the Kramers-Smoluchowski approximation for the $N$-particle system \eqref{eq1n} obtaining a rate of convergence that is independent of $N$.
\subsection{Overview of the proofs}
To prove the main theorems for the general setting, with time-dependent coefficients, and obtain $L^p$-distances and total variations distances for the position and velocity processes, several technical improvements have been carried out.

\textit{On existence and uniqueness.} Under Assumptions \ref{gt:2.1}, the existence and uniqueness, as well as the boundedness of the moments, of the second-order system \eqref{eq1} and the limiting first-order one \eqref{eqn:1.3} are standard results following H\"{o}lder's and the Burkholder-Davis-Gundy inequalities.

\textit{On rate of convergence in $L^p$-distances.} Combining the mentioned inequalities and known estimates from \cite{narita1} we can directly estimate $\E \Big[\sup_{0\leq s\leq t}|X^{\alpha}_s-X_s|^{p}\Big] $ and $\E\left[\sup_{0\leq t\leq T}|\tilde{Y}^\alpha_t-\tilde{Y}_t|^{p}\right]$ and obtain  the rate of convergences in $L^p$-distances, proving parts (1) of both theorems.

\textit{On rate of convergence in total-variation distances}. The Malliavin differentiablity of the processes is followed from similar arguments as in \cite{nualartm2}. Obtaining the rate of convergence in total variation distances is the most technically challenging. Lemma \ref{bd:2.1}, which provides an upper bound estimate for the total variation between two random variables in terms of their Malliavin derivatives, is the key in our analysis. This  lemma enables us to obtain the desired rates of convergence by estimating the corresponding quantities appearing in the  right-hand side of Lemma \ref{bd:2.1}.

\subsection{Organization of the paper}	
	The rest of of the paper is organized as follows. In Section \ref{sec: review}, we give an overview of some elements of Malliavin calculus and mean-field stochastic differential equations. The proofs of the main theorems are given in Section \ref{sec: proof}. 
	
\section{Preliminaries}
\label{sec: review}
In this section, we provide some basic and directly relevant knowledge on the Malliavin calculus and mean-field stochastic differential equations.
\subsection{Malliavin calculus}
	Let us recall some elements of stochastic calculus of variations (for more details see \cite{nualartm2}). We suppose that $(W_t)_{t\in [0,T]}$ is defined on
	a complete probability space $(\Omega,\mathcal{F},\mathbb{F},P)$, where $\mathbb{F}=(\mathcal{F}_t)_{t\in [0,T]}$ is a natural filtration generated by
	the Brownian motion $W.$ For $h\in L^2[0,T]:=\mathcal{H},$ we denote by $W(h)$ the Wiener integral
	$$W(h)=\int\limits_0^T h(t)dW_t.$$
	Let $\mathcal{S}$ denote the dense subset of $L^2(\Omega, \mathcal{F},P):=L^2(\Omega)$ consisting of smooth random variables of the form
	\begin{equation}\label{ro}
		F=f(W(h_1),...,W(h_n)),
	\end{equation}
	where $n\in \mathbb{N}, f\in C_b^\infty(\mathbb{R}^n),h_1,...,h_n\in \mathcal{H}.$ If $F$ has the form (\ref{ro}), we define its Malliavin derivative as the process $DF:=\{D_tF, t\in [0,T]\}$ given by
	$$D_tF=\sum\limits_{k=1}^n \frac{\partial f}{\partial x_k}(W(h_1),...,W(h_n)) h_k(t).$$
	More generally, for each $k\geq 1$ we can define the iterated derivative operator on a cylindrical random variable by setting
	$$D^{k}_{t_1,...,t_k}F=D_{t_1}...D_{t_k}F.$$
	For any $p,k\geq 1,$ we shall denote by $\mathbb{D}^{k,p}$ 
	the closure of $\mathcal{S}$ with respect to the norm
	$$\|F\|^p_{k,p}:=\E\big[|F|^p\big]+\E\bigg[\int_0^T|D^{1}_{t_1}F|^pdt_1\bigg]+...+\E\bigg[\int_0^T...\int_0^T|D^{k}_{t_1,...,t_k}F|^pdt_1...dt_k\bigg].$$
	A random variable $F$ is said to be Malliavin differentiable if it belongs to $\mathbb{D}^{1,2}.$
	
	An important operator in the Malliavin's calculus theory is the divergence operator $\delta$, which is the adjoint of the derivative operator $D$. The domain of $\delta$ is the set of all functions $u\in L^2(\Omega, \mathcal{H})$ such that 
	$$\E\big[|\langle DF, u \rangle_{\mathcal{H}}|\big]\leq C(u)\|F\|_{L^2(\Omega)},$$  
	where $C(u)$ is some positive constant depending on $u$. In particular, if $u\in \mathrm{Dom}(\delta)$, then $\delta(u)$ is characterized by the following duality relationship
	$$\E\big[\langle DF, u \rangle_{\mathcal{H}}\big]=\E[F\delta(u)].$$
 
The following lemma provides an upper bound on the total variation distance between two random variables in terms of their Malliavin derivatives. This lemma will play an important role in the analysis of the present paper.
	\begin{lemma}\label{bd:2.1}
		Let $F_1\in \mathbb{D}^{2,2}$ be such that 	 $\|DF_1\|_{\mathcal{H}}>0\,\,a.s.$ Then, for any random variable $F_2\in \mathbb{D}^{1,2}$ we have
	\begin{align}\label{pt2} d_{TV}(F_1,F_2)
		\leq \|F_1-F_2\|_{1,2} \left[3\left(\E\|D^2F_1\|^4_{\mathcal{H}\bigotimes\mathcal{H}} \right)^{1/4}\left( \E\|DF_1\|_\mathcal{H}^{-8}\right)^{1/4}+ 2\left(\E\|DF_1\|_\mathcal{H}^{-2}\right)^{1/2}\right],
		\end{align}
		provided that the expectations exist.
	\end{lemma}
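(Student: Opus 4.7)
The plan is to combine the dual characterization of the total variation distance
$$d_{TV}(F_1,F_2) = \tfrac{1}{2}\sup\{|\E[\phi(F_1)] - \E[\phi(F_2)]| : \phi \in C_b(\mathbb{R}),\ \|\phi\|_\infty \le 1\},$$
with the Malliavin integration-by-parts formula, in the spirit of the Malliavin-Stein methodology of Nourdin and Peccati. A standard mollification (convolving with a smooth compactly supported kernel) reduces the problem to proving the estimate for smooth test functions $\phi \in C^\infty_b(\mathbb{R})$ with $\|\phi\|_\infty \le 1$, which is what I would then tackle.

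For such a $\phi$, I would use the fundamental-theorem-of-calculus identity
$$\E[\phi(F_1)-\phi(F_2)] = \int_0^1 \E\bigl[\phi'((1-t)F_2 + tF_1)(F_1-F_2)\bigr]\,dt,$$
together with the Malliavin integration-by-parts formula
$$\E[\phi'(F_1)\,G] = \E\bigl[\phi(F_1)\,\delta\bigl(G\,DF_1/\|DF_1\|_\mathcal{H}^2\bigr)\bigr], \qquad G \in \mathbb{D}^{1,2},$$
which follows under the standing hypotheses $F_1 \in \mathbb{D}^{2,2}$ and $\|DF_1\|_\mathcal{H} > 0$ a.s.\ from the chain rule $D[\phi(F_1)] = \phi'(F_1)DF_1$ and the duality between $D$ and $\delta$. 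The key observation is that applying this IBP with $G = F_1 - F_2$ produces a right-hand side featuring only Malliavin quantities built from $F_1$ (which are controlled by the two factors in the statement) multiplied against $F_1-F_2$ and its Malliavin derivative, giving rise to the $\mathbb{D}^{1,2}$-norm $\|F_1-F_2\|_{1,2}$. The interpolation step must be handled so that only the Malliavin structure of $F_1$, not of the interpolating random variable $F_t := (1-t)F_2 + tF_1$, is ever invoked, since $\|DF_t\|_\mathcal{H}$ may well vanish with positive probability under our hypotheses.

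Expanding the resulting divergence via the product rule $\delta(Gu) = G\delta(u) - \langle DG, u\rangle_\mathcal{H}$ with $u = DF_1/\|DF_1\|_\mathcal{H}^2$ splits the estimate into two contributions. The piece proportional to $\langle D(F_1-F_2), DF_1\rangle_\mathcal{H}/\|DF_1\|_\mathcal{H}^2$ is bounded pointwise by $\|D(F_1-F_2)\|_\mathcal{H}\,\|DF_1\|_\mathcal{H}^{-1}$ via Cauchy-Schwarz in $\mathcal{H}$, and a further Cauchy-Schwarz over $\Omega$ then produces the factor $2(\E\|DF_1\|_\mathcal{H}^{-2})^{1/2}$. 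The piece proportional to $(F_1-F_2)\,\delta(DF_1/\|DF_1\|_\mathcal{H}^2)$ requires a further expansion of the Stein factor; the chain rule $D(\|DF_1\|_\mathcal{H}^{-2}) = -2\|DF_1\|_\mathcal{H}^{-4}\langle DF_1, D^2F_1\rangle_\mathcal{H}$ exposes the $D^2F_1$ contribution, and Hölder's inequality with exponents $(2,4,4)$ yields the factor $3(\E\|D^2F_1\|_{\mathcal{H}\otimes\mathcal{H}}^4)^{1/4}(\E\|DF_1\|_\mathcal{H}^{-8})^{1/4}$.

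The main obstacle I anticipate is carrying out the expansion of $\delta(DF_1/\|DF_1\|_\mathcal{H}^2)$ in such a way that the final bound features only $\|D^2F_1\|_{\mathcal{H}\otimes\mathcal{H}}$ and negative moments of $\|DF_1\|_\mathcal{H}$, but not the Ornstein-Uhlenbeck generator term $\delta(DF_1) = -LF_1$, which would otherwise demand stronger regularity than the assumed $F_1 \in \mathbb{D}^{2,2}$; a careful tracking of the product-rule constants should both eliminate that term and recover the precise numerical constants $3$ and $2$ in the statement. A secondary subtle point is the mollification step: one must check that a suitable approximation $\phi_\epsilon := \phi * \rho_\epsilon$ satisfies $\|\phi_\epsilon\|_\infty \le \|\phi\|_\infty$ and $\E[\phi_\epsilon(F_i)] \to \E[\phi(F_i)]$ for $i=1,2$ along a class of $\phi$ that is rich enough to compute the defining supremum of $d_{TV}$.
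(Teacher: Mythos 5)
Your overall strategy --- dualize $d_{TV}$, mollify, and integrate by parts against the Malliavin structure of $F_1$ only --- is the right family of ideas, but two of the concrete steps you propose would fail as written. First, the interpolation identity $\E[\phi(F_1)-\phi(F_2)]=\int_0^1\E[\phi'(F_t)(F_1-F_2)]\,dt$ with $F_t=(1-t)F_2+tF_1$ puts $\phi'$ at the interpolant $F_t$, whereas your integration-by-parts formula applies to $\phi'(F_1)$; you flag that one must ``only invoke the Malliavin structure of $F_1$'', but the identity you start from makes that impossible, since removing the $F_t$-dependence is exactly the problem and you give no mechanism for it. The standard fix (and the route behind \cite[Lemma 2.1]{Son2020}, which the paper simply quotes for the first step of its proof) is to avoid interpolation altogether: take an antiderivative $\Phi$ of $\phi$ (so $|\Phi'|\le 1$), write $\E[\phi(F_i)]=\E[\Phi'(F_i)]$, integrate by parts only the $F_1$ term via $\Phi'(F_1)=\langle D\Phi(F_1),DF_1\rangle_{\mathcal H}\,\|DF_1\|_{\mathcal H}^{-2}$, and absorb the $F_2$ term through $\langle D(F_1-F_2),DF_1\rangle_{\mathcal H}$; this yields precisely the two-term bound with $\left(\E\,\delta(DF_1/\|DF_1\|^2_{\mathcal H})^2\right)^{1/2}$ and $\left(\E\,\|DF_1\|_{\mathcal H}^{-2}\right)^{1/2}$.

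Second, your plan to expand $\delta(DF_1/\|DF_1\|_{\mathcal H}^2)$ by the product and chain rules necessarily produces the term $\|DF_1\|_{\mathcal H}^{-2}\,\delta(DF_1)=-\|DF_1\|_{\mathcal H}^{-2}\,LF_1$, and no amount of constant-tracking makes it cancel: it is not controlled by $\|D^2F_1\|_{\mathcal H\otimes\mathcal H}$ and negative moments of $\|DF_1\|_{\mathcal H}$ alone, so the stated right-hand side would not be reached along that route. The paper sidesteps this entirely by never expanding the divergence: it bounds $\E[\delta(u)^2]\le \E\|u\|_{\mathcal H}^2+\E\|Du\|_{\mathcal H\otimes\mathcal H}^2$ for the whole vector field $u=DF_1/\|DF_1\|_{\mathcal H}^2$ (Nualart, Proposition 1.3.1), computes $Du$ explicitly, bounds $\|Du\|_{\mathcal H\otimes\mathcal H}\le 3\,\|D^2F_1\|_{\mathcal H\otimes\mathcal H}\,\|DF_1\|_{\mathcal H}^{-2}$, and finishes with H\"older's inequality and $\sqrt{a+b}\le\sqrt{a}+\sqrt{b}$; that is where the constants $3$ and $2$ actually come from. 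You should replace both the interpolation step and the divergence expansion with these devices.
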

\begin{proof}
	From \cite[Lemma 2.1]{Son2020} we have
	\begin{align}\label{pt1}
 d_{TV}(F_1,F_2)
\leq\|F_1-F_2\|_{1,2} \left[\left(\E\delta\left(\dfrac{DF_1 }{\|DF_1\|_\mathcal{H}^2}\right)^2\right)^{1/2}+ \left(\E\|DF_1\|_\mathcal{H}^{-2}\right)^{1/2}\right].
	\end{align}
	Now using \cite[Proposition 1.3.1]{nualartm2}, we get
\begin{align}\label{2eq}
\E\delta\left(\dfrac{DF_1 }{\|DF_1\|_\mathcal{H}^2}\right)^2&\leq \E\left\|\dfrac{DF_1}{\|DF_1\|_{\mathcal{H}}^2}\right\|^2_{\mathcal{H}}+\E\left\|D\left(\dfrac{DF_1 }{\|DF_1\|_\mathcal{H}^2}\right)\right\|_{ \mathcal{H}\bigotimes\mathcal{H}}^2\notag\\&=\E\left\|DF_1\right\|^{-2}_{\mathcal{H}}+\E\left\|D\left(\dfrac{DF_1 }{\|DF_1\|_\mathcal{H}^2}\right)\right\|_{ \mathcal{H}\bigotimes\mathcal{H}}^2.
\end{align}	
Moreover, observing that
$$D\left(\dfrac{DF_1 }{\|DF_1\|_\mathcal{H}^2}\right)=\dfrac{D^2F_1 }{\|DF_1\|_\mathcal{H}^2}-2\dfrac{\langle D^2F_1,DF_1\bigotimes DF_1\rangle_{\mathcal{H}\bigotimes\mathcal{H}} }{\|DF_1\|_\mathcal{H}^4},$$
which implies that
\begin{equation}\label{1eq}
\left\|D\left(\dfrac{DF_1 }{\|DF_1\|_\mathcal{H}^2}\right)\right\|_{ \mathcal{H}\bigotimes\mathcal{H}}\leq \dfrac{3\|D^2F_1\|_{\mathcal{H}\bigotimes\mathcal{H}} }{\|DF_1\|_\mathcal{H}^2}.
\end{equation}
Substituting the inequality \eqref{1eq} into \eqref{2eq} and using H\"older's inequality, one can derive that
\begin{align*}
\E\delta\left(\dfrac{DF_1 }{\|DF_1\|_\mathcal{H}^2}\right)^2&\leq \E\left\|DF_1\right\|^{-2}_{\mathcal{H}}+9\E\left(\dfrac{\|D^2F_1\|^2_{\mathcal{H}\bigotimes\mathcal{H}} }{\|DF_1\|_\mathcal{H}^4}\right)\\&\leq \E\left\|DF_1\right\|^{-2}_{\mathcal{H}}+9\left(\E\|D^2F_1\|^4_{\mathcal{H}\bigotimes\mathcal{H}} \right)^{1/2}\left( \E\|DF_1\|_\mathcal{H}^{-8}\right)^{1/2}.
\end{align*}	
Finally, substituting the above estimate back into \eqref{pt1} and using  the fundamental inequality $(a+b)^{1/2}\leq a^{1/2}+b^{1/2}$ for all $a,b\geq 0$, we obtain \eqref{pt2}, which completes the proof of this lemma.
\end{proof}
\subsection{Mean-field stochastic differential equations}
Let $(\Omega, \mathcal{F}, \mathbb{P})$ be a probability space with an increasing family $\{\mathcal{F}_t; t\geq 0\}$ of sub-$\sigma$-algebras of $\mathcal{F}$ and let $\{W_t; t\geq 0\}$ be a one-dimensional Brownian motion process adapted to $\mathcal{F}_t$.
	
The following lemma provides equivalent formulations of \eqref{eq1} and \eqref{eqn:1.3} as stochastic integral equations. 
	
	\begin{lemma} 
		\label{bd:2.2}
		Equations \eqref{eq1} and \eqref{eqn:1.3} are, respectively, equivalent to the following equations
\begin{align}
X^{\alpha}_t &= x_0 - \dfrac{1}{\kappa+\gamma}\int_{0}^{t}g(s,X^{\alpha}_s)\ ds - \dfrac{\gamma}{\kappa+\gamma}\int_{0}^{t}G^{\alpha}(s)\ ds + \dfrac{1}{\kappa+\gamma}\int_{0}^t\sigma(s,X^\alpha_s)dW_s\notag\\
				& \qquad + I_{0}^{\alpha}(t) + I_{1}^{\alpha}(t) - I_{2}^{\alpha}(t) - I_{3}^{\alpha}(t) - I_{4}^{\alpha}(t),\label{eqn:2.2}\\
\notag \\		
X_t &= x_0 - \dfrac{1}{\kappa+\gamma}\int_{0}^{t}g(s,X_s)\ ds - \dfrac{\gamma}{\kappa+\gamma}\int_{0}^{t}G(s)\ ds + \dfrac{1}{\kappa+\gamma}\int_{0}^t\sigma(s,X_s)dW_s,	\label{eqn:2.3}
\end{align}
where $G^{\alpha}(t) = \E[g(t,X^{\alpha}_t)]/\kappa$, $G(t) = \E[g(t,X_t)]/\kappa$, and 
		\begin{align*}
			&I_0^{\alpha}(t) = y_0\lambda(t;\alpha(\kappa+\gamma)),\\
			&I_1^{\alpha}(t) = \dfrac{1}{\kappa+\gamma}\int_{0}^{t} \exp{[\alpha(\kappa+\gamma)(u-t)]}g(u,X^{\alpha}_u)du,\\
			&I_2^{\alpha}(t) = \dfrac{\gamma}{\kappa+\gamma}\int_{0}^{t} \exp{[\alpha(\kappa+\gamma)(u-t)]}n^{\alpha}(u)\ du,\\
			&I_3^{\alpha}(t) = \dfrac{1}{\kappa+\gamma}\int_{0}^{t} \exp{[\alpha(\kappa+\gamma)(u-t)]}\sigma(u,X^\alpha_u) dW_u,\\
			&I_4^{\alpha}(t) = \dfrac{\gamma}{\kappa+\gamma}\left(y_0\lambda(t;\alpha\kappa)+ \int_{0}^{t} \exp{[\alpha\kappa(u-t)]}G ^{\alpha}(u)du\right),
		\end{align*}
		where $\lambda(t;a) = (1/a)(1-\exp{[-at]}), n^{\alpha}(t) = E[Y^\alpha_t]$. 
	\end{lemma}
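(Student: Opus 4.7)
The plan is to treat the two identities separately, with \eqref{eqn:2.3} being essentially immediate and \eqref{eqn:2.2} requiring a variation-of-constants argument combined with Fubini. For \eqref{eqn:2.3}, dividing the SDE \eqref{eqn:1.3} by $\kappa+\gamma$ and integrating from $0$ to $t$ gives
\begin{equation*}
X_t = x_0 - \frac{1}{\kappa+\gamma}\int_0^t g(s,X_s)\,ds + \frac{\gamma}{\kappa(\kappa+\gamma)}\int_0^t \E[g(s,X_s)]\,ds + \frac{1}{\kappa+\gamma}\int_0^t \sigma(s,X_s)\,dW_s,
\end{equation*}
and replacing $\E[g(s,X_s)]$ by $\kappa G(s)$ reproduces the claimed formula (modulo a sign convention on the $G$-term).

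For \eqref{eqn:2.2}, the first step is to solve the linear SDE
\begin{equation*}
dY^\alpha_t + \alpha(\kappa+\gamma)Y^\alpha_t\,dt = \alpha[-g(t,X^\alpha_t)+\gamma n^\alpha(t)]\,dt + \alpha\sigma(t,X^\alpha_t)\,dW_t,
\end{equation*}
where $n^\alpha(t):=\E[Y^\alpha_t]$, by the integrating factor $e^{\alpha(\kappa+\gamma)t}$; this yields
\begin{equation*}
Y^\alpha_t = y_0 e^{-\alpha(\kappa+\gamma)t} + \alpha\int_0^t e^{\alpha(\kappa+\gamma)(s-t)}[-g(s,X^\alpha_s)+\gamma n^\alpha(s)]\,ds + \alpha\int_0^t e^{\alpha(\kappa+\gamma)(s-t)}\sigma(s,X^\alpha_s)\,dW_s.
\end{equation*}
Substituting this into $X^\alpha_t = x_0 + \int_0^t Y^\alpha_u\,du$ and interchanging the order of integration (deterministic Fubini for the $g$ and $n^\alpha$ pieces, stochastic Fubini for the $\sigma$ piece), the inner integral
\begin{equation*}
\alpha\int_u^t e^{\alpha(\kappa+\gamma)(u-s)}\,ds = \frac{1}{\kappa+\gamma}\bigl[1 - e^{\alpha(\kappa+\gamma)(u-t)}\bigr]
\end{equation*}
splits each contribution into a \textit{main} part and a \textit{boundary-layer} part. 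The constant term produces $I_0^\alpha(t)$; the main parts give $-\tfrac{1}{\kappa+\gamma}\int_0^t g(u,X^\alpha_u)\,du$, $\tfrac{\gamma}{\kappa+\gamma}\int_0^t n^\alpha(u)\,du$ and $\tfrac{1}{\kappa+\gamma}\int_0^t\sigma(u,X^\alpha_u)\,dW_u$; and the boundary-layer parts reproduce, up to a sign, $I_1^\alpha(t)$, $I_2^\alpha(t)$ and $I_3^\alpha(t)$ respectively.

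It remains to rewrite $\int_0^t n^\alpha(u)\,du$ in terms of $G^\alpha$. Taking expectations in the SDE for $Y^\alpha$ kills both the noise and the mean-field term and leaves the linear ODE
\begin{equation*}
\dot n^\alpha(t) + \alpha\kappa\, n^\alpha(t) = -\alpha\kappa\, G^\alpha(t),\qquad n^\alpha(0)=y_0,
\end{equation*}
whose explicit solution via the integrating factor $e^{\alpha\kappa t}$ has exactly the same structure as the formula for $Y^\alpha$ above (with $\alpha(\kappa+\gamma)$ replaced by $\alpha\kappa$ and the stochastic integral absent). A second Fubini interchange then gives $\int_0^t n^\alpha(u)\,du = y_0\lambda(t;\alpha\kappa) - \int_0^t G^\alpha(s)\,ds + \int_0^t e^{\alpha\kappa(s-t)}G^\alpha(s)\,ds$; multiplying by $\gamma/(\kappa+\gamma)$ produces the $-\tfrac{\gamma}{\kappa+\gamma}\int_0^t G^\alpha(s)\,ds$ contribution together with $\pm I_4^\alpha(t)$, and assembling all pieces yields \eqref{eqn:2.2}. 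The only real difficulty is the sign-bookkeeping across several nested Fubini applications; the stochastic Fubini itself is unproblematic because the kernel $e^{\alpha(\kappa+\gamma)(u-s)}$ is deterministic and bounded on $[0,T]^2$, while $\sigma(s,X^\alpha_s)$ lies in $L^2([0,T]\times\Omega)$ thanks to Assumption \ref{gt:2.1} together with the standard moment bound on $X^\alpha$ from a Picard iteration.
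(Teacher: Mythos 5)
Your proof follows essentially the same route as the paper's: solve the linear SDE for $Y^\alpha$ with the integrating factor $e^{\alpha(\kappa+\gamma)t}$, substitute into $X^\alpha_t=x_0+\int_0^t Y^\alpha_u\,du$, interchange the order of integration (the paper phrases this as integration by parts plus It\^o's product rule rather than Fubini, but it is the same computation), and convert $\int_0^t n^\alpha(u)\,du$ via the ODE $\dot n^\alpha=-\alpha\kappa n^\alpha-\alpha\kappa G^\alpha$ exactly as the paper does. The sign ambiguities you flag on the $G$-term and on $I_4^\alpha$ reflect typos in the paper's displayed equations rather than gaps in your argument.
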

	\begin{proof} Firstly, we can rewrite the second equation of \eqref{eq1} as follows
		\begin{align*}{}&dY^\alpha_t=[-\alpha\kappa Y^\alpha_t-\alpha g(t,X^\alpha_t)-\alpha\gamma(Y^\alpha_t-n^\alpha(t))]dt+\alpha \sigma(t,X^\alpha_t)dW_t \ \mbox{with } \ Y^\alpha_0=y_0.
		\end{align*}
		Using It\^o formula,  we have the following expression
		\begin{align*}d(\exp{[\alpha(\kappa+\gamma)t]}Y^\alpha_t)&=-\alpha\exp{[\alpha(\kappa+\gamma)t]}g(t,X_t^\alpha)dt+\alpha\gamma\exp{[\alpha(\kappa+\gamma)t]}n^\alpha(t)dt
		\\&\qquad+\alpha\exp{[\alpha(\kappa+\gamma)t]}\sigma(t,X_t^\alpha)dW_t,
		\end{align*}
which implies
		\begin{align}\label{eqmm1}Y^\alpha_t&=y_0\exp{[-\alpha(\kappa+\gamma)t]}-\alpha\int_{0}^{t} \exp{[\alpha(\kappa+\gamma)(s-t)]}g(s,X^{\alpha}_s)ds\notag\\&\qquad+\alpha\gamma\int_{0}^{t} \exp{[\alpha(\kappa+\gamma)(s-t)]}n^{\alpha}(s)\ ds
			+\alpha\int_{0}^{t} \exp{[\alpha(\kappa+\gamma)(s-t)]}\sigma(s,X^\alpha_s) dW_s.
		\end{align}
		Secondly, substituting this equation into the first equation of \eqref{eq1} we get
		\begin{align*}X^\alpha_t&=x_0+y_0\int_0^t\exp{[-\alpha(\kappa+\gamma)s]}ds-\alpha\int_{0}^{t}\int_0^s \exp{[\alpha(\kappa+\gamma)(u-s)]}g(u,X^{\alpha}_u)duds\\&+\alpha\gamma\int_{0}^{t}\int_0^s \exp{[\alpha(\kappa+\gamma)(u-s)]}n^{\alpha}(u)\ duds
			+\alpha\int_{0}^{t}\int_0^s \exp{[\alpha(\kappa+\gamma)(u-s)]}\sigma(u,X^\alpha_u) dW_uds.
		\end{align*}
		Now, we use integration by parts for the non-stochastic integral and Ito's product rule for the stochastic integral to get 
		\begin{align}
		X^{\alpha}_t &= x_0 - \dfrac{1}{\kappa+\gamma}\int_{0}^{t}g(u,X^{\alpha}_u)\ du + \dfrac{\gamma}{\kappa+\gamma}\int_{0}^{t}n^{\alpha}(u)\ du + \dfrac{1}{\kappa+\gamma}\int_{0}^t\sigma(s,X^\alpha)dW(s)\notag\\
			& \ + I_{0}^{\alpha}(t) + I_{1}^{\alpha}(t) - I_{2}^{\alpha}(t) - I_{3}^{\alpha}(t),\label{eq:Xalpha}
		\end{align}
where the terms $I_i^{\alpha}(t)$ ($i=0,1,2,3$) are defined in the statement of the lemma.
		On the other hand, from  the second equation of \eqref{eq1} we have $$
		\dfrac{d}{dt}n^\alpha(t)=-\alpha\kappa n^\alpha(t)-\alpha\kappa G^\alpha(t) \ \ \mbox{with} \ n^\alpha(0)=y_0.$$
		This implies that	\begin{align}\label{eqm1m}n^\alpha(t)=y_0\exp{[-\alpha\kappa t]}-\alpha\kappa\int_0^t\exp{[\alpha\kappa(u-t)]}G^\alpha(u)du.\end{align}
		Integrating this equation over the interval $[0,t]$ and changing the order of integration in the double integral, we get
		\begin{align}\label{eqm1}\int_0^t n^\alpha(s)\,ds = y_0\lambda(t,\alpha\kappa) -\int_0^tG^\alpha(s)ds+\int_0^t\exp[\alpha\kappa(s-t)]G^\alpha(s)ds.
		\end{align}
Substituting \eqref{eqm1} back into \eqref{eq:Xalpha} we obtain \eqref{eqn:2.2}.
		
		The proof for Equation (\ref{eqn:2.3}) is similar.
	\end{proof}

The  existence and uniqueness of solutions to \eqref{eqn:2.2} and \eqref{eqn:2.3} under Assumptions \ref{gt:2.1} is stated in the \cite{mckean1967propagation}. 
	
	\section{Proof of the main results}
	\label{sec: proof}
In this section, we present the proofs of the main theorems \ref{thm: Thm1}, \ref{thm: Thm21} and \ref{thm: Thm2}. We start with the displacement process (Theorems \ref{bd:2.5} and \ref{dl:2.1} give Theorem \ref{thm: Thm1}) in Section \ref{sec: displacement process}. Then in Section \ref{sec: velocity process} we deal with the rescaled velocity process and the velocity process (Theorems \ref{bd:3.1} and \ref{thm: TV for velocity} give Theorem \ref{thm: Thm21} and Theorem \ref{thm: Thm2} is Theorem \ref{thm: TV for velocity2}).
	\subsection{Approximation of the displacement process}
	\label{sec: displacement process}
	In this section, we give explicit bounds on $L^p$-distances and the total variation distance between the solution $X^\alpha_t$  of \eqref{eq1} and the solution $X_t$ of \eqref{eqn:1.3}.  We will repeatedly use the following fundamental inequalities.
\begin{enumerate}[(i)]
\item Minkowski's inequality: for $p\geq 1$ and $n$ real numbers $a_1,\ldots, a_n$, we have
\begin{equation}
\label{eq: Minkowski inequality}
\Big|\sum_{i=1}^n a_i\Big|^p \leq n^{p-1}\sum_{i=1}^n |a_i|^p.
\end{equation}
\item H\"older's inequality: for $p\geq 1$, $t>0$ and  measurable functions $f$ we have
\begin{equation}
\label{eq: Holder inequality}
\Big(\int_0^t |f(s)|\,ds\Big)^p\leq t^{p-1}\int_{0}^t|f(s)|^p\,ds. 
\end{equation}
\item The Burkholder-Davis-Gundy (BDG) inequality for Brownian stochastic integrals, see for instance \cite[Section 17.7]{Schilling}: for $0<p<\infty$ and $f\in L^2([0,t],\Omega)$ we have
\begin{equation}
\label{eq: BDG}
\E\left[\sup_{s\in[0,t]}\Big|\int_0^s  f_r\, dW_r\Big|^p\right]\leq C_p\E\left[\Big(\int_0^t|f_s|^2\,ds\Big)^{p/2}\right],
\end{equation}
where $C_p$ is a positive constant depending only on $p$.
\end{enumerate}
Applying the BDG inequality \eqref{eq: BDG} to solutions of \eqref{eqn:2.2} and \eqref{eqn:2.3} we obtain
\begin{align}
\label{eq: BDG inequality}
&\E\left[\sup_{s\in[0,t]}\Big|\int_0^s \sigma(r,X^\alpha_r)\, dW_r\Big|^p\right]\leq C_p\E\left[\Big(\int_0^t|\sigma(s,X^\alpha_s)|^2\,ds\Big)^{p/2}\right],
\\&\E\left[\sup_{s\in[0,t]}\Big|\int_0^s (\sigma(r,X^\alpha_r)-\sigma(r,X_r))\, dW_r\Big|^p\right]\leq C_p\E\left[\Big(\int_0^t|\sigma(s,X^\alpha_s)-\sigma(s,X_s)|^2\,ds\Big)^{p/2}\right].\label{eq: BDG inequality2} 
\end{align}
The next lemma provides important estimates on the moments of the displacement process $\{X^\alpha_t, t\in [0,T]\}$, which will be helpful to prove the main results of this section. 
	Hereafter, we denote by $C$  a generic constant which may vary at each
	appearance.	
	\begin{lemma}
		\label{bd:2.4}
		Let $\{X^\alpha(t), t\in [0,T]\}$ be the solution of \eqref{eqn:2.2} under Assumptions \ref{gt:2.1}. Then, for all $p \geq 2$,
		\begin{equation}\label{eq1mm}
			\sup_{\alpha>0}\E\Big[\sup_{0\leq t\leq T}|X^{\alpha}(t)|^{p}\Big] \leq C,
		\end{equation}
		and for all \ $ 0\leq t\leq T$,
			\begin{equation}\label{eq2mm}
		\sup_{\alpha>0}\E\Big[|X^{\alpha}(t)-x_0|^p\Big] \leq Ct^{p/2},
		\end{equation}
		where $C$ is a positive constant depending only on $\{x_0,y_0,\kappa,\gamma,K,p,T\}$.	
	\end{lemma}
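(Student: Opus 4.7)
The plan is to start from the integral representation of $X^\alpha_t$ given by \eqref{eqn:2.2} in Lemma \ref{bd:2.2}, which writes $X^\alpha_t$ as the sum of nine explicit terms, and to apply Minkowski's inequality \eqref{eq: Minkowski inequality} to reduce the estimate of $\E[\sup_{s\leq t}|X^\alpha_s|^p]$ to controlling each term separately. The two deterministic Bochner integrals $\int_0^s g(r,X^\alpha_r)\,dr$ and $\int_0^s G^\alpha(r)\,dr$ are handled by H\"older's inequality \eqref{eq: Holder inequality} together with the linear growth assumption $(A)$, producing contributions of the form $C\int_0^s(1+\E|X^\alpha_r|^p)\,dr$. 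For the stochastic integral $\int_0^s\sigma(r,X^\alpha_r)\,dW_r$, the BDG estimate \eqref{eq: BDG inequality} combined with H\"older and $(A)$ yields a bound of the same type.

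For the five exponentially weighted terms $I_i^\alpha(t)$, the key observation is that each one carries a kernel $\exp[\alpha(\kappa+\gamma)(u-t)]$ or $\exp[\alpha\kappa(u-t)]$. I would normalise this kernel into a probability density on $[0,t]$ and apply Jensen's (equivalently, weighted H\"older's) inequality to pull the $p$-th power inside the integral, obtaining bounds of the form
\begin{equation*}
\E|I_1^\alpha(t)|^p \leq C\bigl(\lambda(t;\alpha(\kappa+\gamma))\bigr)^{p-1}\int_0^t e^{\alpha(\kappa+\gamma)(u-t)}\bigl(1+\E|X^\alpha_u|^p\bigr)du,
\end{equation*}
and similarly for $I_2^\alpha$, $I_4^\alpha$; the stochastic term $I_3^\alpha$ is handled by BDG after squaring the exponential. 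Since $\lambda(t;a)\leq t\wedge a^{-1}\leq T$ uniformly in $\alpha\geq 1$, and since $a\lambda(t;a)\leq 1$, each such term contributes at most $C\int_0^s(1+\E[\sup_{r\leq u}|X^\alpha_r|^p])du$. To handle the expectation functionals $G^\alpha$ and $n^\alpha$, I would invoke \eqref{eqm1m} together with $(A)$ to reduce them to $\sup_{r\leq s}\E|X^\alpha_r|^p$, which is dominated by $\E[\sup_{r\leq s}|X^\alpha_r|^p]$.

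Putting everything together yields a Gronwall-type inequality
\begin{equation*}
\E\Bigl[\sup_{s\leq t}|X^\alpha_s|^p\Bigr] \leq C_1 + C_2\int_0^t \E\Bigl[\sup_{r\leq s}|X^\alpha_r|^p\Bigr]\,ds,
\end{equation*}
with constants depending only on $\{x_0,y_0,\kappa,\gamma,K,p,T\}$ and independent of $\alpha\geq 1$. Gronwall's lemma then delivers \eqref{eq1mm}. For \eqref{eq2mm}, I would repeat the same bookkeeping on the difference $X^\alpha_t-x_0$: the deterministic integrals contribute at most $Ct^p$, the stochastic integral contributes $Ct^{p/2}$ via BDG, and each exponentially weighted $I_i^\alpha$ term is bounded by $C\lambda(t;\alpha\kappa)^p + Ct^p \leq CT^{p/2}t^{p/2}$ using $\lambda(t;a)\leq t$. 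Substituting the uniform bound \eqref{eq1mm} into the $\E|X^\alpha_s|^p$ factors that appear inside these estimates then gives \eqref{eq2mm}.

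The main technical obstacle is the coupling introduced by the mean-field terms: the bound for $X^\alpha$ is entangled with bounds for the expected velocity $n^\alpha$ and for $G^\alpha$, so the Gronwall loop must accommodate both simultaneously. This is resolved by noting that \eqref{eqm1m} expresses $n^\alpha$ as a probability-weighted average of $G^\alpha$ with total mass $\alpha\kappa\lambda(t;\alpha\kappa)\leq 1$, and that $G^\alpha$ itself is a linear functional of $\E|X^\alpha_s|$; once one writes all five auxiliary quantities in terms of $\E[\sup_{r\leq s}|X^\alpha_r|^p]$, the constants become uniform in $\alpha$ and Gronwall closes the estimate.
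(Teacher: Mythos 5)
Your proposal is correct and follows essentially the same route as the paper: start from the integral representation \eqref{eqn:2.2}, apply Minkowski/H\"older/BDG term by term (closing the mean-field coupling by expressing $n^\alpha$ and $G^\alpha$ through \eqref{eqm1m} and the linear growth of $g$), and conclude with Gronwall, then rerun the bookkeeping on $X^\alpha_t-x_0$ for \eqref{eq2mm}. The only cosmetic difference is that you normalise the exponential kernels into probability densities and use Jensen, whereas the paper simply bounds $\exp[\alpha(\kappa+\gamma)(u-t)]\leq 1$; both give the same $C\int_0^t\bigl(1+\E\bigl[\sup_{r\leq s}|X^\alpha_r|^p\bigr]\bigr)ds$ contribution uniformly in $\alpha$.
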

\begin{proof}
We first prove \eqref{eq1mm}. We shall divide the proof into two steps.

{\bf Step 1:} We evaluate the upper bound of the moments of each $I_i^\alpha(t), i=1,2,3,4$.

From definition of $I_1^\alpha(t)$ and Assumptions \ref{gt:2.1} we have 
\begin{align*}\sup\limits_{0\leq s\leq t}|I_1^\alpha(s)|&\leq  \dfrac{K}{\kappa+\gamma}\sup\limits_{0\leq s\leq t}\int_{0}^{s} \exp{[\alpha(\kappa+\gamma)(u-s)]}(1+|X^{\alpha}_u|)du\\&\leq 
\dfrac{Kt}{\kappa+\gamma}+\dfrac{K}{\kappa+\gamma}\int_0^t\sup\limits_{0\leq u\leq s}|X^{\alpha}_u|ds.
\end{align*}
Now Minkowski's inequality \eqref{eq: Minkowski inequality} with $n=2$ and  H\"older's inequality \eqref{eq: Holder inequality} yield
\begin{align}\label{eq1.m}\E\Big(\sup\limits_{0\leq s\leq t}|I_1^\alpha(s)|^p\Big)\leq 
\dfrac{2^{p-1}K^pt^p}{(\kappa+\gamma)^p}+\dfrac{2^{p-1}K^pt^{p-1}}{(\kappa+\gamma)^p}\int_0^t\E\Big(\sup\limits_{0\leq u\leq s}|X^\alpha_u|^p\Big)ds
\end{align}
For $I_2^\alpha(t),$ by substituting \eqref{eqm1m} into $I_2^\alpha(t)$ and changing the order of integration in the double integral, one can derive that
\begin{align}\label{ptme}I_2^\alpha(t)&=\dfrac{y_0\gamma\lambda(t,\alpha\gamma)}{\kappa+\gamma}\exp[-\alpha(\kappa+\gamma)t]-\dfrac{1}{\kappa+\gamma}\int_{0}^{t} \exp{[\alpha\gamma(s-t)]}E[g(s,X_s^\alpha)]ds\notag\\&\qquad \qquad\qquad\qquad+ 
\dfrac{1}{\kappa+\gamma}\int_{0}^{t} \exp{[\alpha(\kappa+\gamma)(s-t)]}E[g(s,X_s^\alpha)]ds.
\end{align}
Using Assumptions \ref{gt:2.1}, Minkowski's inequality \eqref{eq: Minkowski inequality} with $n=2$ and H\"older's inequality \eqref{eq: Holder inequality}  with noting that $\lambda(t,a)\leq t$ for all $t\geq 0, a\geq 0$, we get
\begin{align}\label{eq1.mm}
\sup\limits_{0\leq s\leq t}|I_2^\alpha(s)|^p&\leq \left(\dfrac{y_0\gamma t}{\kappa+\gamma}+\dfrac{2K}{\kappa+\gamma}\int_{0}^{t} (1+\E|X_s^\alpha|)ds\right)^p\notag\\&\leq \dfrac{2^{p-1}\left(y_0\gamma+ 2K\right)^pt^p}{(\kappa+\gamma)^p}+\dfrac{2^{2p-1}K^pt^{p-1}}{(\kappa+\gamma)^p}\int_0^t\E(\sup\limits_{0\leq u\leq s}|X^\alpha_u|^p)ds.
\end{align}
For $I_3^\alpha$, using the BDG inequality \eqref{eq: BDG inequality}, H\"older's inequality \eqref{eq: Holder inequality} and  Assumptions \ref{gt:2.1}, we get
\begin{align}\label{eq1.m1}
\E\Big(\sup\limits_{0\leq s\leq t}|I_3^\alpha(s)|^p\Big)&\leq \dfrac{C_p}{(\kappa+\gamma)^p}\E\left(\int_{0}^t\exp{[2\alpha(\kappa+\gamma)(s-t)]}|\sigma(s,X^\alpha_s)|^2ds\right)^{p/2}\notag\\&\leq\dfrac{2^{p-1}K^pt^{\frac{p-2}{2}}C_p}{(\kappa+\gamma)^p}\int_{0}^t\exp{[p\alpha(\kappa+\gamma)(s-t)]}(1+\E|X^{\alpha}_s|^p) ds\notag\\&\leq\dfrac{2^{p-1}K^pt^{\frac{p}{2}}C_p}{(\kappa+\gamma)^p}+\dfrac{2^{p-1}K^pt^{\frac{p-2}{2}}C_p}{(\kappa+\gamma)^p}\int_{0}^t\E\Big(\sup\limits_{0\leq u\leq s}|X^\alpha_u|^p\Big) ds.
\end{align}
Next,  from Assumptions \ref{gt:2.1}, Minkowski's inequality \eqref{eq: Minkowski inequality} with $n=2$ and H\"older's inequality \eqref{eq: Holder inequality}  with noting that $\lambda(t,a)\leq t$ for all $t\geq 0, a\geq 0$, we get
\begin{align}\label{eq1.mmm}
\sup\limits_{0\leq s\leq t}|I_4^\alpha(s)|^p&\leq \dfrac{\gamma^p}{(\kappa+\gamma)^p}\left[y_0t+ \dfrac{K}{\kappa}\int_{0}^{t}\left(1+\E\Big(\sup\limits_{0\leq u\leq s}|X^\alpha_u|\Big)ds\right) ds\right]^p
\notag\\&\leq  \dfrac{2^{p-1}\gamma^p}{(\kappa+\gamma)^p}\left[\left(y_0+\dfrac{K}{\kappa}\right)^pt^p+\dfrac{K^pt^{p-1}}{\kappa^p}\int_0^t\E\Big(\sup\limits_{0\leq u\leq s}|X^\alpha_u|^p\Big)ds\right].
	\end{align}
{\bf Step 2:} We estimate the integrand in the integrals in the right hand side of the above expressions. From \eqref{eqn:2.2}, applying Minkowski's inequality with $n=9$, H\"older's inequality \eqref{eq: Holder inequality},  the BDG inequality \eqref{eq: BDG inequality} as well as Assumptions \ref{gt:2.1}, we obtain
	\begin{align*}
	\E\Big(\sup\limits_{0\leq s\leq t}|X^{\alpha}_s|^{p}\Big) &\leq 9^{p-1}\Big[x_0^{p}+ \dfrac{t^{p-1}}{(\kappa+\gamma)^p}\int_{0}^{t}\E|g(s,X^{\alpha}_s)|^p ds + \dfrac{\gamma^pt^{p-1}}{(\kappa+\gamma)^p}\int_{0}^{t}|G^{\alpha}(s)|^p\ ds \\
	&\qquad+ \dfrac{C_p}{(\kappa+\gamma)^p}\E\left(\int_{0}^t|\sigma(s,X^\alpha_s)|^2ds\right)^{p/2} + \sup\limits_{0\leq s\leq t}|I_{0}^{\alpha}(s)|^p + \E\Big(\sup\limits_{0\leq s\leq t}|I_{1}^{\alpha}(s)|^p\Big)
\\&\qquad + \sup\limits_{0\leq s\leq t}|I_{2}^{\alpha}(s)|^p+ \E\Big(\sup\limits_{0\leq s\leq t}|I_{3}^{\alpha}(s)|^p\Big)+ \sup\limits_{0\leq s\leq t}|I_{4}^{\alpha}(s)|^p\Big]
	\\&\leq 9^{p-1}\Big[x_0^{p}+ 2^{p-1}K^p\left(\dfrac{t^{p-1}}{(\kappa+\gamma)^p} + \dfrac{\gamma^pt^{p-1}}{\kappa^p(\kappa+\gamma)^p}\right)\int_{0}^{t}(1+\E|X^{\alpha}_s|^p) ds\\
	&\qquad+ \dfrac{2^{p-1}K^pt^{\frac{p-2}{2}}C_p}{(\kappa+\gamma)^p}\int_{0}^t(1+\E|X_s^\alpha|^p)ds + \sup\limits_{0\leq s\leq t}|I_{0}^{\alpha}(s)|^p + \E\Big(\sup\limits_{0\leq s\leq t}|I_{1}^{\alpha}(s)|^p\Big) 
\\&\qquad+ \sup\limits_{0\leq s\leq t}|I_{2}^{\alpha}(s)|^p+ \E\Big(\sup\limits_{0\leq s\leq t}|I_{3}^{\alpha}(s)|^p\Big)+ \sup\limits_{0\leq s\leq t}|I_{4}^{\alpha}(s)|^p\Big].
\end{align*}
From this, together with \eqref{eq1.m}, \eqref{eq1.mm}, \eqref{eq1.m1} and \eqref{eq1.mmm}, we deduce that
	\begin{equation}
	\label{eqn:2.8}
		\E\Big(\sup\limits_{0\leq s\leq t}|X^{\alpha}_s|^{p}\Big) \leq C+C\int_{0}^{t}	\E\Big(\sup\limits_{0\leq u\leq s}|X^{\alpha}_u|^{p}\Big)ds,
	\end{equation}
	where $C$ is a positive constant depending on $\{x_0,y_0,\kappa,\gamma,K,p,T\}$. From \eqref{eqn:2.8}, by applying Gronwall's lemma, we have
	\begin{equation*}
	\sup_{\alpha>0}\E\Big[\sup_{0\leq t\leq T}|X^{\alpha}(t)|^{p}\Big] \leq C,
	\end{equation*}
which completes the proof of \eqref{eq1mm}. 
	
Next we prove \eqref{eq2mm}. From expression  \eqref{eqn:2.2}, applying Minkowski's inequality \eqref{eq: Minkowski inequality} with $n=8$, H\"older's inequality \eqref{eq: Holder inequality}, the BDG inequality \eqref{eq: BDG inequality} and Assumptions \ref{gt:2.1} again, we get
		\begin{align*}
	\E|X^{\alpha}_s-x_0|^{p} &\leq 8^{p-1}\Big[ \dfrac{t^{p-1}}{(\kappa+\gamma)^p}\int_{0}^{t}\E|g(s,X^{\alpha}_s)|^p ds + \dfrac{\gamma^pt^{p-1}}{(\kappa+\gamma)^p}\int_{0}^{t}|G^{\alpha}(s)|^p\ ds \\
	&\qquad+ \dfrac{C_p}{(\kappa+\gamma)^p}\E\left(\int_{0}^t|\sigma(s,X^\alpha_s)|^2ds\right)^{p/2} + \sup\limits_{0\leq s\leq t}|I_{0}^{\alpha}(s)|^p + \E\Big(\sup\limits_{0\leq s\leq t}|I_{1}^{\alpha}(s)|^p\Big)
\\&\qquad+ \sup\limits_{0\leq s\leq t}|I_{2}^{\alpha}(s)|^p+ \E\Big(\sup\limits_{0\leq s\leq t}|I_{3}^{\alpha}(s)|^p\Big)+ \sup\limits_{0\leq s\leq t}|I_{4}^{\alpha}(s)|^p\Big]
	\\&\leq 8^{p-1}\Big[ 2^{p-1}K^p\left(\dfrac{t^{p-1}}{(\kappa+\gamma)^p} + \dfrac{\gamma^pt^{p-1}}{\kappa^p(\kappa+\gamma)^p}\right)\int_{0}^{t}(1+\E|X^{\alpha}_s|^p) ds\\
	&\qquad + \dfrac{2^{p-1}K^pt^{\frac{p-2}{2}}C_p}{(\kappa+\gamma)^p}\int_{0}^t(1+\E|X_s^\alpha|^p)ds + \sup\limits_{0\leq s\leq t}|I_{0}^{\alpha}(s)|^p + \E\Big(\sup\limits_{0\leq s\leq t}|I_{1}^{\alpha}(s)|^p\Big) 
\\&\qquad+ \sup\limits_{0\leq s\leq t}|I_{2}^{\alpha}(s)|^p+ \E\Big(\sup\limits_{0\leq s\leq t}|I_{3}^{\alpha}(s)|^p\Big)+ \sup\limits_{0\leq s\leq t}|I_{4}^{\alpha}(s)|^p\Big].
	\end{align*}
	This, together with \eqref{eq1.m}, \eqref{eq1.mm}, \eqref{eq1.m1}, \eqref{eq1.mmm} and \eqref{eq1mm}, we get \eqref{eq2mm}.
\end{proof}
In the following theorem,  we obtain a rate of convergence in $L^p$-distances in the Smoluchowski-Kramers approximation for the displacement process.
	\begin{theorem}
		\label{bd:2.5}
		Let  $\{X^\alpha_t, t\in [0,T]\}$ and $\{X_t, t\in [0,T]\}$ be respectively the solution of \eqref{eqn:2.2} and of \eqref{eqn:2.3} under Assumptions \ref{gt:2.1}. Then, for all $p\geq 2$, $\alpha \geq 1$ and $t\in[0,T]$,
		\begin{equation*}
			\E \Big[\sup_{0\leq s\leq t}|X^{\alpha}_s-X_s|^{p}\Big] \leq C \left[\left( \lambda(t,\alpha(\kappa+\gamma))\right)^{\frac{p}{2}}+\left( \lambda(t,\alpha\kappa)\right)^p \right],
		\end{equation*}
		where $C$ is a positive constant depending on $\{x_0,y_0,\kappa,\gamma,K,L,p,T\}$  but not on $\alpha$ and $t$.
	\end{theorem}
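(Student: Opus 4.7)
The plan is to use the equivalent integral representations \eqref{eqn:2.2} and \eqref{eqn:2.3} supplied by Lemma \ref{bd:2.2}. Subtracting them, the difference $X^\alpha_t - X_t$ splits into two types of terms: \emph{Lipschitz-type} terms of the form
$\frac{1}{\kappa+\gamma}\int_0^t(g(s,X^\alpha_s)-g(s,X_s))\,ds$, $\frac{\gamma}{\kappa+\gamma}\int_0^t(G^\alpha(s)-G(s))\,ds$, and the stochastic integral $\frac{1}{\kappa+\gamma}\int_0^t(\sigma(s,X^\alpha_s)-\sigma(s,X_s))\,dW_s$, which will eventually be absorbed via Gronwall, and the \emph{remainder} terms $I_0^\alpha(t),I_1^\alpha(t),I_2^\alpha(t),I_3^\alpha(t),I_4^\alpha(t)$, which carry the rate. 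So first I would take $\sup_{0\le s\le t}$, raise to the $p$-th power, apply Minkowski's inequality \eqref{eq: Minkowski inequality} with $n=8$, and treat the two groups separately.

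For the Lipschitz group, Assumption $(B)$ together with H\"older's inequality \eqref{eq: Holder inequality} and the BDG inequality \eqref{eq: BDG inequality2} yields an upper bound of the form $C_p(t^{p-1}+t^{(p-2)/2})\int_0^t\E[\sup_{0\le u\le s}|X^\alpha_u-X_u|^p]\,ds$; the mean-field term is handled by observing
$|G^\alpha(s)-G(s)|\le\frac{L}{\kappa}\,\E|X^\alpha_s-X_s|\le \frac{L}{\kappa}\,\bigl(\E|X^\alpha_s-X_s|^p\bigr)^{1/p}$
via Jensen's inequality, so it contributes the same type of integral.

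For the remainder group, each $I_j^\alpha$ has to be estimated directly, using the moment bound \eqref{eq1mm} from Lemma \ref{bd:2.4} to control $g$ and $\sigma$ uniformly in $\alpha$, so that what is left is essentially the time integral of the exponential kernel. Concretely: $|I_0^\alpha(t)|^p = y_0^p\lambda(t,\alpha(\kappa+\gamma))^p$, which is dominated by $T^{p/2}\lambda(t,\alpha(\kappa+\gamma))^{p/2}$; $I_1^\alpha$ and $I_2^\alpha$ after H\"older are bounded by $C\bigl(\int_0^t e^{\alpha(\kappa+\gamma)(s-t)}\,ds\bigr)^p = C\lambda(t,\alpha(\kappa+\gamma))^p \le C'\lambda(t,\alpha(\kappa+\gamma))^{p/2}$ (using $\lambda\le T$); $I_3^\alpha$ is treated via BDG, giving $C\bigl(\int_0^t e^{2\alpha(\kappa+\gamma)(s-t)}\,ds\bigr)^{p/2} \le C\lambda(t,2\alpha(\kappa+\gamma))^{p/2}\le C'\lambda(t,\alpha(\kappa+\gamma))^{p/2}$; and the two pieces of $I_4^\alpha$ contribute the term of order $\lambda(t,\alpha\kappa)^p$. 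For $I_2^\alpha$ it is convenient to first use the explicit rewriting \eqref{ptme}, after which two of the three pieces go into the $\lambda(t,\alpha(\kappa+\gamma))$ bucket and one into the $\lambda(t,\alpha\kappa)$ bucket. Collecting everything produces
\[
\E\Bigl[\sup_{0\le s\le t}|X^\alpha_s-X_s|^p\Bigr]\le C\bigl[\lambda(t,\alpha(\kappa+\gamma))^{p/2}+\lambda(t,\alpha\kappa)^p\bigr]+C\int_0^t\E\Bigl[\sup_{0\le u\le s}|X^\alpha_u-X_u|^p\Bigr]ds.
\]

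Finally, Gronwall's lemma applied on $[0,T]$ absorbs the integral term and gives the claimed estimate with a constant of the desired form. The main obstacle I anticipate is the careful bookkeeping for $I_2^\alpha$ and $I_4^\alpha$: because these involve the expectation $n^\alpha$ through $G^\alpha$, one has to unravel the mean-velocity identity \eqref{eqm1m}–\eqref{eqm1} to see precisely which exponential rate ($\alpha(\kappa+\gamma)$ or $\alpha\kappa$) each piece carries, and then match it with the correct power of $\lambda$. Everything else is a routine application of Minkowski, H\"older, BDG, the uniform moment bound \eqref{eq1mm}, and Gronwall.
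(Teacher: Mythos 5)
Your proposal follows essentially the same route as the paper: subtract the integral representations from Lemma \ref{bd:2.2}, split into the Lipschitz group (absorbed by Gronwall after Minkowski, H\"older, BDG and Jensen for the mean-field term) and the remainder terms $I_j^\alpha$, bound each $I_j^\alpha$ via the uniform moment bound \eqref{eq1mm} and the rewriting \eqref{ptme}, sort the exponential kernels into the $\lambda(t,\alpha(\kappa+\gamma))^{p/2}$ and $\lambda(t,\alpha\kappa)^p$ buckets, and conclude with Gronwall. The argument is correct and matches the paper's proof in all essential respects, including the use of the monotonicity of $a\mapsto\lambda(t,a)$ for the stochastic-integral term $I_3^\alpha$.
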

	\begin{proof}
From \eqref{eqn:2.2} and \eqref{eqn:2.3}, we have
		\begin{align*}
X_t^\alpha-X_t & =\dfrac{1}{\kappa+\gamma}\int_{0}^{t}(g(s,X^{\alpha}_s)-g(s,X_s))\ ds - \dfrac{\gamma}{\kappa(\kappa+\gamma)}\int_{0}^{t}( \E[g(t,X^{\alpha}_t)]- \E[g(t,X_t)]) ds \\
&\qquad + \dfrac{1}{\kappa+\gamma}\int_{0}^t(\sigma(s,X^\alpha_s)-\sigma(s,X_s))dW(s) + I_{0}^{\alpha}(t) + I_{1}^{\alpha}(t) - I_{2}^{\alpha}(t) - I_{3}^{\alpha}(t) - I_{4}^{\alpha}(t).
	\end{align*}
Similar to the proof of the previous lemma, by applying Minkowski's inequality \eqref{eq: Minkowski inequality} with $n=8$, H\"older's inequality \eqref{eq: Holder inequality}, the BDG inequality \eqref{eq: BDG inequality2} and Assumptions \ref{gt:2.1}, we get
			\begin{align*}
&\E\Big[\sup\limits_{0\leq s\leq t}|X_s^\alpha-X_s|^p\Big] 
\\&\qquad \leq 8^{p-1}\Bigg[ \dfrac{t^{p-1}}{(\kappa+\gamma)^p}\int_{0}^{t}E|g(s,X^{\alpha}_s)-g(s,X_s)|^p ds + \dfrac{\gamma^pt^{p-1}}{\kappa^p(\kappa+\gamma)^p}\int_{0}^{t}\Big| \E[g(t,X^{\alpha}_t)]- \E[g(t,X_t)]\Big|^p\ ds \\
		&\qquad\qquad+ \dfrac{C_p}{(\kappa+\gamma)^p}\E\left(\int_{0}^t|\sigma(s,X^\alpha_s)-\sigma(s,X_s)|^2ds\right)^{p/2}  + \sup\limits_{0\leq s\leq t}|I_{0}^{\alpha}(s)|^p + \E\Big(\sup\limits_{0\leq s\leq t}|I_{1}^{\alpha}(s)|^p\Big)  
\\&\qquad\qquad\qquad+\sup\limits_{0\leq s\leq t}|I_{2}^{\alpha}(s)|^p+ \E\Big(\sup\limits_{0\leq s\leq t}|I_{3}^{\alpha}(s)|^p\Big) + \sup\limits_{0\leq s\leq t}|I_{4}^{\alpha}(s)|^p\Bigg]
\\&\qquad\leq 8^{p-1}\bigg[ \dfrac{L^pt^{p-1}}{(\kappa+\gamma)^p}\int_{0}^{t}\E|X^{\alpha}_s-X_s|^p ds + \dfrac{L^p\gamma^pt^{p-1}}{\kappa^p(\kappa+\gamma)^p}\int_{0}^{t}\E|X^{\alpha}_s-X_s|^p ds \\
		&\qquad\qquad+ \dfrac{C_pL^pt^{\frac{p}{2}}}{(\kappa+\gamma)^p}\int_{0}^{t}\E|X^{\alpha}_s-X_s|^p ds  + \sup\limits_{0\leq s\leq t}|I_{0}^{\alpha}(s)|^p + \E\Big(\sup\limits_{0\leq s\leq t}|I_{1}^{\alpha}(s)|^p\Big)  
	\\&\qquad\qquad\qquad+\sup\limits_{0\leq s\leq t}|I_{2}^{\alpha}(s)|^p+ \E\Big(\sup\limits_{0\leq s\leq t}|I_{3}^{\alpha}(s)|^p\Big) + \sup\limits_{0\leq s\leq t}|I_{4}^{\alpha}(s)|^p\Bigg].
		\end{align*}
Next we estimate the terms $\E\Big(\sup\limits_{0\leq s\leq t}|I_{i}^{\alpha}(s)|^p\Big)$, $i=1,2,3,4$. We start with $\E\Big(\sup\limits_{0\leq s\leq t}|I_{1}^{\alpha}(s)|^p\Big)$. From definition of $I_1^\alpha(t)$ and Assumptions \ref{gt:2.1} and Minkowski's inequality \eqref{eq: Minkowski inequality} with $n=2$ we obtain that
\begin{align*}|I_{1}^{\alpha}(t)|^{p}&\leq  \dfrac{2^{p}K^p}{(\kappa+\gamma)^p}\left(\int_{0}^{t} \exp{[\alpha(\kappa+\gamma)(s-t)]}(1+|X^{\alpha}_s|)ds\right)^p\\&\leq 
\dfrac{2^{p}K^p}{(\kappa+\gamma)^p}\left(1+\sup_{0\leq t\leq T}|X^{\alpha}_t|^{p}\right)(\lambda(t;(\kappa+\gamma)\alpha))^{p}.
\end{align*} 
		This, together with  the fact that the function $t \mapsto \lambda(t,a)$ is increasing and Lemma \ref{bd:2.4}, implies
	\begin{align}\label{eq1m}\E\Big(\sup\limits_{0\leq s\leq t}|I_{1}^{\alpha}(s)|^p\Big)&\leq \dfrac{2^{p}K^p}{(\kappa+\gamma)^p}\left[1+\E\Big(\sup_{0\leq t\leq T}|X^{\alpha}_t|^{p}\Big)\right](\lambda(t;(\kappa+\gamma)\alpha))^{p}\notag\\&\leq C(\lambda(t;(\kappa+\gamma)\alpha))^{p},
	\end{align}
		where $C$ is a positive constant depending on $\{x_0,y_0,\kappa,\gamma,K,p,T\}$.
	Next, using \eqref{ptme} and Lemma \ref{bd:2.4}, we get
		\begin{equation*}
			\begin{split}
				|I_{2}^{\alpha}(t)| &\leq y_0\lambda(t;\alpha(\kappa+\gamma))+ \dfrac{K}{\kappa+\gamma}\int_{0}^{t}\exp{[\alpha\kappa(s-t)]}[1+\E(|X^{\alpha}_s|)]\ ds\\
				&\qquad+\dfrac{K}{\kappa+\gamma}\int_{0}^{t}\exp{[\alpha(\kappa+\gamma)(s-t)]}[1+\E(|X^{\alpha}_s|)]\ ds\\&
				\leq C\left[\lambda(t;\alpha(\kappa+\gamma)+\int_{0}^{t}\left(\exp{[\alpha\kappa(s-t)]}+\exp{[\alpha(\kappa+\gamma)(s-t)]}\right)ds\right] \\&
				\leq C\left[\lambda(t;\alpha(\kappa+\gamma)+\lambda(t,\alpha\kappa) \right],
					\end{split}
		\end{equation*}
		where $C$ is constant depending on $\{x_0,y_0,\kappa,\gamma,K,p\}$. Thus, \begin{equation}\label{eq2m}\sup\limits_{0\leq s\leq t}|I_{2}^{\alpha}(s)|^p \leq C \left[\left( \lambda(t,\alpha(\kappa+\gamma))\right)^p+\left( \lambda(t,\alpha\kappa)\right)^p \right]. 	\end{equation}
		
Applying the BDG inequality \eqref{eq: BDG}, H\"older's inequality \eqref{eq: Holder inequality} and Lemma \ref{bd:2.4},  one can derive that 
		\begin{align*}
&\E\Big(\sup\limits_{0\leq s\leq t}|I_{3}^{\alpha}(s)|^p\Big) \leq \dfrac{C_p}{(\kappa+\gamma)^p}E\left(\int_{0}^t\exp{[2\alpha(\kappa+\gamma)(s-t)]}| \sigma(s,X^\alpha_s)|^2ds\right)^{p/2}
\\&\quad\leq \dfrac{C_p}{(\kappa+\gamma)^p}\left(\int_{0}^t\exp{\left[\frac{p}{p-1}\alpha(\kappa+\gamma)(s-t)\right]}ds\right)^{p/2-1}\int_{0}^t\exp{\left[\frac{p}{2}\alpha(\kappa+\gamma)(s-t)\right]}\E|\sigma(s,X^\alpha_s)|^p\,ds
			\\&\quad\leq \dfrac{2^{p-1}C_p}{(\kappa+\gamma)^p}\left(\int_{0}^t\exp{\left[\frac{p}{p-1}\alpha(\kappa+\gamma)(s-t)\right]}ds\right)^{p/2-1}\int_{0}^t\exp{\left[\frac{p}{2}\alpha(\kappa+\gamma)(s-t)\right]}(1+\E|X^\alpha_s|^p)ds\\&\quad\leq
			C\left(\lambda(t,\frac{p}{p-1}\alpha(\kappa+\gamma))\right)^{p/2-1}\lambda(t,\frac{p}{2}\alpha(\kappa+\gamma)),
		\end{align*}
		where $C$ is constant depending on $\{x_0,y_0,\kappa,\gamma,K,p,T\}$.	On the other hand, for all $t>0$ and $a>0$, we have
$$
\frac{\partial \lambda(t,a)}{\partial a}=\dfrac{(1+at)e^{-at}-1}{a^2}< 0.
$$
Thus the function $a \mapsto \lambda(t,a)$ is decreasing.  Hence we get
		$$\E\Big(\sup\limits_{0\leq s\leq t}|I_{3}^{\alpha}(s)|^p\Big)
		\leq C\left( \lambda(t,\alpha(\kappa+\gamma))\right)^{p/2}.$$
		Now we consider $\sup\limits_{0\leq s\leq t}|I_{4}^{\alpha}(s)|^{p}$. Using Lemma \ref{bd:2.4} we can derive that
		\begin{equation*}
			\begin{split}
			\sup\limits_{0\leq s\leq t}|I_{4}^{\alpha}(s)| \leq \dfrac{\gamma}{\kappa+\gamma}\left[y_0\lambda(t;\alpha\kappa)+\dfrac{K}{\kappa}\int_{0}^{t}\exp{[\alpha(\kappa+\gamma)(u-t)]}\left[1+\E(|X^{\alpha}_u|)\right]\,du\right].
			\end{split}
		\end{equation*}
		 Thus, $$\sup\limits_{0\leq s\leq t}|I_{4}^{\alpha}(s)|^{p} \leq C \left[\left( \lambda(t,\alpha(\kappa+\gamma))\right)^p+\left( \lambda(t,\alpha\kappa)\right)^p \right],$$
		where $C$ is constant depending on $\{x_0,y_0,\kappa,\gamma,K,L,p\}$.
		From the above estimates, together with the fact that $\lambda(t,\alpha(\kappa+\gamma))\leq t\leq T,$ one sees that
		\begin{equation*}
			\begin{split}
				\E\Big[\sup\limits_{0\leq s\leq t}|X_s^\alpha-X_s|^p\Big] &\leq
				C\left[t^{2p-1}\int_{0}^{t}\E|X^{\alpha}_s-X_s|^{p}ds+\sum_{i=0}^{4}\E\Big(\sup\limits_{0\leq s\leq t}|I_{i}^{\alpha}(s)|^p\Big)\right]\\
				&\leq C\left[\int_{0}^{t}\E\Big[\sup\limits_{0\leq u\leq s}|X_u^\alpha-X_u|^p\Big]ds+\left( \lambda(t,\alpha(\kappa+\gamma))\right)^{\frac{p}{2}}+\left( \lambda(t,\alpha\kappa)\right)^p\right],\\
			\end{split}
		\end{equation*}
		where $C$ is constant depending only on $\{x_0,y_0,\kappa,\gamma,K,L,p\}$. Using Growwall's inequality, we obtain the claimed estimate and complete the proof.
	\end{proof}
In the following lemma, we show Malliavin differentiability of $X^\alpha_t$ and $X_t$.
	\begin{lemma}
		\label{bd:2.3}
		Under Assumptions \ref{gt:2.1}, the solutions $\{X^{\alpha}_t, t\in [0,T]\}$ and $\{X_t, t\in [0,T]\}$ of (\ref{eqn:2.2}) and (\ref{eqn:2.3}) respectively are Malliavin differentiable random variables. Moreover, the derivatives $D_rX^{\alpha}_t, D_rX_t$ satisfy $D_rX^{\alpha}_t= D_rX_t = 0$ for $r \geq t$ and for $0\leq r < t\leq T,$
		
		\begin{align}\label{eqn:2.4}
		D_rX^{\alpha}_t &=\dfrac{\sigma(r,X_r^\alpha)}{\kappa+\gamma}(\exp{[\alpha(\kappa+\gamma)(r-t)]}-1)-\dfrac{1}{\kappa+\gamma}\int_r^t\bar{g}^\alpha(s) D_rX_s^\alpha ds\notag
\\&\qquad+ \dfrac{1}{\kappa+\gamma}\int_r^t\bar{\sigma}^\alpha(s) D_rX_s^\alpha\, dW_s+\dfrac{1}{\kappa+\gamma}\int_{r}^{t}\exp{[\alpha(\kappa+\gamma)(s-t)]}\bar{g}^\alpha(s)D_rX_s^\alpha ds\notag
\\&\qquad+\dfrac{1}{\kappa+\gamma}\int_{r}^{t}\exp{[\alpha(\kappa+\gamma)(s-t)]}\bar{\sigma}^\alpha(s)D_rX_s^\alpha\, dW_s,
\\\notag\\		
D_rX_t &= \dfrac{\sigma(r,X_r)}{\kappa+\gamma}-\dfrac{1}{\kappa+\gamma}\int_r^t\bar{g}(s) D_rX_s ds+ \dfrac{1}{\kappa+\gamma}\int_r^t\bar{\sigma}(s) D_rX_s \, dW_s,\label{eqn:2.4m}
		\end{align}
		where $\bar{g}(s), \bar{g}^\alpha(s),\bar{\sigma}(s), \bar{\sigma}^\alpha(s)$ are adapted stochastic processes and are bounded by $L$.
	\end{lemma}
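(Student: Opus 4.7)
The plan is to establish Malliavin differentiability via the standard Picard iteration approach, following the strategy of \cite[Section 2.2]{nualartm2}, with minor modifications to handle the mean-field terms. Define iterates $X^{\alpha,(0)}_t := x_0$ and $X^{\alpha,(n+1)}_t := \Phi^\alpha(X^{\alpha,(n)})_t$, where $\Phi^\alpha$ denotes the operator obtained by inserting its argument into the right-hand side of \eqref{eqn:2.2} (with $n^\alpha$ replaced by $\E[Y^{\alpha,(n)}]$ recomputed at each step). A contraction argument based on Assumptions \ref{gt:2.1}, H\"older's inequality \eqref{eq: Holder inequality} and the BDG inequality \eqref{eq: BDG} yields $X^{\alpha,(n)}_t \to X^\alpha_t$ in $L^2(\Omega)$ uniformly in $t\in[0,T]$. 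The analogous construction applies to the limiting equation \eqref{eqn:2.3}.

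Next I would show by induction on $n$ that each $X^{\alpha,(n)}_t$ belongs to $\mathbb{D}^{1,2}$. The induction step uses: (a) the chain rule $D_r[g(s,X^{\alpha,(n)}_s)] = \bar{g}^{\alpha,(n)}(s)\,D_r X^{\alpha,(n)}_s$, where $\bar{g}^{\alpha,(n)}(s)$ is an adapted process bounded by $L$ arising as a Lipschitz difference quotient (an analogous statement holds for $\sigma$); (b) the fact that the expectations $\E[g(s,X^{\alpha,(n)}_s)]$ and $n^{\alpha,(n)}(s)=\E[Y^{\alpha,(n)}_s]$ are deterministic, so their Malliavin derivatives vanish and the $I_2^{\alpha}$ and $I_4^{\alpha}$ contributions drop out; (c) the formula $D_r\int_0^t H_s\,dW_s = H_r\,\ind_{\{r<t\}} + \int_r^t D_r H_s\,dW_s$, applied to $I_3^{\alpha}$ and to the direct $\sigma$-stochastic integral in \eqref{eqn:2.2} (the former contributes the boundary term $\frac{\sigma(r,X_r^\alpha)}{\kappa+\gamma}\exp[\alpha(\kappa+\gamma)(r-t)]$ and the latter contributes $\frac{\sigma(r,X_r^\alpha)}{\kappa+\gamma}$, which combine into $\frac{\sigma(r,X_r^\alpha)}{\kappa+\gamma}(\exp[\alpha(\kappa+\gamma)(r-t)]-1)$ as in \eqref{eqn:2.4}); and (d) the vanishing of the Malliavin derivative of the deterministic term $I_0^{\alpha}(t)$. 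This yields an explicit linear recursion for $D_r X^{\alpha,(n+1)}_t$ in terms of $D_r X^{\alpha,(n)}_s$.

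Third, I would establish a uniform Malliavin bound
\[
\sup_{n}\,\sup_{0\le t\le T}\E\!\int_0^T |D_r X^{\alpha,(n)}_t|^2\,dr \;\le\; C,
\]
by applying the BDG and H\"older inequalities together with Gronwall's lemma to the recursion, using the $L$-boundedness of $\bar{g}^{\alpha,(n)},\bar{\sigma}^{\alpha,(n)}$ and the exponentially decaying kernels $\exp[\alpha(\kappa+\gamma)(s-t)]$. Combined with the $L^2$ convergence of $X^{\alpha,(n)}_t$, the closability of $D$ (\cite[Proposition 1.2.1]{nualartm2}) yields $X^\alpha_t\in\mathbb{D}^{1,2}$, and (along a subsequence) $D_r X^{\alpha,(n)}_t \to D_r X^\alpha_t$ weakly in $L^2(\Omega\times[0,T])$. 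Passing to the limit in the recursion produces \eqref{eqn:2.4}, with $\bar{g}^\alpha,\bar{\sigma}^\alpha$ arising as weak-$*$ $L^\infty$ limits of the corresponding difference quotients and therefore still bounded by $L$. The equation \eqref{eqn:2.4m} follows by the same scheme applied to \eqref{eqn:2.3}, which is structurally simpler since it contains no exponential kernels.

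The main technical obstacle is justifying the chain rule for Malliavin derivatives when $g,\sigma$ are only Lipschitz rather than $C^1$: this is handled by a standard mollification argument, approximating $g,\sigma$ by smooth functions with derivatives uniformly bounded by $L$, applying the smooth chain rule to the mollified iterates, and extracting weak limits of the resulting adapted coefficient processes to obtain $\bar{g}^\alpha,\bar{\sigma}^\alpha$. The mean-field (McKean--Vlasov) structure, which might at first appear delicate, actually simplifies the analysis: since all interaction terms are expectations and hence deterministic, they contribute nothing to the Malliavin derivative, so \eqref{eqn:2.4} has the form of an ordinary (non-mean-field) linear SDE for $D_r X^\alpha_t$.
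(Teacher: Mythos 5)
Your proposal follows essentially the same route as the paper: the paper likewise collects the deterministic (mean-field and initial-data) terms into a single continuous function $f(t)$, runs the Picard iteration of \cite[Theorem 2.2.1]{nualartm2} on the resulting reduced equation \eqref{eqn:2.5}, and invokes the Lipschitz chain rule \cite[Proposition 1.2.4]{nualartm2} (which is precisely the content of your mollification step) to obtain the adapted coefficients $\bar{g}^\alpha,\bar{\sigma}^\alpha$ bounded by $L$ before applying $D$ to the integral equation. The only difference is presentational — you keep the expectation terms inside the iteration and note they are annihilated by $D$, whereas the paper strips them out up front — so the two arguments are the same in substance.
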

	\begin{proof}
		From the second equation of \eqref{eq1}  we have $$	Y^{\alpha}_{t} =y_0-\alpha(\kappa+\gamma)\int_0^tY^\alpha_sds- \alpha\int_0^tg(s,X^\alpha_{s})ds-\alpha\gamma \int_0^t\E(Y^\alpha_s)\,ds+\alpha \int_0^t\sigma(s,X^\alpha_{s})\,dW_{s}.$$
		Using  Minkowski's inequality \eqref{eq: Minkowski inequality} with $n=5$, H\"older's inequality \eqref{eq: Holder inequality} with $p=2$, the BDG inequality \eqref{eq: BDG}, Assumptions \ref{gt:2.1} and Lemma \ref{bd:2.4}  we get 
		 	\begin{align*}		\E(\sup\limits_{0\leq s\leq t}|Y^{\alpha}_s|^{2}) &\leq 5\bigg(y^2_0+t\alpha^2(\kappa+\gamma)^2\int_0^t\E|Y^\alpha_s|^2ds+ t\alpha^2\int_0^tg^2(s,X^\alpha_{s})\,ds-t\alpha^2\gamma^2 \int_0^t\E|Y^\alpha_s|^2\,ds
\\&\qquad\qquad+\alpha^2 \int_0^t\sigma^2(s,X^\alpha_{s})\,ds\bigg)\\&
		\leq C\left(1+\int_0^t\E(\sup\limits_{0\leq u\leq s}|Y^\alpha_u|^2)\,ds+ \int_0^t(1+\E|X^\alpha_{s}|^2)\,ds\right)
		\\&\leq C\left(1+\int_0^t\E(\sup\limits_{0\leq u\leq s}|Y^\alpha_u|^2)\,ds\right).
		 \end{align*}
		 Applying Gronwall's lemma we obtain
		 \begin{equation*}
		 \E\Big[\sup_{0\leq t\leq T}|Y^{\alpha}_t|^{2}\Big] \leq C.
		 \end{equation*}
	This, together with Lemma \ref{bd:2.4} we deduce that	 $\E[\sup\limits_{0\leq t \leq T}|Y^{\alpha}_t|]$ and $\E[\sup\limits_{0\leq t \leq T}|X^{\alpha}_t|]$  are bounded. Then, by  Assumptions \ref{gt:2.1} and the dominated convergence theorem,  the integrals $I_0^{\alpha}(t), I_2^{\alpha}(t)$ and $ I_4^{\alpha}(t)$ are  continuous functions.
		
Let us define 
$$
f(t):= x_0-\dfrac{\gamma}{\kappa+\gamma}\int_{0}^{t}G^{\alpha}(u)\ du + I_0^{\alpha}(t) -  I_2^{\alpha}(t) - I_4^{\alpha}(t).
$$
Then $f(t)$ is continuous function in $[0,T]$ and Equation \eqref{eqn:2.2} becomes
	\begin{equation}
	\label{eqn:2.5}
	\begin{split}
	X^{\alpha}_t &= f(t) +  \dfrac{1}{\kappa+\gamma}\int_{0}^{t}\left(\exp{[\alpha(\kappa+\gamma)(s-t)]}-1\right)g(s,X^{\alpha}_s)\ ds\\
	&\qquad+ \dfrac{1}{\kappa+\gamma}\int_{0}^{t} (\exp{[\alpha(\kappa+\gamma)(s-t)]}-1)\sigma(s,X_s)\ dW_s.
	\end{split}
	\end{equation}
	Now, we consider the Picard approximation sequence $\{X^{\alpha,n}_t,  t\in [0,T]\}_{n\geq 0}$ given by
		\begin{equation*}
		\begin{cases}
		X^{\alpha,0}_t = f(t),\\
		\begin{split}
		X^{\alpha,n+1}_t &= f(t) +  \dfrac{1}{\kappa+\gamma}\int_{0}^{t}\left(\exp{[\alpha(\kappa+\gamma)(s-t)]}-1\right)g(s,X^{\alpha,n}_s)\ ds\\
		&\quad +\dfrac{\delta}{\kappa+\gamma}\int_{0}^{t} (\exp{[\alpha(\kappa+\gamma)(s-t)]}-1)\sigma(s,X_s^{\alpha,n})\ dW_s,  \,t\in [0,T],\, n\geq 0.
		\end{split}
		\end{cases}
		\end{equation*}
		From this, using  the same method as in the proof of \cite[Theorem 2.2.1]{nualartm2}, we conclude that the solution $\{X^{\alpha}_t, t\in [0,T]\}$ of (\ref{eqn:2.5}) (thus, of (\ref{eqn:2.2})) is Malliavin's differentable.  Obviously, the solution $\{X^{\alpha}_t, t\in [0,T]\}$ is $\mathbb{F}$-adapted. Hence, we always have $D_\theta X^\alpha_t=0$ for $\theta >t.$ For $\theta\leq t,$  from \cite[Proposition 1.2.4]{nualartm2} and Lipschitz property of $g$ and $\sigma$, there exist adapted  processes $\bar{g}^\alpha(s)$, $\bar{\sigma}^\alpha(s)$ uniformly bounded by $L$ such that  $D_\theta g(s,X^\alpha_s)=\bar{g}^\alpha(s)D_\theta X^\alpha_s$ and   $D_\theta \sigma(s,X^\alpha_s)=\bar{\sigma}^\alpha(s)D_\theta X^\alpha_s$. Then we obtain \eqref{eqn:2.4} by applying  the operator $D$ to the equation \eqref{eqn:2.5}.
		
The proof for the solution $X_t$ of (\ref{eqn:2.3}) is similar. 
	\end{proof}
	
	\begin{rem}\label{rem1}
		If $g$ and $ \sigma$ are continuously differentiable, then $\bar{g}(s)=g'_2(s,X_s)$, $\bar{g}^\alpha(s)= g'_2(s,X_s^\alpha), \bar{\sigma}(s)= \sigma'_2(s,X_s)$ and $ \bar{\sigma}^\alpha(s)= \sigma'_2(s,X_s^\alpha)$. Here, for a function $h(t,x),$ we use the convention $h'_2(t,x)=\frac{\partial h(t,x)}{\partial x}.$
	\end{rem}
In the next lemma, we show that the moments of the Malliavin's derivative of solutions of \eqref{eqn:2.3} are bounded.	
	\begin{lemma}
		\label{bd:2.6}
		Let  $\{X_t, t\in [0,T]\}$ be the solution of \eqref{eqn:2.3} with Assumptions \ref{gt:2.1}. Then, for all $p \geq 2$,	we have
		\label{lm:2.5}
		\begin{equation*}
			\sup_{0\leq t,r\leq T}\E(|D_rX_t|^p) < \infty.
		\end{equation*}
	\end{lemma}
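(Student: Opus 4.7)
The plan is to treat \eqref{eqn:2.4m} as a linear stochastic integral equation for the process $t\mapsto D_rX_t$ (with $r\in[0,T]$ held fixed) and obtain the desired bound by a routine Gronwall argument, uniformly in $r$.

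First, I would record a moment bound for $X_t$ itself. The same argument as in Lemma \ref{bd:2.4}, applied to the simpler equation \eqref{eqn:2.3} (no $I_i^\alpha$ terms to handle), gives $\sup_{0\le t\le T}\E|X_t|^p\le C$. Combined with the linear growth assumption \ref{gt:2.1}(A), this yields
\begin{equation*}
\sup_{r\in[0,T]}\E|\sigma(r,X_r)|^p \;\le\; 2^{p-1}K^p\Bigl(1+\sup_{r\in[0,T]}\E|X_r|^p\Bigr) \;\le\; C,
\end{equation*}
which controls the initial term in \eqref{eqn:2.4m}.

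Next, fix $r\in[0,T]$ and define $u_r(t):=\E|D_rX_t|^p$ for $t\in[r,T]$. Applying Minkowski's inequality \eqref{eq: Minkowski inequality} with $n=3$ to \eqref{eqn:2.4m}, taking expectations, and using H\"older's inequality \eqref{eq: Holder inequality} on the drift integral together with the BDG inequality \eqref{eq: BDG} on the It\^o integral, while exploiting the uniform bound $|\bar g(s)|,|\bar\sigma(s)|\le L$ from Lemma \ref{bd:2.3}, I expect an estimate of the form
\begin{equation*}
u_r(t) \;\le\; \frac{3^{p-1}}{(\kappa+\gamma)^p}\E|\sigma(r,X_r)|^p + C_{p,L,T}\int_r^t u_r(s)\,ds,
\end{equation*}
where the constant $C_{p,L,T}$ absorbs the factor $(t-r)^{p-1}$ from H\"older's inequality and $C_p(t-r)^{p/2-1}$ from BDG (both bounded by a constant depending only on $T$ and $p$), and crucially does not depend on $r$. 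By the previous paragraph, the first term on the right-hand side is bounded by a constant $C_1$ independent of $r$.

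Gronwall's lemma then gives $u_r(t)\le C_1\exp(C_{p,L,T}\,T)$ for all $r\le t\le T$, and taking the supremum over $r,t\in[0,T]$ (using $D_rX_t=0$ for $r>t$) yields the claim. I do not anticipate any serious obstacle: the Malliavin-derivative equation \eqref{eqn:2.4m} is a \emph{linear} SDE with bounded coefficients, so the only ingredients needed beyond the standard BDG/H\"older/Minkowski/Gronwall package are the moment bound on $X_r$ (inherited from the Lemma \ref{bd:2.4}-type estimate) and the uniform boundedness of $\bar g,\bar\sigma$ already established in Lemma \ref{bd:2.3}.
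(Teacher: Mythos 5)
Your proposal is correct and follows essentially the same route as the paper: apply Minkowski's inequality with $n=3$ to \eqref{eqn:2.4m}, bound the initial term via the linear growth of $\sigma$ and a moment bound on $X$, use H\"older and BDG on the two integrals together with $|\bar g|,|\bar\sigma|\le L$, and close with Gronwall uniformly in $r$. If anything, you are slightly more careful than the paper in noting that the required moment bound is for the limiting process $X_t$ (obtained by rerunning the Lemma~\ref{bd:2.4} argument on \eqref{eqn:2.3}), whereas the paper cites Lemma~\ref{bd:2.4}, which is stated for $X^\alpha_t$.
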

	\begin{proof}
Using Minkowski's inequality \eqref{eq: Minkowski inequality} with $n=3$, H\"older's inequality \eqref{eq: Holder inequality}, the BDG inequality \eqref{eq: BDG}, Assumptions \ref{gt:2.1} and Lemma \ref{bd:2.4}, and noting that $|\bar{g}(s)|\leq L, |\bar{\sigma}(s)|\leq L$, it follows from \eqref{eqn:2.4m} that
	\begin{align*}	
&\E[|D_rX_t|^p] 
\\&\quad\leq 3^{p-1}\left(\dfrac{\E[|\sigma(r,X_r)|^p]}{(\kappa+\gamma)^p}-\dfrac{1}{(\kappa+\gamma)^p}\E\left(\int_r^t\bar{g}(s) D_rX_s\, ds\right)^p+ \dfrac{1}{(\kappa+\gamma)^p}\E\left(\int_r^t|\bar{\sigma}(s) D_rX_s|^2\, ds\right)^{p/2}\right)
\\&\quad\leq 3^{p-1}\left(\dfrac{K^p(1+\E[|X_t|^p])}{(\kappa+\gamma)^p}-\dfrac{L^p(t-r)^{p-1}}{(\kappa+\gamma)^p}\int_r^t \E[|D_rX_s|^p] \,ds+ \dfrac{L^p(t-r)^{p/2-1}}{(\kappa+\gamma)^p}\int_r^t \E[|D_rX_s|^p] ds\right)
\\&\quad
\leq C\left[1+\int_{r}^{t}|D_rX^{\alpha}(s)|^p \,ds\right],
\end{align*}
		where $C$ is a positive constant depending only on $\{\kappa,\gamma,K,L,p,T\}$.
		
		Taking the expectation and using Gronwall's inequality, we obtain the claimed estimate.
	\end{proof}
The following lemma provides an upper bound for the difference between the derivatives of the solutions	of \eqref{eqn:2.2} and \eqref{eqn:2.3}.
	\begin{lemma}
		\label{bd:2.7}
		Let  $\{X^\alpha_t, t\in [0,T]\}$ and $\{X_t, t\in [0,T]\}$ be respectively the solution of \eqref{eqn:2.2} and of \eqref{eqn:2.3} under Assumptions \ref{gt:2.1} and \ref{gt:2.2}. Then, for all $\alpha \geq 1$,
		\begin{equation*}
			\E\left[\|DX^{\alpha}_t-DX_t\|_{\mathcal{H}}^2 \right]\leq C(\lambda(t,\alpha(\kappa+\gamma))+\left( \lambda(t,\alpha\kappa)\right)^2),
		\end{equation*}
		where $C$ is a positive constant depending only on $\{x_0,y_0,\kappa,\gamma,K,L,M,T\}$.
	\end{lemma}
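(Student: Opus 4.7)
The plan is to subtract \eqref{eqn:2.4m} from \eqref{eqn:2.4} and organize the representation of $D_rX^{\alpha}_t - D_rX_t$ (for $0 \le r < t \le T$) into three types of pieces: (i) a \emph{boundary} contribution from the leading $\sigma(r,X_r^\alpha)(1-\exp[\alpha(\kappa+\gamma)(r-t)])/(\kappa+\gamma)$ in \eqref{eqn:2.4} against $\sigma(r,X_r)/(\kappa+\gamma)$ in \eqref{eqn:2.4m}, which splits into a Lipschitz remainder $(\sigma(r,X_r^\alpha)-\sigma(r,X_r))/(\kappa+\gamma)$ plus a small exponential term $-\sigma(r,X_r^\alpha)\exp[\alpha(\kappa+\gamma)(r-t)]/(\kappa+\gamma)$; (ii) two integrals in \eqref{eqn:2.4} weighted by $\exp[\alpha(\kappa+\gamma)(s-t)]$ (one Lebesgue, one It\^o) which localize near $s=t$; (iii) the two ``common'' integrals $\tfrac{1}{\kappa+\gamma}\int_r^t(\bar{g}^\alpha(s)D_rX_s^\alpha - \bar{g}(s)D_rX_s)\,ds$ and its It\^o analog.

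For (i), after squaring and integrating $r\in[0,t]$, the exponential piece contributes $\int_0^t \exp[2\alpha(\kappa+\gamma)(r-t)]\,dr = \tfrac12 \lambda(t,2\alpha(\kappa+\gamma)) \le \lambda(t,\alpha(\kappa+\gamma))$ by monotonicity of $a\mapsto\lambda(t,a)$, and the Lipschitz remainder is controlled by $\int_0^t \E|X_r^\alpha-X_r|^2\,dr$, which by Theorem \ref{bd:2.5} with $p=2$ is bounded by $C\cdot T\cdot(\lambda(t,\alpha(\kappa+\gamma))+\lambda(t,\alpha\kappa)^2)$ --- precisely the target rate. For (ii), H\"older's inequality \eqref{eq: Holder inequality} and the BDG inequality \eqref{eq: BDG}, together with $|\bar g^\alpha|,|\bar\sigma^\alpha|\le L$ and the analog of Lemma \ref{bd:2.6} applied to $X^\alpha_t$, reduce both terms (after integrating in $r$) to $\int_0^t \exp[2\alpha(\kappa+\gamma)(s-t)]\,ds = O(\lambda(t,\alpha(\kappa+\gamma)))$.

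For (iii), I use the standard splitting
\begin{equation*}
\bar{\sigma}^\alpha(s) D_r X_s^\alpha - \bar{\sigma}(s) D_r X_s = \bar{\sigma}^\alpha(s)\bigl(D_r X_s^\alpha - D_r X_s\bigr) + \bigl(\bar{\sigma}^\alpha(s) - \bar{\sigma}(s)\bigr) D_r X_s,
\end{equation*}
and similarly for $\bar g$. The first summand, together with $|\bar\sigma^\alpha|,|\bar g^\alpha|\le L$, feeds the Gronwall loop on $\phi(t):=\E\|DX_t^\alpha-DX_t\|_{\mathcal H}^2$ via $\int_0^t\phi(s)\,ds$. The second summand is where Assumption \ref{gt:2.2} is essential: by Remark \ref{rem1}, $\bar\sigma^\alpha(s)-\bar\sigma(s)=\sigma'_2(s,X_s^\alpha)-\sigma'_2(s,X_s)$, and since $\sigma'_2$ is $M$-Lipschitz (bounded second derivative), $|\bar\sigma^\alpha(s)-\bar\sigma(s)|\le M|X_s^\alpha-X_s|$. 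Combining this with Lemma \ref{bd:2.6} (uniform bound on moments of $D_rX_s$) and Theorem \ref{bd:2.5}, and applying H\"older and BDG to bound the $r$-integrals of the It\^o and Lebesgue integrals, yields a contribution of the desired order.

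Putting everything together produces an inequality of the form $\phi(t)\le C\bigl(\lambda(t,\alpha(\kappa+\gamma))+\lambda(t,\alpha\kappa)^2\bigr) + C\int_0^t\phi(s)\,ds$, after which Gronwall's lemma closes the estimate. The main obstacle is bookkeeping: there are five pieces on the $X^\alpha$ side versus three on the $X$ side, so care is needed to group the cross-terms so that every exponential weight either yields a $\lambda(t,\alpha(\kappa+\gamma))$-factor directly, or multiplies an $\E|X^\alpha-X|^2$ factor of the correct order. The role of Assumption \ref{gt:2.2}, which lifts Lipschitzness from $g,\sigma$ to $\bar g,\bar\sigma$, is the single nontrivial ingredient without which the ``input'' term in (iii) would lack a usable rate.
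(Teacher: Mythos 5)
Your proposal is correct and follows essentially the same route as the paper: the same decomposition of $D_rX^\alpha_t-D_rX_t$ into boundary, exponentially weighted, and common-integral pieces, the same splitting $\bar\sigma^\alpha D_rX^\alpha-\bar\sigma D_rX=\bar\sigma^\alpha(D_rX^\alpha-D_rX)+(\bar\sigma^\alpha-\bar\sigma)D_rX$ with Assumption \ref{gt:2.2} supplying $|\bar\sigma^\alpha-\bar\sigma|\le M|X^\alpha-X|$, the same inputs (Theorem \ref{bd:2.5} with $p=2,4$, Lemma \ref{bd:2.6}, H\"older/It\^o isometry), and the same Gronwall closure on $\phi(t)=\int_0^t\E|D_rX^\alpha_t-D_rX_t|^2\,dr$ after swapping the order of integration. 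Your remark that the exponentially weighted terms need the analog of Lemma \ref{bd:2.6} for $D_rX^\alpha_s$ (rather than $D_rX_s$) is in fact slightly more careful than the paper's own citation.
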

	\begin{proof}
Under Assumptions \ref{gt:2.2}, $g$ and $\sigma$ are twice differentiable, thus (see Remark \ref{rem1}) $\bar{g}(s)=g'_2(s,X_s)$, $\bar{g}^\alpha(s)= g'_2(s,X_s^\alpha), \bar{\sigma}(s)= \sigma'_2(s,X_s)$ and $ \bar{\sigma}^\alpha(s)= \sigma'_2(s,X_s^\alpha)$. Furthermore,
\begin{equation}
\label{eq:boundderivative}
|g'_2(s,X_s^\alpha)-g'_2(s,X_s)|\leq M|X_s^\alpha-X_s|,\quad |\sigma'_2(s,X_s^\alpha)-\sigma'_2(s,X_s|\leq M|X_s^\alpha-X_s|.
\end{equation}
From \eqref{eqn:2.4} and \eqref{eqn:2.4m} we have
		\begin{align}\label{eql3.4.1}
	D_rX^{\alpha}_t -D_rX_t &=\left(\dfrac{\sigma(r,X_r^\alpha)}{\kappa+\gamma}(\exp{[\alpha(\kappa+\gamma)(r-t)]}-1)-\dfrac{\sigma(r,X_r)}{\kappa+\gamma}\right)\notag
	\\&\qquad-\dfrac{1}{\kappa+\gamma}\int_r^t\left(g'_2(s,X_s^\alpha) D_rX_s^\alpha-g'_2(s,X_s) D_rX_s\right)\, ds\notag\\&\qquad+ \dfrac{1}{\kappa+\gamma}\int_r^t\left(\sigma'_2(s,X_s^\alpha) D_rX_s^\alpha-\sigma'_2(s,X_s) D_rX_s\right)\, dW_s\notag\\
	&\qquad+\dfrac{1}{\kappa+\gamma}\int_{r}^{t}\exp{[\alpha(\kappa+\gamma)(s-t)]}g'_2(s,X_s^\alpha)D_rX^{\alpha}_s \,ds\notag\\
	&\qquad+\dfrac{1}{\kappa+\gamma}\int_{r}^{t}\exp{[\alpha(\kappa+\gamma)(s-t)]}\sigma'_2(s,X_s)D_rX^{\alpha}_s\, dW_s.
	\end{align}
		Now, we shall estimate each term in the right hand side of \eqref{eql3.4.1}. First, using  Assumptions \ref{gt:2.1}, Lemma \ref{bd:2.4} and Theorem \ref{bd:2.5} for $p=2$, we can derive that 
		\begin{align*}
&\E\left(\dfrac{\sigma(r,X_r^\alpha)}{\kappa+\gamma}(\exp{[\alpha(\kappa+\gamma)(r-t)]}-1)-\dfrac{\sigma(r,X_r)}{\kappa+\gamma}\right)^2 
\\&\qquad\leq 2\left[\dfrac{\E\big[|\sigma(r,X_r^\alpha)-\sigma(r,X_r)|^2\big]}{(\kappa+\gamma)^2}+ \dfrac{\E\big[|\sigma(r,X_r^\alpha)|^2\big]}{(\kappa+\gamma)^2}\exp{[2\alpha(\kappa+\gamma)(r-t)]}\right]\\&
\qquad\leq 2\left[\dfrac{L^2\E|X_r^\alpha-X_r|^2}{(\kappa+\gamma)^2}+ \dfrac{2K^2(1+\E[|X_r^\alpha|^2])}{(\kappa+\gamma)^2}\exp{[2\alpha(\kappa+\gamma)(r-t)]}\right]
			\\
			&\qquad\leq C \left[ \lambda(t,\alpha(\kappa+\gamma))+\left( \lambda(t,\alpha\kappa)\right)^2 +\exp{[2\alpha(\kappa+\gamma)(r-t)]}\right],
		\end{align*}
		where $C$ is constant depending only on $\{x_0,y_0,\kappa,\gamma,K,L,T\}$.
	From  H\"older's inequality, Assumptions \ref{gt:2.1}, Lemma \ref{bd:2.5}, Lemma \ref{bd:2.6}  and Theorem \ref{bd:2.5} for $p=4$, together with \eqref{eq:boundderivative}  we get 
		\begin{align*}
{}&	\E\left(\int_r^t\left(g'_2(s,X_s^\alpha) D_rX_s^\alpha-g'_2(s,X_s) D_rX_s\right)\, ds\right)^2 
\\&\qquad\leq 2	\E\left(\int_r^t\left(g'_2(s,X_s^\alpha) -g'_2(s,X_s)\right) D_rX_s\, ds\right)^2+2\E\left(\int_r^tg'_2(s,X_s^\alpha)\left(D_rX_s^\alpha- D_rX_s\right)\, ds\right)^2
\\&\qquad\leq 2(t-r)\Bigg[\int_r^t\E|\left(g'_2(s,X_s^\alpha) -g'_2(s,X_s)\right) D_rX_s|^2\, ds
\\&\qquad\qquad\qquad+\int_r^t\E\left|g'_2(s,X_s^\alpha)\left(D_rX_s^\alpha- D_rX_s\right)\right|^2\, ds\Bigg]
	\\&\qquad\leq 2\M^2(t-r)\int_r^t\left(\E\left[|X_s^\alpha -X_s|^4\right]\right)^{1/2} \left(\E|D_rX_s|^4\right)^{1/2}\, ds
\\&\qquad\qquad\qquad+2L^2	(t-r)\int_r^t\E\left[|D_rX_s^\alpha- D_rX_s|^2\right]\, ds
	\\
	&\qquad\leq C \left(\lambda(t,\alpha(\kappa+\gamma))+\left( \lambda(t,\alpha\kappa)\right)^2 +\int_r^t\E\left[|D_rX_s^\alpha- D_rX_s|^2\right]\, ds\right).
	\end{align*}
By It\^o's isometry formula, H\"older's inequality, Assumptions \ref{gt:2.1}, Lemma \ref{bd:2.5}, Lemma \ref{bd:2.6} and Theorem \ref{bd:2.5} for $p=4$, together with \eqref{eq:boundderivative},  we have
		\begin{align*}
	{}&	\E\left(\int_r^t\left(\sigma'_2(s,X_s^\alpha) D_rX_s^\alpha-\sigma'_2(s,X_s) D_rX_s\right)\, dW_s\right)^2 
	\\&\qquad\leq 2	\E\left(\int_r^t\left(\sigma'_2(s,X_s^\alpha) -\sigma'_2(s,X_s)\right) D_rX_s\, dW_s\right)^2+2	\E\left(\int_r^t\sigma'_2(s,X_s^\alpha)\left(D_rX_s^\alpha- D_rX_s\right)\, dW_s\right)^2
	\\&\qquad\leq 2\int_r^t \E|\left(\sigma'_2(s,X_s^\alpha) -\sigma'_2(s,X_s)\right) D_rX_s|^2\, ds+2\int_r^t\E|\sigma'_2(s,X_s^\alpha)\left(D_rX_s^\alpha- D_rX_s\right)|^2\, ds
	\\
	&\qquad\leq 2\M^2\int_r^t\left(\E|X_s^\alpha -X_s|^4\right)^{1/2} \left(\E|D_rX_s|^4\right)^{1/2} ds+2L^2	\int_r^t\E|D_rX_s^\alpha- D_rX_s|^2 ds
	\\
	&\qquad\leq C \left[\lambda(t,\alpha(\kappa+\gamma))+\left( \lambda(t,\alpha\kappa)\right)^2 +\int_r^t\E\big[|D_rX_s^\alpha- D_rX_s|^2\big] ds\right].
	\end{align*}
Next, using  H\"older's inequality, Assumptions \ref{gt:2.1} and Lemma \ref{bd:2.6}  one sees that
		\begin{align*}
	&\E\left(\int_{r}^{t}\exp{[\alpha(\kappa+\gamma)(s-t)]}g'_2(s,X_s^\alpha)D_rX^{\alpha}_s ds\right)^2 
	\\&\qquad\leq (t-r)L^2\int_{r}^{t}\exp{[2\alpha(\kappa+\gamma)(s-t)]}\E\big[|D_rX^{\alpha}_s|^2\big] ds
	\\&\qquad\leq C\int_{r}^{t}\exp{[2\alpha(\kappa+\gamma)(s-t)]} ds
	\\&\qquad\leq C\lambda(t,\alpha(\kappa+\gamma)).
\end{align*}
	By the same estimate for the last term in the right hand side of \eqref{eql3.4.1}, we can obtain
		\begin{align*}
&\E\left(\int_{r}^{t}\exp{[\alpha(\kappa+\gamma)(s-t)]}\sigma'_2(s,X_s)D_rX^{\alpha}_s dW_s\right)^2 
\\&\qquad\leq L^2\int_{r}^{t}\exp{[2\alpha(\kappa+\gamma)(s-t)]}E\big[|D_rX^{\alpha}_s|^2\big]\, ds
\\&\qquad\leq C\int_{r}^{t}\exp{[2\alpha(\kappa+\gamma)(s-t)]} ds\\&\qquad\leq C\lambda(t,\alpha(\kappa+\gamma)).
	\end{align*}
		From the above estimates, together with the fact that the function $a \mapsto \lambda(t,a)$ is decreasing one can derive that
		\begin{equation*}
			\begin{split}
				&\int_{0}^{t}\E\left[|D_rX^{\alpha}_t-D_rX_t|^2\right]\ dr\\ 
				&\leq C\,\int_{0}^{t}\left(\lambda(t,\alpha(\kappa+\gamma))+\left( \lambda(t,\alpha\kappa)\right)^2+\exp{[2\alpha(\kappa+\gamma)(r-t)]}+\int_{r}^{t}\E\left[|D_rX^{\alpha}_u-D_rX_u|^2\right]\ du\right)\,dr\\
				&\leq C\,\left[\lambda(t,\alpha(\kappa+\gamma))+\left( \lambda(t,\alpha\kappa)\right)^2+\lambda(t,2\alpha(\kappa+\gamma))+\int_{0}^{t}dr\int_{r}^{t}\E\left[|D_rX_u-D_rX^{\alpha}_u|^2\right]\,du\right]\\
				&\leq C\,\left[\lambda(t,\alpha(\kappa+\gamma))+\left( \lambda(t,\alpha\kappa)\right)^2+\int_{0}^{t}du\int_{0}^{u}\E\left[|D_rX_u-D_rX^{\alpha}_u|^2\right]\,dr\right],
			\end{split}
		\end{equation*}
		where $C$ is constant depending only on $\{x_0,y_0,\kappa,\gamma,K,L,\M, T\}$.
		
		Let $\phi(t):= \displaystyle\int_{0}^{t}\E\left[|D_rX_t-D_rX^{\alpha}_t|^2\right]dr$, then we have 
		$$
		\phi(t) \leq C\left[\lambda(t,\alpha(\kappa+\gamma))+\left( \lambda(t,\alpha\kappa)\right)^2+\displaystyle\int_{0}^{t}\phi(u)\ du\right].
		$$
		Thus, applying Gronwall's inequality, we get 
		\begin{equation*}
			\phi(t) \leq C(\lambda(t,\alpha(\kappa+\gamma))+\left( \lambda(t,\alpha\kappa)\right)^2)\exp{(Ct)},
		\end{equation*}
		where $C$ is constant depending only on $\{x_0,y_0,\kappa,\gamma,K,L,\M,T\}$. This completes the proof of the lemma. 
	\end{proof}
Now, we give explicit bounds on the total variation distance between the solution $X^\alpha(t)$ of \eqref{eqn:2.2}
and the solution $X_t$ of \eqref{eqn:2.3}.
	\begin{theorem}
		\label{dl:2.1}
		Let $\{X^\alpha_t, t\in [0,T]\}$ and $\{X_t, t\in [0,T]\}$ be, respectively, the solution of \eqref{eqn:2.2}and of \eqref{eqn:2.3}  with Assumptions \ref{gt:2.1} and \ref{gt:2.2}. We further assume that $|\sigma(t,x)|\geq \sigma_0>0$ for all $(t,x)\in [0,T]\times \mathbb{R}.$  Then, for each $\alpha\geq 1$ and $t\in (0,T],$ 
		\begin{equation*}
			d_{TV}(X^{\alpha}_t,X_t) \leq C\sqrt{t^{-1}(\lambda(t,\alpha(\kappa+\gamma))+\left( \lambda(t,\alpha\kappa)\right)^2)},
		\end{equation*}
		where $C$ is a constant depending only on $\{x_0,y_0,\sigma_0, \kappa,\gamma,K,L,\,M,T\}$.
		
	\end{theorem}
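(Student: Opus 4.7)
The plan is to apply Lemma~\ref{bd:2.1} with $F_1 = X_t$ (the limiting process) and $F_2 = X^{\alpha}_t$, which immediately gives
\[
d_{TV}(X^{\alpha}_t, X_t) \leq \|X^{\alpha}_t - X_t\|_{1,2}\,\Bigl[3\bigl(\E\|D^2X_t\|^4_{\mathcal{H}\otimes\mathcal{H}}\bigr)^{1/4}\bigl(\E\|DX_t\|_{\mathcal{H}}^{-8}\bigr)^{1/4} + 2\bigl(\E\|DX_t\|_{\mathcal{H}}^{-2}\bigr)^{1/2}\Bigr].
\]
The benefit of putting the limit process in the ``regular'' slot $F_1$ is that the Malliavin moments appearing in the bracket will be $\alpha$-independent, so the entire $\alpha$-decay is packaged into the factor $\|X^{\alpha}_t - X_t\|_{1,2}$. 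Controlling this latter factor is routine: by definition $\|F\|_{1,2}^2 = \E|F|^2 + \E\|DF\|_{\mathcal{H}}^2$, so Theorem~\ref{bd:2.5} with $p=2$ handles the $L^2$-part, Lemma~\ref{bd:2.7} handles the derivative part, and both produce the same rate, yielding
\[
\|X^{\alpha}_t - X_t\|_{1,2} \leq C\sqrt{\lambda(t,\alpha(\kappa+\gamma)) + \lambda(t,\alpha\kappa)^2}.
\]

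The substantive work is bounding the bracket, and in particular the negative moments of $\|DX_t\|_{\mathcal{H}}$. Equation~\eqref{eqn:2.4m} is a linear SDE in $D_rX_t$ on $[r,T]$ with initial value $\sigma(r,X_r)/(\kappa+\gamma)$ at time $r$, so Dol\'eans-Dade supplies the closed form
\[
D_rX_t = \frac{\sigma(r,X_r)}{\kappa+\gamma}\,Z_{r,t},\quad Z_{r,t} := \exp\!\Bigl(-\tfrac{1}{\kappa+\gamma}\!\int_r^t\!\bar g(s)\,ds - \tfrac{1}{2(\kappa+\gamma)^2}\!\int_r^t\!\bar\sigma(s)^2\,ds + \tfrac{1}{\kappa+\gamma}\!\int_r^t\!\bar\sigma(s)\,dW_s\Bigr).
\]
Since $|\bar g|,|\bar\sigma|\leq L$, all moments of $Z_{r,t}^{\pm q}$ are bounded uniformly in $0\leq r\leq t\leq T$, because the stochastic integral inside the exponential is Gaussian-like. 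The non-degeneracy hypothesis $|\sigma|\geq \sigma_0$ then gives $\|DX_t\|_{\mathcal{H}}^2 \geq (\sigma_0^2/(\kappa+\gamma)^2)\int_0^t Z_{r,t}^2\,dr$, and applying Jensen's inequality to the convex function $x\mapsto x^{-p}$ converts this into $\E\|DX_t\|_{\mathcal{H}}^{-2p}\leq C\,t^{-p}$; in particular $(\E\|DX_t\|_{\mathcal{H}}^{-2})^{1/2} \leq Ct^{-1/2}$ and $(\E\|DX_t\|_{\mathcal{H}}^{-8})^{1/4} \leq Ct^{-1}$. For the second Malliavin derivative, I would differentiate \eqref{eqn:2.4m} once more (legitimate under Assumption~\ref{gt:2.2}, which bounds the second spatial derivatives of $g$ and $\sigma$) and use the moment bound of Lemma~\ref{bd:2.6} to obtain $\sup_{r,u\leq t\leq T}\E|D_uD_rX_t|^4 < \infty$; a Cauchy-Schwarz bound on the double integral then yields $(\E\|D^2X_t\|_{\mathcal{H}\otimes\mathcal{H}}^4)^{1/4} \leq Ct$. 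Assembling the pieces, the bracket is bounded by $3\,(Ct)(Ct^{-1}) + 2\,Ct^{-1/2} \leq Ct^{-1/2}$ on $(0,T]$.

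Multiplying the two factors produces exactly
\[
d_{TV}(X^{\alpha}_t, X_t) \leq C\,t^{-1/2}\sqrt{\lambda(t,\alpha(\kappa+\gamma)) + \lambda(t,\alpha\kappa)^2} = C\sqrt{t^{-1}\bigl(\lambda(t,\alpha(\kappa+\gamma)) + \lambda(t,\alpha\kappa)^2\bigr)},
\]
which is the claim. The main obstacle will be the negative moment step: turning the pointwise non-degeneracy $|\sigma|\geq \sigma_0$ into a quantitative lower bound for the integral of $Z_{r,t}^2$, and calibrating Jensen's inequality so that the resulting exponents match the ones ($-2$ and $-8$) demanded by Lemma~\ref{bd:2.1}, all while keeping the constants uniform in $\alpha$ and in $t$. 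The second-derivative bookkeeping is tedious but parallels the argument of Lemma~\ref{bd:2.7}.
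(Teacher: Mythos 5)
Your proposal is correct and follows the same architecture as the paper's proof: apply Lemma~\ref{bd:2.1} with $F_1=X_t$, bound $\|X^\alpha_t-X_t\|_{1,2}$ by Theorem~\ref{bd:2.5} and Lemma~\ref{bd:2.7}, write $D_rX_t$ in Dol\'eans--Dade closed form, and control $\E\|D^2X_t\|^4_{\mathcal{H}\otimes\mathcal{H}}\leq Ct^4$ by differentiating \eqref{eqn:2.4m} again and applying Gronwall. The only place you genuinely diverge is the negative-moment estimate: the paper lower-bounds $\int_0^t Z_{r,t}^2\,dr\geq t\exp\bigl(\tfrac{4}{\kappa+\gamma}\min_{s\le T}M_s\bigr)$ with $M_s=\int_0^s\sigma_2'(u,X_u)\,dB_u$, then invokes Dubins--Schwarz to time-change $M$ into a Brownian motion on $[0,L^2T]$ and Fernique's theorem for the exponential moment of its running minimum; you instead apply Jensen's inequality to $x\mapsto x^{-p}$ against the normalized measure $dr/t$, reducing everything to $\sup_{r\le t}\E\,Z_{r,t}^{-2p}<\infty$, which follows from the standard exponential-martingale bound $\E\exp(cM)\le \exp(c^2L^2T/2)$ since $|\bar\sigma|\le L$. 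Both routes yield $\E\|DX_t\|_{\mathcal{H}}^{-2p}\leq Ct^{-p}$ with constants uniform in $\alpha$; your Jensen argument is arguably the cleaner of the two, as it avoids the time-change and the passage to the running extremum. The remaining assembly ($3\cdot Ct\cdot Ct^{-1}+2Ct^{-1/2}\le Ct^{-1/2}$ on $(0,T]$) matches the paper exactly.
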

	\begin{proof}
		Lemma \ref{bd:2.1} gives us
		\begin{equation*}
			d_{TV}(X^{\alpha}_t,X_t) \leq \|X^{\alpha}_t-X_t\|_{1,2} \left[3\left(\E\|D^2X_t\|^4_{\mathcal{H}\bigotimes\mathcal{H}} \right)^{1/4}\left( \E\|DX_t\|_\mathcal{H}^{-8}\right)^{1/4}+ 2\left(\E\|DX_t\|_\mathcal{H}^{-2}\right)^{1/2}\right].
		\end{equation*}
		Thanks to Theorem \ref{bd:2.5} and Lemma \ref{bd:2.7}, we obtain
		\begin{align}
			\label{eqn:3.1}
			d_{TV}(X^{\alpha}_t,X_t) &\leq C\sqrt{(\lambda(t,\alpha(\kappa+\gamma))+\left( \lambda(t,\alpha\kappa)\right)^2)}\notag\\&\times\left[3\left(\E\|D^2X_t\|^4_{\mathcal{H}\bigotimes\mathcal{H}} \right)^{1/4}\left( \E\|DX_t\|_\mathcal{H}^{-8}\right)^{1/4}+ 2\left(\E\|DX_t\|_\mathcal{H}^{-2}\right)^{1/2}\right],
		\end{align}
		where $C$ is a constant depending only on $\{x_0,y_0,\kappa,\gamma,K,L,\M,T\}$.
		Now, from \eqref{eqn:2.4m}, one sees that
		$$
		D_\theta X_t=\dfrac{\sigma(\theta,X_\theta)}{\kappa+\gamma}
		\exp\bigg(\int_{\theta}^{t}\bigg(\dfrac{g_2'(s,X_s)}{\kappa+\gamma}-\frac{1}{2(\kappa+\gamma)^2}\sigma_2'(s,X_s)^2\bigg)ds +\dfrac{1}{\kappa+\gamma}\int_{\theta}^{t}\sigma_2'(s,X_s)dB_s\bigg),
		$$
		which implies that
		$$|D_\theta X_t|^2\geq \dfrac{\sigma_0^2}{(\kappa+\gamma)^2}e^{-(\frac{2L}{\kappa+\gamma}+\frac{L^2}{(\kappa+\gamma)^2})T}\exp\bigg(\dfrac{2}{\kappa+\gamma}\int_{\theta}^{t}\sigma_2'(s,X_s)dB_s\bigg),\quad  0\leq \theta\leq t\leq T. $$
		Define the stochastic process $M_t:=\int_{0}^{t}\sigma_2'(s,X_s)dB_s,0\leq t\leq T,$ one derives that
		\begin{align*}
		\|DX_t\|^2_{\mathcal{H}}\geq \int\limits_{0}^t|D_\theta X_t|^2d\theta& \geq \dfrac{\sigma_0^2}{(\kappa+\gamma)^2}e^{-(\frac{2L}{\kappa+\gamma}+\frac{L^2}{(\kappa+\gamma)^2})T}e^{\frac{2M_t}{\kappa+\gamma}}\int_0^te^{\frac{2M_\theta}{\kappa+\gamma}}d\theta\\&\geq t\dfrac{\sigma_0^2}{(\kappa+\gamma)^2}e^{-(\frac{2L}{\kappa+\gamma}+\frac{L^2}{(\kappa+\gamma)^2})T}e^{\frac{4}{\kappa+\gamma} \min\limits_{0\leq t\leq T}M_t},
		\end{align*}
for $0\leq t\leq T$. We observe that $M_t$ is a martingale with bounded quadratic variation. Indeed, $\langle M\rangle_t=\int_{0}^{t}\sigma_2'(s,X_s)^2ds\leq L^2T.$ So, by Dubins and Schwarz's theorem (see, e.g. Theorem 3.4.6 in \cite{Karatzas1991}) there exists a  Wiener process $(\widehat{W}_t)_{t\geq 0}$ such that $M_t=\widehat{W}_{\langle M\rangle_t}.$ Then, we arrive at the following
		\begin{align*}	\|DX_t\|^2_{\mathcal{H}}\geq t\dfrac{\sigma_0^2}{(\kappa+\gamma)^2}e^{-(\frac{2L}{\kappa+\gamma}+\frac{L^2}{(\kappa+\gamma)^2})T}e^{\frac{4}{\kappa+\gamma} \min\limits_{0\leq t\leq L^2T}\widehat{W}_t} ,\,\,0\leq t\leq T.
		\end{align*}
		This implies that, for each $r\geq 2$ and  $0< t\leq T,$ \begin{align*}\label{ptm4}\E\left(\|DX_t\|_\mathcal{H}^{-r}\right)\leq  t^{-r/2}\dfrac{(\kappa+\gamma)^r}{\sigma_0^r}e^{(\frac{rL}{\kappa+\gamma}+\frac{rL^2}{2(\kappa+\gamma)^2})T}\E\left[e^{\frac{2r}{\kappa+\gamma}\max\limits_{0\leq t\leq L^2T}(-\widehat{W}_t)}\right].
		\end{align*}
		On the other hand, by  Fernique's theorem, we always have $$\E\left[e^{\frac{2r}{\kappa+\gamma}\max\limits_{0\leq t\leq L^2T}(-\widehat{W}_t)}\right]<\infty.$$
	Hence
	\begin{equation}
\label{ptm4}\E\left(\|DX_t\|_\mathcal{H}^{-r}\right)\leq  Ct^{-r/2}.
	\end{equation}
	We observe that, from \eqref{eqn:2.4}, for $\gamma,\theta\leq t,$ under Assumptions \ref{gt:2.2},
		\begin{align*}
		D_{\gamma}D_{\theta}X_{t}
		&=\sigma'(\theta, X_\theta)D_\gamma X_\theta+\sigma'(\gamma,X_\theta)D_\theta X_\gamma \notag\\&
		\qquad+\int_{\theta\vee \gamma}^t\left(g''(s,X_s) D_\theta X_sD_\gamma X_s+g'(s,X_s)D_\gamma D_\theta X_s\right) ds\notag\\&
		\qquad+\int_{\theta\vee \gamma}^t\left(\sigma''(s,X_s)
		D_\theta X_sD_\gamma X_s+\sigma' (s,X_s)D_\gamma D_\theta X_s\right) dW_s,
		\end{align*}
		Now, using Minkowski's inequality \eqref{eq: Minkowski inequality} with $n=4$, H\"older's inequality \eqref{eq: Holder inequality}, the BDG inequality \eqref{eq: BDG}, Assumptions \ref{gt:2.1} and \ref{gt:2.2}, we can deduce
			\begin{align*}
	\E|D_{\gamma}D_{\theta}X_{t}|^4
		&\leq 64\Bigg[L^4\E|D_\gamma X_\theta|^4+L^4\E|D_\theta X_\gamma|^4 \notag\\&
		\qquad+8(t^3+C_4t)\int_{\theta\vee \gamma}^t\left(M^4 (\E|D_\theta X_s|^2)^2(\E|D_\gamma X_s|^2)^2+L^4\E|D_\gamma D_\theta X_s|^4\right) ds\Bigg].
		\end{align*}
		This, with together Lemma \ref{bd:2.6}, gives us 
		\begin{align*}
		\E\left[|D_{\gamma}D_{\theta}X_{t}|^{4}\right]\leq C+C\int_{\theta\vee \gamma}^t\E\big[|D_\gamma D_\theta X_s|^{4}\big]\,ds,
		\end{align*}
		where $C$ is a positive constant.	By Gronwall's inequality, we can verify that
		$$	\E\left[|D_{\gamma}D_{\theta}X_{t}|^{4}\right]\leq Ce^{C(t-\theta\vee \gamma)}\leq C \ \ \forall \ 0\leq \theta, \gamma\leq t\leq T.$$ 
		Therefore,
		\begin{align}\label{ptm3}\E\|D^2X_t\|^4_{\mathcal{H}\bigotimes\mathcal{H}}&\leq t^{2}\int_0^t\int_0^t\E\big[|D_{\gamma}D_{\theta}X_{t}|^{4}\big]\,d\theta\, d\gamma\leq t^{2}\int_0^t\int_0^tCd\theta d\gamma = Ct^{4},
		\end{align}
		where $C$ is a constant depending only on $\{x_0,y_0,\kappa,\gamma,K,L,T\}$. 
		
	Combining  \eqref{eqn:3.1},   \eqref{ptm4} and \eqref{ptm3}, we can conclude that 
		\begin{align*}
			d_{TV}(X^{\alpha}_t,X_t) \leq & C\sqrt{(\lambda(t,\alpha(\kappa+\gamma))+\left( \lambda(t,\alpha\kappa)\right)^2)}\times\left[C+ Ct^{-1/2}\right]\\&\leq
			C\sqrt{t^{-1}(\lambda(t,\alpha(\kappa+\gamma))+\left( \lambda(t,\alpha\kappa)\right)^2)}, 
		\end{align*}
		where $C$ is a constant depending only on $\{x_0,y_0,\sigma_0, \kappa,\gamma,K,L,\,M,T\}$. This completes the proof. 
	\end{proof}
From Theorem \ref{dl:2.1}, 	together with the fact that  for all $t>0$ and $a>0$, $\lambda(t,a)<\dfrac{1}{a}$, then we get the following Corollary.
\begin{corollary}
	\label{cor:2.1}
	Let $\{X^\alpha_t, t\in [0,T]\}$ and $\{X_t, t\in [0,T]\}$ be, respectively, the solution of \eqref{eqn:2.2} and of\eqref{eqn:2.3}  with Assumptions \ref{gt:2.1} and \ref{gt:2.2}. We further assume that $|\sigma(t,x)|\geq \sigma_0>0$ for all $(t,x)\in [0,T]\times \mathbb{R}.$  Then, for each $\alpha\geq 1$ and $t\in (0,T],$ 
	\begin{equation*}
	d_{TV}(X^{\alpha}_t,X_t) \leq  Ct^{-1/2}\alpha^{-1/2},
	\end{equation*}
	where $C$ is a constant depending only on $\{x_0,y_0,\sigma_0, \kappa,\gamma,K,L,\,M,T\}$.	
\end{corollary}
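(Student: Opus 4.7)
The plan is to deduce this corollary directly from Theorem \ref{dl:2.1}, using the elementary bound on the function $\lambda$ already hinted at in the text. Starting from
\begin{equation*}
d_{TV}(X^{\alpha}_t,X_t) \leq C\sqrt{t^{-1}\big(\lambda(t,\alpha(\kappa+\gamma))+\lambda(t,\alpha\kappa)^2\big)},
\end{equation*}
the observation I would use is that for every $a>0$ and $t>0$,
\begin{equation*}
\lambda(t,a)=\frac{1-e^{-at}}{a}\leq \frac{1}{a},
\end{equation*}
since $1-e^{-at}\leq 1$. Applied to the two occurrences of $\lambda$ in the right-hand side, this gives
$\lambda(t,\alpha(\kappa+\gamma))\leq \frac{1}{\alpha(\kappa+\gamma)}$ and $\lambda(t,\alpha\kappa)^2\leq \frac{1}{\alpha^2\kappa^2}$.

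From here I would use the hypothesis $\alpha\geq 1$, which implies $\alpha^{-2}\leq \alpha^{-1}$. Combining the two bounds yields
\begin{equation*}
\lambda(t,\alpha(\kappa+\gamma))+\lambda(t,\alpha\kappa)^2 \leq \frac{1}{\alpha(\kappa+\gamma)}+\frac{1}{\alpha^2\kappa^2}\leq \frac{C'}{\alpha},
\end{equation*}
for a constant $C'$ depending only on $\kappa,\gamma$. Plugging this into the estimate from Theorem \ref{dl:2.1} gives
\begin{equation*}
d_{TV}(X^\alpha_t,X_t)\leq C\sqrt{t^{-1}\cdot C'\alpha^{-1}} = Ct^{-1/2}\alpha^{-1/2},
\end{equation*}
after absorbing $\sqrt{C'}$ into the constant. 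There is no real obstacle here; the only care needed is to keep the constant dependence consistent with the one stated in Theorem \ref{dl:2.1}, namely on $\{x_0,y_0,\sigma_0,\kappa,\gamma,K,L,M,T\}$, which is preserved because the additional factor $\sqrt{C'}$ depends only on $\kappa$ and $\gamma$.
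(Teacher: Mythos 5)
Your proposal is correct and follows exactly the route the paper takes: it deduces the corollary from Theorem \ref{dl:2.1} via the elementary bound $\lambda(t,a)\leq 1/a$ together with $\alpha^{-2}\leq\alpha^{-1}$ for $\alpha\geq 1$, absorbing the resulting constants. The paper states this in a single sentence preceding the corollary, so your write-up is simply a more detailed version of the same argument.
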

	\subsection{Approximation the velocity and rescaled velocity processes}
	\label{sec: velocity process}
	In this section, we establish rates of convergence in $L^p$-distances and in the total variation distance for the velocity and rescaled velocity processes. We will discuss the re-scaled velocity process first since in this case, our results are applicable to more general settings where both external forces and diffusion coefficients can be dependent on both $x$ and $t$, i.e. $g=g(t,x)$ and $\sigma=\sigma(t,x)$.	
	\subsubsection{The re-scaled velocity process}
From the second equation of \eqref{eq1} we  can see that the process $Y^{\alpha}_{\frac{t}{\alpha}}$ satisfies
	\begin{equation}
		\label{eqn:2.10}
		\begin{cases}
			Y^{\alpha}_{\frac{t}{\alpha}} =y_0-(\kappa+\gamma)\int_0^tY^\alpha_{\frac{s}{\alpha}}ds- \int_0^tg(\frac{s}{\alpha},X^\alpha_{\frac{s}{\alpha}})ds-\gamma \int_0^tE(Y^\alpha_{\frac{s}{\alpha}})ds+\alpha \int_0^t\sigma(\frac{s}{\alpha},X^\alpha_{\frac{s}{\alpha}})dW_{\frac{s}{\alpha}}\\
			X^\alpha_0 = x_0.
		\end{cases}
	\end{equation}	
We recall the definition of the re-scaled velocity process introduced in the Introduction 
$$
\tilde{Y}^{\alpha}_t = \dfrac{1}{\sqrt{\alpha}}Y^{\alpha}_{\frac{t}{\alpha}}.
$$ 
Then $\tilde{Y}^{\alpha}_t $ satisfies \eqref{eqn:2.10m10}, that is
\begin{equation}
	\label{eqn:2.10m}
	\begin{cases}
	\tilde{Y}^{\alpha}_t =\dfrac{y_0}{\sqrt{\alpha}}-(\kappa+\gamma)\int_0^t\tilde{Y}^{\alpha}_sds- \dfrac{1}{\sqrt{\alpha}}\int_0^tg(\frac{s}{\alpha},X^\alpha_{\frac{s}{\alpha}})ds-\gamma \int_0^tE(\tilde{Y}^{\alpha}_{s})ds+\sqrt{\alpha} \int_0^t\sigma(\frac{s}{\alpha},X^\alpha_{\frac{s}{\alpha}})dW_{\frac{s}{\alpha}}\\
	X^\alpha(0) = x_0.
	\end{cases}
	\end{equation}
 Now, we put $\tilde{W}_t=\sqrt{\alpha}W_{t/\alpha}$, then $(\tilde{W}_t)_{t\geq 0}$ is a Brownian motion process and \eqref{eqn:2.10m} can be rewritten in the form 	
\begin{equation}
 \label{eqn:2.10m1}
 \begin{cases}
 \tilde{Y}^{\alpha}_t =\dfrac{y_0}{\sqrt{\alpha}}-(\kappa+\gamma)\int_0^t\tilde{Y}^{\alpha}_sds- \dfrac{1}{\sqrt{\alpha}}\int_0^tg(\frac{s}{\alpha},X^\alpha_{\frac{s}{\alpha}})ds-\gamma \int_0^tE(\tilde{Y}^{\alpha}_{s})ds+ \int_0^t\sigma(\frac{s}{\alpha},X^\alpha_{\frac{s}{\alpha}})d\tilde{W}_s\\
 X^\alpha_0 = x_0.
 \end{cases}
 \end{equation}
Our goal in this section is to study the rate of convergence in $L^p$-distance and in the total variation distance  between $\tilde{Y}^{\alpha}_t$ and $\tilde{Y}_t$. Here, $\tilde{Y}_t$ is the solution of Ornstein-Uhlembeck process \eqref{eqn:2.14m0}, which is
 \begin{equation}
 \label{eqn:2.14m}
 \begin{cases}
 d\tilde{Y}_t = -(\kappa+\gamma)d\tilde{Y}_t+\sigma(0,x_0)d\tilde{W}_t,\\
 \tilde{Y}(0) = 0.
 \end{cases}
 \end{equation}
First,  we obtain the rate of convergence in $L^p$-distances  between $\tilde{Y}^{\alpha}_t$ and $\tilde{Y}_t$ in the following lemma.
\begin{theorem}
	\label{bd:3.1}
	Let  $\{\tilde{Y}^\alpha_t, t\in [0,T]\}$ and $\{\tilde{Y}_t, t\in [0,T]\}$  be, respectively, the solution of \eqref{eqn:2.10m1} and of \eqref{eqn:2.14m} with Assumptions \ref{gt:2.1} and \ref{gt:2.3}. Then, for all $p \geq 2$ and $\alpha\geq 1$,
	\begin{equation*}
\E\left[	\sup_{0\leq t\leq T}|\tilde{Y}^\alpha_t-\tilde{Y}_t|^{p}\right] \leq \dfrac{C}{\alpha^{p/2}},
	\end{equation*}
	where $C$ is a positive constant depending on $p$  but not on $\alpha$.
\end{theorem}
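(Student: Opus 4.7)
The plan is to take the integral formulations of the rescaled velocity equation \eqref{eqn:2.10m1} and the Ornstein-Uhlenbeck limit \eqref{eqn:2.14m} and to subtract them. The difference process can be written as
\begin{align*}
\tilde{Y}^{\alpha}_t-\tilde{Y}_t &= \dfrac{y_0}{\sqrt{\alpha}} -(\kappa+\gamma)\int_0^t(\tilde{Y}^\alpha_s-\tilde{Y}_s)\,ds -\dfrac{1}{\sqrt{\alpha}}\int_0^t g\!\left(\tfrac{s}{\alpha},X^\alpha_{s/\alpha}\right)ds \\
&\qquad -\gamma\int_0^t \E[\tilde{Y}^\alpha_s]\,ds +\int_0^t \bigl[\sigma\!\left(\tfrac{s}{\alpha},X^\alpha_{s/\alpha}\right)-\sigma(0,x_0)\bigr]\,d\tilde{W}_s,
\end{align*}
where I used $\E[\tilde{Y}_s]=0$ for the OU limit. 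The goal is to bound the $L^p$-norm of the supremum of this expression by $C\alpha^{-p/2}$, then apply Gronwall's inequality.

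As a preliminary step, I would show that the mean term itself is $O(\alpha^{-1/2})$. Taking expectation in \eqref{eqn:2.10m1} kills the stochastic integral and yields a linear integral equation for $\E[\tilde{Y}^\alpha_t]$; using the linear growth of $g$ together with Lemma \ref{bd:2.4} (which bounds $\sup_{\alpha,t}\E|X^\alpha_t|^p$) gives $|\E[g(s/\alpha,X^\alpha_{s/\alpha})]|\leq C$, and a Gronwall argument then yields $\sup_{t\in[0,T]}|\E[\tilde{Y}^\alpha_t]|\leq C/\sqrt{\alpha}$. This handles the non-martingale mean-field contribution and also the deterministic $y_0/\sqrt{\alpha}$ and $g/\sqrt{\alpha}$ terms, each of which is manifestly $O(\alpha^{-1/2})$ after using Minkowski's inequality \eqref{eq: Minkowski inequality} and H\"older's inequality \eqref{eq: Holder inequality} together with the bound on the $p$-th moment of $X^\alpha$.

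The main work is the stochastic integral. Applying the BDG inequality \eqref{eq: BDG} gives
\[
\E\Bigl[\sup_{0\leq r\leq t}\Big|\int_0^r[\sigma(\tfrac{s}{\alpha},X^\alpha_{s/\alpha})-\sigma(0,x_0)]\,d\tilde{W}_s\Big|^p\Bigr] \leq C_p\, t^{p/2-1}\int_0^t \E\bigl|\sigma(\tfrac{s}{\alpha},X^\alpha_{s/\alpha})-\sigma(0,x_0)\bigr|^p ds.
\]
Here Assumption \ref{gt:2.3} is essential: it gives $|\sigma(s/\alpha,X^\alpha_{s/\alpha})-\sigma(0,x_0)|\leq L(s/\alpha+|X^\alpha_{s/\alpha}-x_0|)$, and then the second estimate \eqref{eq2mm} of Lemma \ref{bd:2.4} yields $\E|X^\alpha_{s/\alpha}-x_0|^p\leq C(s/\alpha)^{p/2}$. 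Combining these two bounds (and using $\alpha\geq 1$, $s\leq T$), the integrand is controlled by $C\alpha^{-p/2}$, so the entire stochastic integral contributes an $O(\alpha^{-p/2})$ term.

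Collecting all estimates, applying Minkowski's inequality with $n=5$ and using the Gronwall-type self-bound from the $(\kappa+\gamma)\int_0^t(\tilde{Y}^\alpha_s-\tilde{Y}_s)\,ds$ term (treated via $\sup$ and H\"older), I obtain
\[
\E\Bigl[\sup_{0\leq u\leq t}|\tilde{Y}^\alpha_u-\tilde{Y}_u|^p\Bigr]\leq \dfrac{C}{\alpha^{p/2}}+C\int_0^t \E\Bigl[\sup_{0\leq u\leq s}|\tilde{Y}^\alpha_u-\tilde{Y}_u|^p\Bigr]\,ds,
\]
and Gronwall's inequality closes the argument. The main obstacle is really bookkeeping rather than a deep new idea: one must simultaneously exploit the time-Lipschitz regularity from Assumption \ref{gt:2.3} (to handle the shift $s/\alpha\to 0$) and the $(s/\alpha)^{p/2}$ decay of $\E|X^\alpha_{s/\alpha}-x_0|^p$ from Lemma \ref{bd:2.4} (to handle the spatial shift $X^\alpha_{s/\alpha}\to x_0$), and verify that both contributions, as well as the preliminary mean estimate, give the same $\alpha^{-p/2}$ order.
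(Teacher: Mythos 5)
Your proposal is correct and follows essentially the same route as the paper: subtract the integral equations, use $\E[\tilde{Y}_s]=0$, control the stochastic integral via BDG together with Assumption \ref{gt:2.3} and the bound $\E|X^\alpha_{s/\alpha}-x_0|^p\leq C(s/\alpha)^{p/2}$ from Lemma \ref{bd:2.4}, and close with Gronwall. The only (harmless) deviation is your treatment of the mean-field term: you derive a standalone a priori bound $\sup_t|\E[\tilde{Y}^\alpha_t]|\leq C/\sqrt{\alpha}$ by taking expectations and applying Gronwall to the mean, whereas the paper simply writes $\gamma\int_0^t\E[\tilde{Y}^\alpha_s-\tilde{Y}_s]\,ds$ and absorbs $|\E[\tilde{Y}^\alpha_s-\tilde{Y}_s]|^p\leq\E|\tilde{Y}^\alpha_s-\tilde{Y}_s|^p$ into the Gronwall integrand; both are valid.
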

\begin{proof}
	From \eqref{eqn:2.10m1} and \eqref{eqn:2.14m}, together with  the fact that $\E[\tilde{Y}_t]=0$ for all $t\in [0,T]$, we get
	\begin{align*}
	\tilde{Y}^\alpha_t-\tilde{Y}_t & =\dfrac{y_0}{\sqrt{\alpha}}-(\kappa+\gamma)\int_0^t(\tilde{Y}^{\alpha}_s-\tilde{Y}_s)ds- \dfrac{1}{\sqrt{\alpha}}\int_0^tg(\frac{s}{\alpha},X^\alpha_{\frac{s}{\alpha}})ds \\
	&\qquad -\gamma \int_0^tE(\tilde{Y}^{\alpha}_{s}-\tilde{Y}_s)ds + \int_0^t(\sigma(\frac{s}{\alpha},X^\alpha_{\frac{s}{\alpha}})-\sigma(0,x_0))d\tilde{W}_s.
	\end{align*}
	Using Minkowski's inequality \eqref{eq: Minkowski inequality} with $n=5$, H\"older's inequality, the BDG inequality and Assumptions \ref{gt:2.1} and \ref{gt:2.3}, one can derive that
	\begin{align*}
	\E\left[\sup_{0\leq s\leq t}|\tilde{Y}^\alpha_s-\tilde{Y}_s|^{p}\right] & \leq 5^{p-1}\Big[\dfrac{|y_0|^p}{\alpha^{p/2}}+ t^{p-1}(\kappa+\gamma)^p\int_{0}^{t}\E\big[|\tilde{Y}^{\alpha}_s-\tilde{Y}_s|^p \big]\,ds 
\\&\qquad\quad+ \dfrac{K^p(2t)^{p-1}}{\alpha^{p/2}} \int_0^t(1+\E\big[|X^\alpha_{\frac{s}{\alpha}}|^p\big])\,ds+t^{p-1}\gamma^p\int_{0}^{t}\E\big[|\tilde{Y}^{\alpha}_s-\tilde{Y}_s|^p\big]\,ds
\\&\qquad\quad+ 2^{p-1}C_pt^{p/2-1}\int_{0}^t(\dfrac{s^p}{\alpha^p}+\E\big[|X^\alpha_{\frac{s}{\alpha}}-x_0|^p)\big]\,ds\Big].
	\end{align*}
By Lemma \ref{bd:2.4}, with noting that $0\leq \frac{s}{\alpha}\leq s\leq t\leq T$, we have
\begin{align*}
\E\left[\sup_{0\leq s\leq t}|\tilde{Y}^\alpha_s-\tilde{Y}_s|^{p}\right] &\leq  \dfrac{C}{\alpha^{p/2}} +C\int_0^t \E\left[|\tilde{Y}^{\alpha}_s-\tilde{Y}_s|^p\right]\,ds+ C\int_{0}^t(\dfrac{s^p}{\alpha^p}+\dfrac{s^{p/2}}{\alpha^{p/2}})ds
\\&\leq  \dfrac{C}{\alpha^{p/2}} +C\int_0^t \E\left[\sup_{0\leq u\leq s}|\tilde{Y}^\alpha_u-\tilde{Y}_u|^{p}\right]\, ds,
\end{align*}	
	where $C$ is constant depending only on $\{x_0,y_0,\kappa,\gamma,K,L,p\}$. Using Growwall's inequality, we obtain the claimed inequality and complete the proof.
\end{proof}
From \eqref{eqn:2.10m} and \eqref{eqn:2.14m}, under Assumptions \ref{gt:2.1} the  Malliavin differentiability of
the solutions $\tilde{Y}^\alpha_t$ and $\tilde{Y}_t$
 can be proved by using the same method as in the proof of Lemma \ref{bd:2.3}. Moreover, the Malliavin derivatives $D_\theta\tilde{Y}^\alpha_t$ and $D_\theta\tilde{Y}_t$ satisfy $D_r\tilde{Y}^{\alpha}_t= D_r\tilde{Y}_t = 0$ for $r \geq t/\alpha$ and $0\leq \alpha r < t\leq T,$	\begin{align}\label{eqn:2.4.m}
	D_r\tilde{Y}^{\alpha}_t &=\sqrt{\alpha}\sigma(r,X^\alpha_r)-(\kappa+\gamma)\int_{\alpha r}^tD_r\tilde{Y}^{\alpha}_sds- \dfrac{1}{\sqrt{\alpha}}\int_{\alpha r}^t\bar{g}(\frac{s}{\alpha})D_rX^\alpha_{\frac{s}{\alpha}}ds+\sqrt{\alpha} \int_{\alpha r}^t\bar{\sigma}(\frac{s}{\alpha})D_rX^\alpha_{\frac{s}{\alpha}}dW_{\frac{s}{\alpha}},\notag\\
D_r\tilde{Y}_t &= \sqrt{\alpha}\sigma(0,x_0)-(\kappa+\gamma)\int_{\alpha r}^tD_r\tilde{Y}_sds,
	\end{align}
	where $\bar{g}(s), \bar{g}^\alpha(s),\bar{\sigma}(s), \bar{\sigma}^\alpha(s)$ are adapted stochastic processes and bounded by $L.$ Furthermore, 	if $g$ and $ \sigma$ are continuously differentiable, then $\bar{g}(s)=g'_2(s,X_s)$, $\bar{g}^\alpha(s)= g'_2(s,X_s^\alpha), \bar{\sigma}(s)= \sigma'_2(s,X_s)$ and $ \bar{\sigma}^\alpha(s)= \sigma'_2(s,X_s^\alpha).$
	\begin{lemma}
	\label{bd:2.13}
Let $\{\tilde{Y}^{\alpha}_t, t\in[0,T]\}$ and  $\{\tilde{Y}_t, t\in[0,T]\}$ be defined as above. Assume that Assumptions \ref{gt:2.1} and \ref{gt:2.3} hold. Then, for all  $\alpha\geq 1$,
	\begin{equation*}
	\E\left[\|D\tilde{Y}^{\alpha}_t-D\tilde{Y}_t\|_{\mathcal{H}}^2\right] \leq C\alpha^{-1},
	\end{equation*}
	where $C$ is constant depending only on $\{x_0,y_0,\kappa,\gamma,K,L,T\}$.
\end{lemma}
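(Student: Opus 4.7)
The plan is to mimic the argument of Lemma \ref{bd:2.7}, but with careful attention to the time-rescaling $s \mapsto s/\alpha$ and to where the factor $\alpha^{-1}$ will appear. Subtracting the two expressions in \eqref{eqn:2.4.m}, one gets that, for $r \in [0, t/\alpha]$,
\begin{align*}
D_r \tilde Y^\alpha_t - D_r \tilde Y_t &= \sqrt\alpha\bigl[\sigma(r, X^\alpha_r) - \sigma(0, x_0)\bigr] - (\kappa+\gamma)\int_{\alpha r}^t \bigl[D_r \tilde Y^\alpha_s - D_r \tilde Y_s\bigr] ds \\
&\quad - \frac{1}{\sqrt\alpha}\int_{\alpha r}^t \bar g^\alpha(s/\alpha)\, D_r X^\alpha_{s/\alpha}\, ds + \sqrt\alpha \int_{\alpha r}^t \bar\sigma^\alpha(s/\alpha)\, D_r X^\alpha_{s/\alpha}\, dW_{s/\alpha},
\end{align*}
and the difference vanishes for $r > t/\alpha$. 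Squaring, applying Minkowski's inequality \eqref{eq: Minkowski inequality} with $n=4$, and taking expectations yields an estimate of the form $\E|D_r \tilde Y^\alpha_t - D_r \tilde Y_t|^2 \leq 4[H(r,t) + (\kappa+\gamma)^2 T \int_{\alpha r}^t \E|D_r \tilde Y^\alpha_s - D_r \tilde Y_s|^2 ds]$, where $H(r,t)$ aggregates the remaining three pieces and contains no self-reference.

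For the first piece of $H$, Assumption \ref{gt:2.3} and Lemma \ref{bd:2.4} give $\alpha \E|\sigma(r, X^\alpha_r) - \sigma(0, x_0)|^2 \leq 2\alpha L^2(r^2 + \E|X^\alpha_r - x_0|^2) \leq C\alpha(r^2 + r)$; crucially, this bound is proportional to $r$ for small $r$. For the two $D_r X^\alpha$ pieces, one uses the uniform-in-$\alpha$ estimate $\sup_{0\leq r,u \leq T}\E|D_r X^\alpha_u|^2 \leq C$, proved by exactly the Gronwall argument of Lemma \ref{bd:2.6} applied to the equation \eqref{eqn:2.4} (whose coefficients are bounded by $L$ uniformly in $\alpha$). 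H\"older's inequality controls the $ds$-integral by $L^2(t-\alpha r)^2\cdot C/\alpha$, and It\^o's isometry controls the Brownian one by $\alpha L^2 C (t/\alpha - r) = L^2 C(t - \alpha r)$. Applying Gronwall's lemma in the variable $t$ with $r$ held fixed then gives, uniformly for $r \in [0, t/\alpha]$,
\begin{equation*}
\E|D_r \tilde Y^\alpha_t - D_r \tilde Y_t|^2 \;\leq\; C\bigl[\alpha(r^2 + r) + \alpha^{-1} + (t - \alpha r)\bigr].
\end{equation*}

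The final step is to integrate over $r$: since the derivatives vanish for $r > t/\alpha$,
\begin{equation*}
\E\bigl[\|D\tilde Y^\alpha_t - D\tilde Y_t\|_{\mathcal{H}}^2\bigr] = \int_0^{t/\alpha} \E|D_r \tilde Y^\alpha_t - D_r \tilde Y_t|^2\, dr,
\end{equation*}
and the short interval of length $t/\alpha$ is precisely what converts each term into an $O(\alpha^{-1})$ quantity. Explicitly, $\int_0^{t/\alpha} \alpha(r^2 + r)\, dr = O(\alpha^{-1})$, $\int_0^{t/\alpha} \alpha^{-1}\, dr = O(\alpha^{-2})$, and $\int_0^{t/\alpha}(t-\alpha r)\, dr = O(\alpha^{-1})$, summing to the desired $C\alpha^{-1}$.

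The main subtlety, compared with Lemma \ref{bd:2.7}, is that here the decisive factor $\alpha^{-1}$ does \emph{not} come from exponentially decaying weights $\lambda(t, \alpha(\kappa+\gamma))$ inside the integrand (the integrand itself is in fact only $O(1)$ in $\alpha$ because of the $\sqrt\alpha$ prefactor on the stochastic integral), but from the shrinking domain $[0, t/\alpha]$ of the $\mathcal{H}$-integral. The care one must take is therefore to avoid any estimate on $\E|D_r \tilde Y^\alpha_t - D_r \tilde Y_t|^2$ that introduces a factor blowing up faster than $\alpha$; the boundary term $\sqrt\alpha[\sigma(r,X^\alpha_r)-\sigma(0,x_0)]$ satisfies this because its Lipschitz control produces the extra $r^2 + r$ factor that compensates the explicit $\alpha$.
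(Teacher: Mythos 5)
Your proposal is correct and follows essentially the same route as the paper: the same decomposition of $D_r\tilde Y^\alpha_t-D_r\tilde Y_t$ from \eqref{eqn:2.4.m}, the same term-by-term bounds (the Lipschitz control $C\alpha(r^2+r)$ on the boundary term via \eqref{eq2mm}, the It\^o-isometry bound $C(t-\alpha r)$ on the rescaled stochastic integral, and the uniform bound on $\E|D_rX^\alpha_u|^2$), and the same final observation that integrating over the shrinking domain $[0,t/\alpha]$ produces the factor $\alpha^{-1}$. The only (immaterial) difference is that you apply Gronwall pointwise in $t$ for fixed $r$ before integrating in $r$, whereas the paper first integrates in $r$, swaps the order of integration, and applies Gronwall to $\phi(t)=\int_0^{t/\alpha}\E|D_r\tilde Y^\alpha_t-D_r\tilde Y_t|^2\,dr$; both are valid and yield the same bound.
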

\begin{proof}
It follows from \eqref{eqn:2.4.m} that, for  $0\leq \alpha r < t,$ 
	\begin{align*}
D\tilde{Y}^{\alpha}_t-D\tilde{Y}_t &=\sqrt{\alpha}\left(\sigma(r,X^\alpha_r)-\sigma(0,x_0)\right) -(\kappa+\gamma)\int_{\alpha r}^{t} \left[D\tilde{Y}^{\alpha}_s-D\tilde{Y}_s\right]ds\\&\qquad -\dfrac{1}{\sqrt{\alpha}}\int_{\alpha r}^t\bar{g}(\frac{s}{\alpha})D_rX^\alpha_{\frac{s}{\alpha}}ds+\sqrt{\alpha} \int_{\alpha r}^t\bar{\sigma}(\frac{s}{\alpha})D_rX^\alpha_{\frac{s}{\alpha}}dW_{\frac{s}{\alpha}}.
	\end{align*}
Using Cauchy-Schwarz inequality, the  It\^o isometry formula,  Assumptions \ref{gt:2.1} and \ref{gt:2.3}, Lemma \ref{bd:2.4}, with noting that $0\leq \frac{s}{\alpha}\leq s\leq t\leq T$, we get 
	\begin{align*}
	\E\left[|D_r\tilde{Y}^{\alpha}_t-D_r\tilde{Y}_t|^2\right] &\leq 4\Big(2\alpha\left(r^2+\E\left[|X^\alpha_r-x_0|^2\right]\right) +(\kappa+\gamma)^2(t-\alpha r)\int_{\alpha r}^{t} \E\Big[\big|D_r\tilde{Y}^{\alpha}_s-D_r\tilde{Y}_s\big|^2\Big]\,ds\\
	&\qquad +\dfrac{L^2T}{\alpha}\int_{\alpha r}^t\E\big[|D_rX^\alpha_{\frac{s}{\alpha}}|^2\big]\,ds+L^2\alpha \int_{\alpha r}^t\E\big[|D_rX^\alpha_{\frac{s}{\alpha}}|^2\big]\frac{ds}{\alpha}\Big)\\&\leq C\Big(\alpha\left(r^2+r\right) +\int_{\alpha r}^{t} \E\Big[\big|D_r\tilde{Y}^{\alpha}_s-D_r\tilde{Y}_s\big|^2\Big]\,ds+\dfrac{(t-\alpha r)}{\alpha}+(t-\alpha r)\Big),
	\end{align*}
	where $C$ is constant depending only on $\{x_0,y_0,\kappa,\gamma,K,L,T\}$. From this, we have
\begin{align*}
&\int_{0}^{t/\alpha}\E\big[|D_r\tilde{Y}^{\alpha}_t-D_r\tilde{Y}_t|^2\big]\,dr
\\&\qquad\leq C\Big(\alpha\left(\frac{t^3}{3\alpha^3}+\frac{t^2}{2\alpha^2}\right) +	\int_{0}^{t/\alpha}\left(\int_{\alpha r}^{t} \E\Big[\big|D_r\tilde{Y}^{\alpha}_s-D_r\tilde{Y}_s\big|^2\Big]\,ds\right)dr+\dfrac{t^2}{2\alpha^2}+\dfrac{t^2}{2\alpha}\Big)\\
\\&\qquad\leq \dfrac{C}{\alpha} +	C\int_{0}^{t}\left(\int_{0}^{s/\alpha} E\left|D_r\tilde{Y}^{\alpha}_s-D_r\tilde{Y}_s\right|^2dr\right)ds.
\end{align*}
	Denote $\phi(t) = \int_{0}^{t/\alpha}E|D_r\tilde{Y}^{\alpha}_t-D_r\tilde{Y}_t|^2dr$, 
	using Gronwall's inequality, one sees easily that
	$$\E\Big[\|D\tilde{Y}^{\alpha}_t-D\tilde{Y}_t\|_{\mathcal{H}}^2\Big]=\phi(t)\leq \dfrac{C}{\alpha} e^{Ct} \leq C\alpha^{-1},$$
		where $C$ is a constant depending only on $\{x_0,y_0,\kappa,\gamma,K,L,T\}$. This completes our proof. 
\end{proof}
Bringing the above lemmas together, we can get the following result.
	\begin{theorem}
	\label{thm: TV for velocity}
	Let $\{\tilde{Y}^{\alpha}_t, t\in[0,T]\}$ and  $\{\tilde{Y}_t, t\in[0,T]\}$ be as before. Assume that Assumptions \ref{gt:2.1} and \ref{gt:2.3} hold and $|\sigma(0,x_0)|>0$ for all $(t,x)\in [0,T]\times \mathbb{R}.$ Then, for each $t\in (0,T],$ 
	\begin{equation*}
	d_{TV}(\tilde{Y}^{\alpha}_t,\tilde{Y}_t) \leq C\left(\lambda(t,2(\kappa+\gamma))\right)^{-1/2}\alpha^{-1/2},
	\end{equation*}
	where $C$ is a constant depending only on $\{x_0,y_0,\kappa,\gamma,K,L,T, \sigma(0,x_0)\}$.
\end{theorem}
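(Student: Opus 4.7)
The plan is to apply Lemma \ref{bd:2.1} with $F_1 = \tilde{Y}_t$ and $F_2 = \tilde{Y}^\alpha_t$, following exactly the template of Theorem \ref{dl:2.1}, and then to estimate each of the three ingredients on the right-hand side of \eqref{pt2}. The key simplification compared with the displacement case is that the limiting process $\tilde{Y}_t$ is an Ornstein--Uhlenbeck process with constant coefficients, so its Malliavin derivative is fully explicit.

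First, I would bound $\|\tilde{Y}^\alpha_t - \tilde{Y}_t\|_{1,2}$ by combining Theorem \ref{bd:3.1} with $p=2$, which gives $\E|\tilde{Y}^\alpha_t - \tilde{Y}_t|^2 \leq C\alpha^{-1}$, and Lemma \ref{bd:2.13}, which gives $\E\|D\tilde{Y}^\alpha_t - D\tilde{Y}_t\|_\mathcal{H}^2 \leq C\alpha^{-1}$. Adding these two contributions yields
$$\|\tilde{Y}^\alpha_t - \tilde{Y}_t\|_{1,2} \leq C\alpha^{-1/2}.$$

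Next, I would compute $D\tilde{Y}_t$ and $D^2\tilde{Y}_t$ explicitly from the linear SDE \eqref{eqn:2.14m}. The second formula of \eqref{eqn:2.4.m} admits the closed-form solution
$$D_r \tilde{Y}_t = \sqrt{\alpha}\,\sigma(0,x_0)\,\exp\bigl[-(\kappa+\gamma)(t-\alpha r)\bigr], \qquad 0 \leq r \leq t/\alpha,$$
and zero otherwise. This quantity is deterministic, so after the change of variables $u = \alpha r$,
$$\|D\tilde{Y}_t\|_\mathcal{H}^2 \;=\; \sigma(0,x_0)^2 \int_0^t e^{-2(\kappa+\gamma)(t-u)}\,du \;=\; \sigma(0,x_0)^2\,\lambda\bigl(t, 2(\kappa+\gamma)\bigr) \;>\; 0$$
for every $t>0$, using the assumption $\sigma(0,x_0) \neq 0$. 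Consequently $\E\|D\tilde{Y}_t\|_\mathcal{H}^{-r}$ is finite for every $r > 0$ and bounded by a constant (depending on $\sigma(0,x_0)$) times $\lambda(t, 2(\kappa+\gamma))^{-r/2}$. Moreover, since the above expression for $D_r\tilde{Y}_t$ is deterministic, a second application of the Malliavin derivative gives $D^2\tilde{Y}_t = 0$ almost surely, so the first term inside the brackets of \eqref{pt2} vanishes.

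Substituting these three estimates into Lemma \ref{bd:2.1}, the bound collapses to
$$d_{TV}(\tilde{Y}^\alpha_t,\tilde{Y}_t) \;\leq\; 2\,\|\tilde{Y}^\alpha_t - \tilde{Y}_t\|_{1,2}\,\bigl(\E\|D\tilde{Y}_t\|_\mathcal{H}^{-2}\bigr)^{1/2} \;\leq\; C\,\alpha^{-1/2}\,\lambda\bigl(t, 2(\kappa+\gamma)\bigr)^{-1/2},$$
which is the claimed estimate. Because the limiting process is Gaussian and both $D\tilde{Y}_t$ and $D^2\tilde{Y}_t$ are explicit, I expect no serious technical obstacle; the only mildly delicate step is the closed-form computation of the Malliavin derivative of the Ornstein--Uhlenbeck limit, from which the nondegeneracy factor $\lambda(t,2(\kappa+\gamma))^{-1/2}$ naturally emerges as the source of the $t$-dependence on the right-hand side.
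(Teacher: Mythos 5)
Your proposal is correct and follows essentially the same route as the paper: apply Lemma \ref{bd:2.1} with $F_1=\tilde{Y}_t$ and $F_2=\tilde{Y}^\alpha_t$, bound $\|\tilde{Y}^\alpha_t-\tilde{Y}_t\|_{1,2}$ via Theorem \ref{bd:3.1} and Lemma \ref{bd:2.13}, and exploit the explicit deterministic Malliavin derivative of the Ornstein--Uhlenbeck limit to get $\|D\tilde{Y}_t\|_{\mathcal{H}}^2=\sigma(0,x_0)^2\,\lambda(t,2(\kappa+\gamma))$ together with $D^2\tilde{Y}_t=0$. Your intermediate bound $\|\tilde{Y}^\alpha_t-\tilde{Y}_t\|_{1,2}\leq C\alpha^{-1/2}$ is in fact the correct one; the paper's display \eqref{eqn:3.4} shows $C\alpha^{-1}$, which is a typo that your version fixes.
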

\begin{proof}
Using Lemma \ref{bd:2.1} we have
\begin{equation*}
	d_{TV}(\tilde{Y}^{\alpha}_t,\tilde{Y}_t) \leq \|\tilde{Y}^{\alpha}_t-\tilde{Y}_t\|_{1,2}\left[3\left(\E\|D^2\tilde{Y}_t\|^4_{\mathcal{H}\bigotimes\mathcal{H}} \right)^{1/4}\left( \E\|D\tilde{Y}_t\|_\mathcal{H}^{-8}\right)^{1/4}+ 2\left(\E\|D\tilde{Y}_t\|_\mathcal{H}^{-2}\right)^{1/2}\right].
	\end{equation*}
	Thanks to Theorem \ref{bd:3.1} and Lemma \ref{bd:2.13}, we obtain
	\begin{equation}
	\label{eqn:3.4}
	d_{TV}(\tilde{Y}^{\alpha}_t,\tilde{Y}_t) \leq C\alpha^{-1}\left[3\left(\E\|D^2\tilde{Y}_t\|^4_{\mathcal{H}\bigotimes\mathcal{H}} \right)^{1/4}\left( \E\|D\tilde{Y}_t\|_\mathcal{H}^{-8}\right)^{1/4}+ 2\left(\E\|D\tilde{Y}_t\|_\mathcal{H}^{-2}\right)^{1/2}\right].
	\end{equation}
	Moreover, we have  $D_r\tilde{Y}_t = 0$ for $r \geq t/\alpha$ and $D_r\tilde{Y}_t = \sqrt{\alpha}\sigma(0,x_0)-(\kappa+\gamma)\int_{\alpha r}^tD_r\tilde{Y}_sds,$ for $0\leq \alpha r < t.$ Solving this ODE directly yields
	\begin{equation}\label{pt}
	D_r\tilde{Y}_t = \begin{cases}
	0 \quad &\text{if }r\alpha \geq t\\
	\sqrt{\alpha}\sigma(0,x_0)\exp{[(\kappa+\gamma)(r\alpha-t)]} \quad &\text{if }r\alpha < t.
	\end{cases}
	\end{equation}
	Thus, one can easily show that
	\begin{equation*}
	\|D\tilde{Y}_t\|^{2}_{\mathcal{H}} = \alpha\sigma^2(0,x_0)\int_{0}^{t/\alpha}\exp{[2(\kappa+\gamma)(r\alpha-t)]}dr = \sigma^2(0,x_0)\lambda(t,2(\kappa+\gamma)).
	\end{equation*}
	This implies, for all $p \geq 2$,  \begin{equation}\label{ptm1}\E\big[\|D\tilde{Y}_t\|^{-p}_{\mathcal{H}}\big] = \sigma^{-p}(0,x_0)\left(\lambda(t,2(\kappa+\gamma))\right)^{-p/2}.
		\end{equation}  
Applying the Malliavin derivative to \eqref{pt}, we have
	\begin{equation}
	\label{eqn:3.6}
	D_\theta D_r\tilde{Y}_t=0.
	\end{equation}
	
Combining \eqref{eqn:3.4},  \eqref{ptm1} and \eqref{eqn:3.6}, we obtain
	\begin{equation*}
	d_{TV}(\tilde{Y}^{\alpha}_t,\tilde{Y}_t) \leq C\left(\lambda(t,2(\kappa+\gamma))\right)^{-1/2}\alpha^{-1/2},
	\end{equation*}
	where $C$ is a constant depending only on $\{x_0,y_0,\kappa,\gamma,K,L,T, \sigma(0,x_0)\}$. This completes our proof. 
\end{proof}
\subsubsection{The velocity process}
As mentioned in the introduction, when $g(t,x)=g(x)$ and $\sigma(t,x)=\delta$, \cite[Theorem 2.3]{narita3} shows that the velocity process $Y^\alpha_t$ converges to the normal distribution as $\alpha \to \infty$.  In the rest of this section, we generalize this result to a much more general setting where $g$ depends on both $x$ and $t$ while $\sigma$ depends only on $t$, i.e. $\sigma(t,x)=\sigma(t)$.

From \eqref{eqmm1} we get
	\begin{equation}
	\label{eqn:2.10mm}
	Y^{\alpha}_t = y_0\exp{[-\alpha(\kappa+\gamma)t]}-\alpha(\kappa+\gamma)I_1^{\alpha}(t)+\alpha(\kappa+\gamma)I_2^{\alpha}(t)+\alpha(\kappa+\gamma)I_3^{\alpha}(t).
	\end{equation}
Since $X^{\alpha}_t$ is Malliavin differentiable, $Y^{\alpha}_t$ is also Malliavin  differentiable satisfying $D_rY^{\alpha}_t = 0$ for $r \geq t$,  and for $0\leq r < t\leq T$
	\begin{equation}
	\label{eqn:2.11}
	D_rY^{\alpha}_t = \alpha\sigma(r)\exp{[\alpha(\kappa+\gamma)(r-t)]}-\alpha\int_{r}^{t}\exp{[\alpha(\kappa+\gamma)(t-s)]}g'(s,X^{\alpha}_s)D_{r}X^{\alpha}_s\, ds. 
	\end{equation}
Define 
$$
W^{\alpha}(t) = \sqrt{\alpha}(\kappa+\gamma)I_3^{\alpha}(t)=\sqrt{\alpha}\int_{0}^{t} \exp{[\alpha(\kappa+\gamma)(u-t)]}\sigma(u) dW_u.
$$ 
Then $W^{\alpha}(t)$ is also Malliavin differentiable and $D_rW^{\alpha}_t = 0$ for $r \geq t$ and for $0\leq r < t\leq T$
	\begin{equation}
	\label{eqn:2.12}
	D_rW^{\alpha}(t) = \sqrt{\alpha}\sigma(r)\exp{[\alpha(\kappa+\gamma)(r-t)]}.
	\end{equation}
	\begin{lemma}
		\label{bd:2.9}
		Let $Y^\alpha_t$ be the solution of \eqref{eqn:2.10} with Assumptions \ref{gt:2.1}. Then, for all $p\geq 2$ and  $t\in (0,T],$
		\begin{equation*}
			\E\left[\Big|\dfrac{Y^{\alpha}_t}{\sqrt{\alpha}}-W^{\alpha}(t)\Big|^p\right] \leq C\alpha^{-p/2},
		\end{equation*}
		where $C$ is constant depending only on $\{y_0,\kappa,\gamma,K,L,p\}$.
	\end{lemma}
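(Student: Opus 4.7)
The plan is to extract the explicit formula for $Y^{\alpha}_t/\sqrt{\alpha}-W^{\alpha}(t)$ by combining \eqref{eqn:2.10mm} with the definition of $W^{\alpha}(t)$, and then bound each resulting term separately using the estimates already developed in Section \ref{sec: displacement process}. Since $W^{\alpha}(t)=\sqrt{\alpha}(\kappa+\gamma) I_{3}^{\alpha}(t)$, we have $\alpha(\kappa+\gamma)I_{3}^{\alpha}(t)=\sqrt{\alpha}W^{\alpha}(t)$. Dividing \eqref{eqn:2.10mm} by $\sqrt{\alpha}$ and subtracting $W^{\alpha}(t)$ gives the clean identity
\begin{equation*}
\dfrac{Y^{\alpha}_t}{\sqrt{\alpha}}-W^{\alpha}(t)=\dfrac{y_0}{\sqrt{\alpha}}\exp[-\alpha(\kappa+\gamma)t]-\sqrt{\alpha}(\kappa+\gamma)\,I_{1}^{\alpha}(t)+\sqrt{\alpha}(\kappa+\gamma)\,I_{2}^{\alpha}(t),
\end{equation*}
so the whole problem reduces to bounding three explicit quantities. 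Using Minkowski's inequality \eqref{eq: Minkowski inequality}, it suffices to control the $p$-th moment of each summand by $C\alpha^{-p/2}$.

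The first term is trivial: $|y_0 \exp[-\alpha(\kappa+\gamma)t]/\sqrt{\alpha}|^p\leq |y_0|^p/\alpha^{p/2}$. For the second term I would apply the linear growth bound from Assumption \ref{gt:2.1} to get
\begin{equation*}
|I_{1}^{\alpha}(t)|\leq \dfrac{K}{\kappa+\gamma}\int_{0}^{t}\exp[\alpha(\kappa+\gamma)(u-t)]\bigl(1+|X^{\alpha}_u|\bigr)\,du,
\end{equation*}
and then apply H\"older's inequality with respect to the finite measure $\exp[\alpha(\kappa+\gamma)(u-t)]\,du$ of total mass $\lambda(t,\alpha(\kappa+\gamma))$, invoking the uniform moment bound from Lemma \ref{bd:2.4} for $\sup_u\E|X^{\alpha}_u|^p$. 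This yields $\E|I_{1}^{\alpha}(t)|^p\leq C\,\lambda(t,\alpha(\kappa+\gamma))^p$, and since $\lambda(t,a)\leq 1/a$ we get $\alpha^{p/2}(\kappa+\gamma)^p\E|I_{1}^{\alpha}(t)|^p\leq C\alpha^{-p/2}$, as required.

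For the third term I would first observe that $n^{\alpha}(u)=\E[Y^{\alpha}_u]$ is uniformly bounded in $\alpha$. This follows from \eqref{eqm1m}: the linear growth of $g$ and Lemma \ref{bd:2.4} give $|G^{\alpha}(u)|\leq C$, hence
\begin{equation*}
|n^{\alpha}(u)|\leq |y_0|+C\cdot\alpha\kappa\int_0^u\exp[\alpha\kappa(s-u)]\,ds=|y_0|+C\bigl(1-e^{-\alpha\kappa u}\bigr)\leq C.
\end{equation*}
Then $|I_{2}^{\alpha}(t)|\leq \frac{\gamma C}{\kappa+\gamma}\lambda(t,\alpha(\kappa+\gamma))\leq C/\alpha$, giving $|\sqrt{\alpha}(\kappa+\gamma)I_{2}^{\alpha}(t)|^p\leq C/\alpha^{p/2}$. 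Combining the three estimates and taking expectations closes the proof.

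There is no genuine obstacle here: the key conceptual step is simply identifying the correct algebraic rearrangement of \eqref{eqn:2.10mm} that isolates the three deterministic-in-structure error terms $I_0^{\alpha}$, $I_1^{\alpha}$, $I_2^{\alpha}$ with the prefactor $\sqrt{\alpha}(\kappa+\gamma)$ rather than $\alpha(\kappa+\gamma)$, after which the gain of $\alpha^{-1}$ coming from $\lambda(t,\alpha(\kappa+\gamma))\leq 1/(\alpha(\kappa+\gamma))$ in each integral exactly compensates the extra factor $\sqrt{\alpha}$.
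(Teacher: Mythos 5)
Your proposal is correct and follows essentially the same route as the paper: the identical algebraic identity $\frac{Y^{\alpha}_t}{\sqrt{\alpha}}-W^{\alpha}(t)=\frac{y_0}{\sqrt{\alpha}}e^{-\alpha(\kappa+\gamma)t}-\sqrt{\alpha}(\kappa+\gamma)I_1^{\alpha}(t)+\sqrt{\alpha}(\kappa+\gamma)I_2^{\alpha}(t)$, followed by term-by-term moment bounds via $\lambda(t,a)\leq 1/a$ and Lemma \ref{bd:2.4}. The only (harmless) deviation is that you bound $I_2^{\alpha}$ through a uniform bound on $n^{\alpha}$ from \eqref{eqm1m} rather than invoking the rewriting \eqref{ptme} and estimate \eqref{eq2m} as the paper does; both give $|I_2^{\alpha}(t)|\leq C/\alpha$.
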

	\begin{proof}
From \eqref{eqn:2.10mm}, we get $$\dfrac{Y^{\alpha}_t}{\sqrt{\alpha}}-W^{\alpha}(t)=\dfrac{y_0}{\sqrt{\alpha}}\exp{[-\alpha(\kappa+\gamma)t]}-\sqrt{\alpha}(\kappa+\gamma)I_1^{\alpha}(t)+\sqrt{\alpha}(\kappa+\gamma)I_2^{\alpha}(t).$$ 
Then, by \eqref{eq1m}, \eqref{eq2m} and Lemma \ref{bd:2.6},  we have the following estimation
		\begin{equation*}
			\begin{split}
				\E\left[\Big|\dfrac{Y^{\alpha}_t}{\sqrt{\alpha}}-W^{\alpha}(t)\Big|^p\right] 
				&\leq 3^{p-1}\left[\dfrac{|y_0|^p}{\alpha^{p/2}}+\alpha^{p/2}(\kappa+\gamma)^p\E\big(|I_1^{\alpha}(t)|^p\big)+\alpha^{p/2}(\kappa+\gamma)^p|I_2^{\alpha}(t)|^p\right]\\
				&\leq C\left[\dfrac{1}{\alpha^{p/2}}+\alpha^{p/2}\left(2+\E\Big(\sup_{0\leq t\leq T}|X^{\alpha}_t|^{p}\Big)\right)(\lambda(t;(\kappa+\gamma)\alpha))^{p}+\alpha^{p/2}\left( \lambda(t,\alpha\kappa)\right)^p\right]\\
				&\leq \dfrac{C}{\alpha^{p/2}}\left[1+\E\Big(\sup_{0\leq t\leq T}|X^{\alpha}_t|^{p}\Big)\right]
				\\&\leq \dfrac{C}{\alpha^{p/2}},
			\end{split}
		\end{equation*}
		where $C$ is a constant depending only on $\{y_0,\kappa,\gamma,K, L\}$. This completes the proof of the lemma. 
	\end{proof}
	\begin{lemma}
		\label{bd:2.10}
		Let $Y^\alpha_t$ be the solution of \eqref{eqn:2.10} with Assumptions \ref{gt:2.1}. Assume that $\sigma(t)$ is continuous on $[0,T]$. Then, for all $\alpha\geq 1$,
		\begin{equation*}
		\E\int_{0}^{t}\left|\dfrac{D_rY^{\alpha}_t}{\sqrt{\alpha}}-D_rW^{\alpha}(t)\right|^pdr\leq C\|\sigma\|^p_\infty \alpha^{-p/2},
		\end{equation*}
		where $\|\sigma\|_\infty =\sup\limits_{t\in[0,T]}|\sigma(t)|$ and $C$ is constant depending only on $\{\kappa,\gamma,K,L,p,T\}$.
	\end{lemma}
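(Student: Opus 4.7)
The plan is to compute the difference $\frac{D_rY^\alpha_t}{\sqrt\alpha}-D_rW^\alpha(t)$ explicitly from \eqref{eqn:2.11} and \eqref{eqn:2.12}, observe that the leading deterministic (Ornstein--Uhlenbeck) terms cancel, and then control what remains by Hölder's inequality together with a uniform-in-$\alpha$ moment bound on the Malliavin derivative $D_rX^\alpha_s$.

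First I would write
\begin{equation*}
\frac{D_rY^\alpha_t}{\sqrt\alpha}-D_rW^\alpha(t)
=-\sqrt\alpha\int_r^t\exp[\alpha(\kappa+\gamma)(s-t)]\,g'(s,X^\alpha_s)\,D_rX^\alpha_s\,ds,
\end{equation*}
since the terms $\sqrt\alpha\,\sigma(r)\exp[\alpha(\kappa+\gamma)(r-t)]$ match exactly (note that in this subsection $\sigma(t,x)=\sigma(t)$ so the $\sigma$-contribution in \eqref{eqn:2.11} has no stochastic integral counterpart). Using $|g'|\le L$ from Assumption \ref{gt:2.1} and Hölder's inequality with respect to the finite measure $d\mu(s)=\exp[\alpha(\kappa+\gamma)(s-t)]\,ds$ on $[r,t]$, whose total mass equals $\lambda(t-r,\alpha(\kappa+\gamma))\le\frac{1}{\alpha(\kappa+\gamma)}$, I get
\begin{equation*}
\Bigl|\tfrac{D_rY^\alpha_t}{\sqrt\alpha}-D_rW^\alpha(t)\Bigr|^p
\le \alpha^{p/2}L^p\Bigl(\tfrac{1}{\alpha(\kappa+\gamma)}\Bigr)^{p-1}\int_r^t\exp[\alpha(\kappa+\gamma)(s-t)]\,|D_rX^\alpha_s|^p\,ds.
\end{equation*}

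Next I would invoke a uniform-in-$\alpha$ moment bound of the form $\sup_{0\le r\le s\le T}\E|D_rX^\alpha_s|^p\le C\|\sigma\|_\infty^p$. This is the analogue of Lemma \ref{bd:2.6} but for $X^\alpha$ rather than $X$; it follows from exactly the same Gronwall argument applied to the integral equation \eqref{eqn:2.4}, using that the exponential weights $\exp[\alpha(\kappa+\gamma)(r-t)]$ and $\exp[\alpha(\kappa+\gamma)(s-t)]$ are bounded by $1$, that $|\bar g^\alpha|,|\bar\sigma^\alpha|\le L$, and that in the present setting $|\sigma(r,X^\alpha_r)|=|\sigma(r)|\le\|\sigma\|_\infty$. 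Taking expectations and performing the remaining $ds$-integral (which contributes another factor $(\alpha(\kappa+\gamma))^{-1}$) yields
\begin{equation*}
\E\Bigl|\tfrac{D_rY^\alpha_t}{\sqrt\alpha}-D_rW^\alpha(t)\Bigr|^p
\le C\|\sigma\|_\infty^p\,\alpha^{p/2-(p-1)-1}
= C\|\sigma\|_\infty^p\,\alpha^{-p/2},
\end{equation*}
uniformly in $r\in[0,t]$. Integrating over $r\in[0,t]\subset[0,T]$ gives the claimed estimate.

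The only genuine obstacle is the uniform-in-$\alpha$ moment bound on $D_rX^\alpha_s$: equation \eqref{eqn:2.4} contains two $\alpha$-dependent integrals with exponential kernels, and one must check that when Gronwall is applied the constants remain independent of $\alpha$. The key point is that the kernel $\exp[\alpha(\kappa+\gamma)(s-t)]$ is bounded by $1$ and its integral over $[r,t]$ is bounded by $1/(\alpha(\kappa+\gamma))$, so all $\alpha$-dependent prefactors either cancel or remain bounded; the resulting constant depends only on $\kappa,\gamma,L,T$ and $\|\sigma\|_\infty$. Everything else --- the cancellation, the Hölder step, the final $r$-integration --- is routine.
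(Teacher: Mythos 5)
Your proposal is correct and follows essentially the same route as the paper: the same cancellation of the $\sqrt{\alpha}\,\sigma(r)\exp[\alpha(\kappa+\gamma)(r-t)]$ terms, followed by bounding the remaining integral via $\int_r^t\exp[\alpha(\kappa+\gamma)(s-t)]\,ds\leq(\alpha(\kappa+\gamma))^{-1}$ to gain the factor $\alpha^{-p}$ against the prefactor $\alpha^{p/2}$. The only (harmless) difference is in how $D_rX^\alpha_s$ is controlled: the paper exploits that $\bar{\sigma}^\alpha\equiv 0$ here to run a pathwise Gronwall argument giving the almost-sure bound $|D_rX^\alpha_s|\leq C\|\sigma\|_\infty$, whereas you use H\"older's inequality plus a uniform-in-$\alpha$ $L^p$ moment bound on $D_rX^\alpha_s$, which you correctly identify as the needed analogue of Lemma \ref{bd:2.6} and justify adequately.
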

	
	\begin{proof}
From \eqref{eqn:2.11} and \eqref{eqn:2.12} we have
		\begin{align}\label{eq3m}
				\E\int_{0}^{t}\left|\dfrac{D_rY^{\alpha}_t}{\sqrt{\alpha}}-D_rW^{\alpha}(t)\right|^pdr
				&= \E\int_{0}^{t}\left|\sqrt{\alpha}\int_{r}^{t}\exp{[\alpha(\kappa+\gamma)(t-s)]}g'(s,X^{\alpha}_s)D_{r}X^{\alpha}_sds\right|^p\,dr\notag\\
				&\leq C\alpha^{p/2}\int_{0}^{t}\E\left|\int_{r}^{t}\exp{[\alpha(\kappa+\gamma)(t-s)]}|D_{r}X^{\alpha}_s|ds\right|^pdr.
		\end{align}
On the other hand,	from 	\eqref{eqn:2.4}, together with Assumptions \ref{gt:2.1} and the fact that $\bar{\sigma}^\alpha(s)=0,$  we have
	\begin{align*}	|D_rX^{\alpha}_t| &\leq \dfrac{|\sigma(r)|}{\kappa+\gamma}(1-\exp{[\alpha(\kappa+\gamma)(r-t)]})+\dfrac{1}{\kappa+\gamma}\int_r^t|\bar{g}^\alpha(s)| |D_rX_s^\alpha| ds\notag\\
		&\qquad+\dfrac{1}{\kappa+\gamma}\int_{r}^{t}\exp{[\alpha(\kappa+\gamma)(s-t)]}|\bar{b}^\alpha(s)||D_rX^{\alpha}_s| ds\\&
		\leq \dfrac{\|\sigma\|_\infty}{\kappa+\gamma}+\dfrac{2L}{\kappa+\gamma}\int_r^t |D_rX_s^\alpha|\, ds.
		\end{align*}
		Thus, by Gronwall's inequality one sees that 
		\begin{equation*}
			|D_rX^{\alpha}_t|\leq  \dfrac{\|\sigma\|_\infty}{\kappa+\gamma} e^{\frac{2L(t-r)}{\kappa+\gamma}}\leq C\|\sigma\|_\infty, 
		\end{equation*}
		where $C$ is constant depending only on $\{\kappa,\gamma,K,L,T\}$.  Substituting this into \eqref{eq3m} yields
			\begin{align*}		\E\int_{0}^{t}\left|\dfrac{D_rY^{\alpha}_t}{\sqrt{\alpha}}-D_rW^{\alpha}(t)\right|^pdr
		&\leq C\|\sigma\|^p_\infty \alpha^{p/2}\int_{0}^{t}\left|\int_{r}^{t}\exp{[\alpha(\kappa+\gamma)(t-s)]}ds\right|^pdr\\&\leq C\|\sigma\|^p_\infty \alpha^{p/2}\int_{0}^{t}\lambda^p(t-r;\alpha(\kappa+\gamma))dr
	\\&\leq C\|\sigma\|^p_\infty\alpha^{-p/2},
		\end{align*}
		which is the desired conclusion.
	\end{proof}
	
	Now we are ready to prove the rate of convergence in total variation distance for the velocity process $Y^\alpha_t$ as $\alpha \to \infty$.	
	\begin{theorem}
	\label{thm: TV for velocity2}
	Let $Y^\alpha_t$ be the solution of \eqref{eqn:2.10} with   Assumptions \ref{gt:2.1}. Assume that $\sigma(t)$ is a continuously differentiable function  on $[0,T]$ and that $\sigma(t)\not=0$ for each $t \in (0, T]$. Then, for each $\alpha\geq 1$ and $t\in (0,T]$ 
		\begin{equation*}
			d_{TV}\left(\dfrac{Y^{\alpha}_t}{\sqrt{\alpha}},N\right) \leq C\left(\lambda(t,2(\kappa+\gamma))\right)^{-1/2}\alpha^{-1/2},
		\end{equation*}
		where $N$ is a normal random variable with mean $0$ and variance $\dfrac{
		\sigma^2(t)}{2(\kappa+\gamma)}$, and $C$ is a constant depending only on $\{x_0,y_0,\kappa,\gamma,K,L,T, \sigma\}$.
	\end{theorem}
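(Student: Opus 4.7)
I would use the centered Gaussian intermediary
$$W^{\alpha}(t)=\sqrt{\alpha}\int_{0}^{t}\exp[\alpha(\kappa+\gamma)(u-t)]\sigma(u)\,dW_u$$
already introduced before Lemma~\ref{bd:2.9} and split by the triangle inequality
$$d_{TV}\Big(\tfrac{Y^{\alpha}_t}{\sqrt{\alpha}},N\Big)\le d_{TV}\Big(\tfrac{Y^{\alpha}_t}{\sqrt{\alpha}},W^{\alpha}(t)\Big)+d_{TV}(W^{\alpha}(t),N).$$
The first term will be handled by Malliavin calculus, closely mirroring the proof of Theorem~\ref{thm: TV for velocity} for the rescaled velocity; the second is a direct comparison between two one-dimensional centered Gaussians.

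\textbf{First term.} I would apply Lemma~\ref{bd:2.1} with $F_1=W^{\alpha}(t)$ and $F_2=Y^{\alpha}_t/\sqrt{\alpha}$. The crucial observation is that because $\sigma$ is deterministic, the Malliavin derivative $D_rW^{\alpha}(t)=\sqrt{\alpha}\sigma(r)\exp[\alpha(\kappa+\gamma)(r-t)]\mathbf{1}_{\{r<t\}}$ is itself a deterministic function of $r$, so $D^{2}W^{\alpha}(t)\equiv 0$ and $\|DW^{\alpha}(t)\|_{\mathcal{H}}^{2}=v_{\alpha}(t):=\alpha\int_{0}^{t}\sigma(r)^{2}e^{2\alpha(\kappa+\gamma)(r-t)}\,dr$ is deterministic. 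Lemma~\ref{bd:2.1} therefore collapses to
$$d_{TV}(Y^{\alpha}_t/\sqrt{\alpha},W^{\alpha}(t))\le 2\,\|Y^{\alpha}_t/\sqrt{\alpha}-W^{\alpha}(t)\|_{1,2}\,v_{\alpha}(t)^{-1/2}.$$
Lemmas~\ref{bd:2.9} and~\ref{bd:2.10} immediately give $\|Y^{\alpha}_t/\sqrt{\alpha}-W^{\alpha}(t)\|_{1,2}\le C\alpha^{-1/2}$; combined with the lower bound $v_{\alpha}(t)\ge c\,\lambda(t,2(\kappa+\gamma))$ discussed below, this yields a contribution of the desired form $C\lambda(t,2(\kappa+\gamma))^{-1/2}\alpha^{-1/2}$.

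\textbf{Second term.} Both $W^{\alpha}(t)$ and $N$ are centered one-dimensional Gaussians, with variances $v_{\alpha}(t)$ and $v(t)=\sigma^2(t)/(2(\kappa+\gamma))$ respectively. Integrating by parts in $r$ against the exponential weight and using the boundedness of $(\sigma^2)'$ on $[0,T]$ (the only place where the $C^1$ hypothesis on $\sigma$ is needed) gives
$$v_{\alpha}(t)-v(t)=-\frac{\sigma^2(0)\,e^{-2\alpha(\kappa+\gamma)t}}{2(\kappa+\gamma)}-\frac{1}{2(\kappa+\gamma)}\int_{0}^{t}(\sigma^2)'(r)\,e^{2\alpha(\kappa+\gamma)(r-t)}\,dr,$$
whose absolute value is $O(\alpha^{-1})$. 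Together with the classical estimate $d_{TV}(N(0,s_1),N(0,s_2))\le C|s_1-s_2|/\min(s_1,s_2)$ and the lower bound on $\min(v_{\alpha}(t),v(t))$, this second contribution is $O(\alpha^{-1})$ and is therefore dominated by (hence absorbed into) the estimate of the first term.

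\textbf{Main obstacle.} The principal technical difficulty is establishing the uniform lower bound $v_{\alpha}(t)\ge c\,\lambda(t,2(\kappa+\gamma))$ valid for every $\alpha\ge 1$ and $t\in(0,T]$, since the hypothesis only demands $\sigma(t)\ne 0$ on $(0,T]$ and permits $\sigma(0)=0$, thereby ruling out any global infimum of $|\sigma|$ on $[0,T]$. The key observation is that the pure-exponential weight $\alpha\int_0^te^{2\alpha(\kappa+\gamma)(r-t)}\,dr=(1-e^{-2\alpha(\kappa+\gamma)t})/(2(\kappa+\gamma))$ is monotone increasing in $\alpha$ and hence dominates $\lambda(t,2(\kappa+\gamma))$ whenever $\alpha\ge 1$; one then localizes near $r=t$ and uses the continuity and non-vanishing of $\sigma$ at $t$ to trap $\sigma^2(r)$ from below by a multiple of $\sigma^2(t)$ on a neighbourhood of $t$, ultimately extracting a constant that depends on $\sigma$ and $T$ but not on $t$ or $\alpha$.
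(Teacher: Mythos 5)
Your decomposition is exactly the paper's: the same intermediary $W^{\alpha}(t)=\sqrt{\alpha}(\kappa+\gamma)I_3^{\alpha}(t)$, the same application of Lemma~\ref{bd:2.1} with $F_1=W^{\alpha}(t)$ (exploiting $D^2W^{\alpha}(t)=0$ and the deterministic $\|DW^{\alpha}(t)\|_{\mathcal H}^2$), the same input from Lemmas~\ref{bd:2.9} and~\ref{bd:2.10} to get $\|Y^{\alpha}_t/\sqrt{\alpha}-W^{\alpha}(t)\|_{1,2}\le C\alpha^{-1/2}$, and the same Gaussian-versus-Gaussian comparison for $d_{TV}(W^{\alpha}(t),N)$ via integration by parts in the variance (the paper invokes Lemma 4.9 of \cite{klartagdtvnorm} where you invoke the classical variance-ratio bound). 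Up to and including the observation that $\alpha\lambda(t,2\alpha(\kappa+\gamma))$ is increasing in $\alpha$ and hence dominates $\lambda(t,2(\kappa+\gamma))$ for $\alpha\ge1$, your argument and the paper's coincide.

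The one place you genuinely depart is your ``main obstacle'': the paper simply bounds $\sigma^2(r)\ge\min_{s\in[0,T]}|\sigma(s)|^2$, tacitly treating $\sigma$ as bounded away from zero on the closed interval, whereas you try to accommodate $\sigma(0)=0$ by localizing near $r=t$. That fix does not work: the claimed bound $v_{\alpha}(t)\ge c\,\lambda(t,2(\kappa+\gamma))$ with $c$ independent of $t$ is false when $\sigma$ vanishes at $0$. Take $\sigma(r)=r$; then
\begin{equation*}
v_{\alpha}(t)=\alpha\int_0^t r^2 e^{2\alpha(\kappa+\gamma)(r-t)}\,dr\le t^2\,\alpha\int_0^t e^{2\alpha(\kappa+\gamma)(r-t)}\,dr\le\frac{t^2}{2(\kappa+\gamma)},
\end{equation*}
while $\lambda(t,2(\kappa+\gamma))\sim t$ as $t\to0^+$, so the ratio $v_{\alpha}(t)/\lambda(t,2(\kappa+\gamma))$ tends to $0$ and no uniform $c>0$ exists. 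The localization idea fails precisely because the neighbourhood of $t$ on which $\sigma^2(r)\ge\tfrac12\sigma^2(t)$ holds, and the value $\sigma^2(t)$ itself, both degenerate as $t\to0$. So either you must keep the constant $t$-dependent (which changes the form of the final estimate), or you must assume $\min_{[0,T]}|\sigma|>0$ as the paper's computation implicitly does. Everything else in your proposal is sound and matches the paper.
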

	\begin{proof}
		Using Lemma \ref{bd:2.1},  we have:
\begin{align*}
d_{TV}\left(\dfrac{Y^{\alpha}_t}{\sqrt{\alpha}},W^{\alpha}(t)\right) \leq
			\left\|\dfrac{Y^{\alpha}_t}{\sqrt{\alpha}}-W^{\alpha}(t)\right\|_{1,2}
		&\Bigg[3\left(\E\|D^2W^{\alpha}(t)\|^4_{\mathcal{H}\bigotimes\mathcal{H}} \right)^{1/4}\left( \E\|DW^{\alpha}(t)\|_\mathcal{H}^{-8}\right)^{1/4}\\& \qquad \qquad + 2\left(\E\|DW^{\alpha}(t)\|_\mathcal{H}^{-2}\right)^{1/2}\Bigg].
		\end{align*}
		By  Lemmas  \ref{bd:2.9} and \ref{bd:2.10}, one can derive that
		\begin{align}
			\label{eqn:3.7}
		d_{TV}\left(\dfrac{Y^{\alpha}_t}{\sqrt{\alpha}},W^{\alpha}(t)\right)  \leq C\alpha^{-1/2}&\Bigg[3\left(\E\|D^2W^{\alpha}(t)\|^4_{\mathcal{H}\bigotimes\mathcal{H}} \right)^{1/4}\left( \E\|DW^{\alpha}(t)\|_\mathcal{H}^{-8}\right)^{1/4}\notag\\& \ \ \qquad  + 2\left(\E\|DW^{\alpha}(t)\|_\mathcal{H}^{-2}\right)^{1/2}\Bigg],
		\end{align}
		where $C$ is a constant depending only on $\{x_0,y_0,\kappa,\gamma,K,L,T\}$.
		
		Now we calculate the derivatives of $W^{\alpha}(t)$. One can easily show that \begin{equation}\label{eqn:3.8}D_rW^{\alpha}(t) = \sqrt{\alpha}\sigma(r)\exp{[\alpha(\kappa+\gamma)(r-t)]} \ \mbox{and} \ \  
			D_\theta D_r W^{\alpha}(t) =0.
			\end{equation}
Therefore,
			\begin{align*}	
		\|DW^{\alpha}(t)\|_\mathcal{H}^2 &= \int_{0}^{t}\sigma^2(r)\alpha\exp{[2\alpha(\kappa+\gamma)(r-t)]}dr\\& \geq \alpha \lambda(t;2\alpha(\kappa+\gamma))\min\limits_{t\in [0,T]}|\sigma(t)|^2
		\\&\geq \lambda(t;2(\kappa+\gamma))\min\limits_{t\in [0,T]}|\sigma(t)|^2.
		\end{align*}
		Thus, for all $p \geq 2$,   
		\begin{equation}
			\label{eqn:3.9}
			\E\big[\|D\tilde{Y}_t\|^{-p}_{\mathcal{H}}\big]\leq \dfrac{1}{\lambda^{p/2}(t;2(\kappa+\gamma))\min\limits_{t\in [0,T]}|\sigma(t)|^p}.
		\end{equation}
		
		From \eqref{eqn:3.7}, \eqref{eqn:3.8}, \eqref{eqn:3.9}, we obtain
		\begin{equation*}
			d_{TV}\left(\dfrac{Y^{\alpha}_t}{\sqrt{\alpha}},W^{\alpha}(t)\right)\leq C\alpha^{-1/2}\left(\dfrac{2}{(\lambda(t;2(\kappa+\gamma)))^{1/2}\min\limits_{t\in [0,T]}|\sigma(t)|}\right),
		\end{equation*}
		where $C$ is a constant depending only on $\{x_0,y_0,\kappa,\gamma,K,L,T, \sigma\}$.
	 Thus,
		\begin{equation*}
			d_{TV}\left(\dfrac{Y^{\alpha}_t}{\sqrt{\alpha}},W^{\alpha}(t)\right) \leq C\left(\lambda(t,2(\kappa+\gamma))\right)^{-1/2}\alpha^{-1/2},
		\end{equation*}
	where $C$ is a constant depending only on $\{x_0,y_0,\kappa,\gamma,K,L,T, \sigma\}$.
	
Note that by It\^{o}'s isometry and using integration by parts for the non-stochastic integral, we have
	\begin{align*}
	\E\big[\left[W^{\alpha}(t)\right]^2\big]& = \int_{0}^{t}\alpha\sigma^2(r)\exp{[2\alpha(\kappa+\gamma)(r-t)]}dr\\&
	=\dfrac{\sigma^2(t)}{2(\kappa+\gamma)}-\dfrac{\sigma^2(0)\exp{[-2\alpha(\kappa+\gamma)t]}}{2(\kappa+\gamma)}-\int_{0}^{t}\sigma(r)\sigma'(r)\exp{[2\alpha(\kappa+\gamma)(r-t)]}dr.
	\end{align*}
	Thus, we can deduce that $W^{\alpha}(t)$ is random variable with normal distribution with mean $0$ and variance $$\dfrac{\sigma^2(t)}{2(\kappa+\gamma)}-\dfrac{\sigma^2(0)\exp{[-2\alpha(\kappa+\gamma)t]}}{2(\kappa+\gamma)}-\int_{0}^{t}\sigma(r)\sigma'(r)\exp{[2\alpha(\kappa+\gamma)(r-t)]}dr.$$
	
	Now, applying Lemma 4.9, \cite{klartagdtvnorm},	 we derive that
		\begin{align*}
			d_{TV}(W^{\alpha}(t),N)& \leq C\left(\dfrac{\sigma^2(0)\exp{[-2\alpha(\kappa+\gamma)t]}}{2(\kappa+\gamma)}+\int_{0}^{t}|\sigma(r)\sigma'(r)|\exp{[2\alpha(\kappa+\gamma)(r-t)]}dr\right)\\& 
			\leq C\left(\dfrac{\sigma^2(0)\exp{[-2\alpha(\kappa+\gamma)t]}}{2(\kappa+\gamma)}+\|\sigma\|_\infty \|\sigma'\|_\infty\lambda(t,2\alpha(\kappa+\gamma))\right)
			\\&\leq C\|\sigma\|_\infty \|\sigma'\|_\infty\alpha^{-1},
\end{align*}
		where $\|\sigma\|_\infty =\sup\limits_{t\in[0,T]}|\sigma(t)|$, $\|\sigma'\|_\infty =\sup\limits_{t\in[0,T]}|\sigma'(t)|$ and $C$ is an universal constant. Thus, 
		\begin{equation*}
			d_{TV}\left(\dfrac{Y^{\alpha}_t}{\sqrt{\alpha}},N\right) \leq d_{TV}(W^{\alpha}(t),N)+ d_{TV}\left(\dfrac{Y^{\alpha}_t}{\sqrt{\alpha}},W^{\alpha}(t)\right)\leq C\left(\lambda(t,2(\kappa+\gamma))\right)^{-1/2}\alpha^{-1/2},
		\end{equation*}
		where  $C$ is a constant depending on $\{x_0,y_0,\kappa,\gamma,K,L,T, \sigma\}$. This completes the proof. 
	\end{proof}
 
\section*{Acknowledgment} Research of MHD was supported by EPSRC Grants EP/W008041/1 and  EP/V038516/1.
\bibliographystyle{alpha}
\bibliography{refs}
\end{document}